\DeclareMathOperator*{\diag}{\bigtriangleup}
\newcommand{\kla}[1]{ {\langle #1 \rangle} }
\newcommand{\st}{\;|\;}
\newcommand{\dom}{ {\rm dom} }
\newcommand{\ran}{ {\rm ran} }
\newcommand{\sub}{\subseteq}
\newfont{\ssi}{cmssi12 at 12pt}
\newcommand{\rest}{{\restriction}}
\newcommand{\cf}{ {\rm cf} }
\newcommand{\On}{ {\rm On} }
\newcommand{\verl}{{{}^\frown}}
\newcommand{\leer}{\emptyset}
\newcommand{\ohne}{\setminus}
\newcommand{\id}{ {\rm id} }
\newenvironment{ea*}{\begin{eqnarray*}}{\end{eqnarray*}}
\newcounter{claimnumber}
\newenvironment{NumberedClaim}%
{\begin{enumerate}[label=(\arabic*)]
\refstepcounter{claimnumber}
\setcounter{enumi}{\value{claimnumber}-1}
\item}%
{\end{enumerate}}
\newcommand{\To}{\longrightarrow}
\newcommand{\Lim}{{\rm Lim}}
\newcommand{\power}{{\mathcal{P}}}
\newcommand{\calM}{\mathcal{M}}
\newcommand{\bG}{{\bar{G}}}
\newcommand{\bP}{{\bar{\P}}}
\newcommand{\bS}{{\bar{S}}}
\newcommand{\bT}{{\bar{T}}}
\newcommand{\bgamma}{{\bar{\gamma}}}
\newcommand{\bkappa}{{\bar{\kappa}}}
\newcommand{\blambda}{{\bar{\lambda}}}
\newcommand{\barf}{{\bar{f}}}
\newcommand{\tQ}{{\tilde{Q}}}
\newcommand{\bN}{{\bar{N}}}
\newcommand{\tkappa}{{\tilde{\kappa}}}
\newcommand{\vp}{{\vec{p}}}
\newcommand{\vA}{{\vec{A}}}
\newcommand{\vR}{{\vec{R}}}
\newcommand{\vS}{{\vec{S}}}
\newcommand{\vT}{{\vec{T}}}
\newcommand{\seq}[2]{{\langle#1\;|\;}\linebreak[0]{#2\rangle}}
\renewcommand{\phi}{\varphi}
\newcommand{\card}[1]{\overline{\overline{#1}}}
\newcommand{\ZFC}{\ensuremath{\mathsf{ZFC}}\xspace}
\newcommand{\ZFCm}{\ensuremath{{\ZFC}^-}\xspace}
\newcommand{\V}{\ensuremath{\mathrm{V}}\xspace}
\newcommand{\forces}{\Vdash}
\def\<#1>{\langle#1\rangle}
\newcommand{\B}{{\mathord{\mathbb{B}}}}
\renewcommand{\P}{{\mathord{\mathbb P}}}
\newcommand{\Q}{{\mathord{\mathbb Q}}}
\newcommand{\SC}{\ensuremath{\mathsf{SC}}\xspace}
\newcommand{\MM}{\ensuremath{\mathsf{MM}}\xspace}
\newcommand{\FA}{\ensuremath{\mathsf{FA}}}
\newcommand{\SCFA}{\ensuremath{\mathsf{SCFA}}\xspace}
\newcommand{\BFA}{\ensuremath{\mathsf{BFA}}}
\newcommand{\MP}{\ensuremath{\mathsf{MP}}}
\newcommand{\ColNothing}{\mathrm{Col}}
\newcommand{\Col}[1]{\ColNothing(#1)}
\newcommand{\MPColNothing}[1]{\MP_{\Col{\dot{\kappa}}}}
\newcommand{\CH}{\ensuremath{\mathsf{CH}}\xspace}
\newcommand{\IA}{\mathsf{IA}}
\newcommand{\Refl}{\ensuremath{\mathsf{Refl}}}
\newcommand{\OSR}[1]{\ensuremath{\mathsf{OSR}_{#1}}}
\newcommand{\DSR}[1]{\ensuremath{\mathsf{DSR}{(#1)}}}
\newcommand{\SRP}{\ensuremath{\mathsf{SRP}}\xspace}
\newcommand{\BSCFA}{\ensuremath{\mathsf{BSCFA}}\xspace}
\newcommand{\Todorcevic}{Todor\v{c}evi\'{c}\xspace}
\newtheorem{thm}{Theorem}[section]
\newtheorem*{thm*}{Theorem} 
\newtheorem{cor}[thm]{Corollary}
\newtheorem{lem}[thm]{Lemma}
\newtheorem{obs}[thm]{Observation}
\newtheorem{fact}[thm]{Fact}
\theoremstyle{definition}
\newtheorem{defn}[thm]{Definition}
\newtheorem{question}[thm]{Question}
\newcommand{\thistheoremname}{}
\newtheorem{genericthm}[thm]{\thistheoremname}
\theoremstyle{remark}
\newtheorem{remark}[thm]{Remark}
\newcommand{\SSP}{\ensuremath{\mathsf{SSP}}\xspace}
\newcommand{\infSC}{\ensuremath{\mathsf{\infty\text{-}SC}}\xspace}
\newcommand{\DRP}{\ensuremath{\mathsf{DRP}}\xspace}
\newcommand{\lifting}{\mathsf{lift}}
\renewcommand{\DSR}{\ensuremath{\mathsf{DSR}}}
\newcommand{\eDSR}{\ensuremath{\mathsf{eDSR}}}
\newcommand{\uDSR}{\ensuremath{\mathsf{uDSR}}}
\newcommand{\sDSR}{\ensuremath{\mathsf{sDSR}}}
\newcommand{\projectdown}{\mathbin{\downarrow}}
\newcommand{\projectup}{\mathbin{\uparrow}}
\renewcommand{\card}[1]{|#1|}
\DeclareMathOperator{\Tr}{\mathsf{Tr}}
\DeclareMathOperator{\eTr}{\mathsf{eTr}}
\newcommand{\DSRP}{\ensuremath{\mathsf{DSRP}}\xspace}
\newcommand{\eDSRP}{\ensuremath{\mathsf{eDSRP}}\xspace}
\newcommand{\eRefl}{\ensuremath{\mathsf{eRefl}}\xspace}
\newcommand{\wDRP}{\ensuremath{\mathsf{wDRP}}}
\newcommand{\wDRPIA}{\ensuremath{\wDRP_\IA}}
\newcommand{\infSCFA}{\ensuremath{\infty\text{-}\mathsf{SCFA}}}
\newcommand{\Stationary}{\mathsf{S}}
\newcommand{\vQ}{\vec{Q}}
\begin{document}

\title{The diagonal strong reflection principle and its fragments}
\author{Sean D.~Cox}
\address{Department of Mathematics and Applied Mathematics, Virginia Commonwealth University, 1015 Floyd Avenue, Richmond, Virginia 23284, USA}
\email{scox9@vcu.edu}
\author{Gunter Fuchs}
\thanks{Acknowledgments of the second author: support for this project was provided by PSC-CUNY Award \#63516-00 51, jointly funded by The Professional Staff Congress and The City University of New York, and by Simons Award Number 580600. The second author is also grateful for the hospitality of the logic group at Virginia Commonwealth University, and in particular to Brent Cody, during a visit in August 2019.}
\address{The College of Staten Island (CUNY)\\2800 Victory Blvd.~\\Staten Island, NY 10314, USA}
\address{The Graduate Center (CUNY)\\365 5th Avenue, New York, NY 10016, USA}
\email{gunter.fuchs@csi.cuny.edu}
\urladdr{www.math.csi.cuny.edu/~fuchs}
\date{\today}
\begin{abstract}
A diagonal version of the strong reflection principle is introduced, along with fragments of this principle associated to arbitrary forcing classes. The relationships between the resulting principles and related principles, such as the corresponding forcing axioms and the corresponding fragments of the strong reflection principle are analyzed, and consequences are presented. Some of these consequences are ``exact'' versions of diagonal stationary reflection principles of sets of ordinals. We also separate some of these diagonal strong reflection principles from related axioms.
\end{abstract}
\keywords{Forcing axioms, reflection principles, strong reflection principle, diagonal reflection}
\subjclass[2020]{03E57, 03E50, 03E35, 03E75}
\maketitle

\section{Introduction}

Fuchs \cite{Fuchs:CanonicalFragmentsOfSRP} introduced fragments of \Todorcevic's strong reflection principle \SRP (see \cite[p.~57]{Bekkali:TopicsInST}) for forcing classes $\Gamma$ other than the class \SSP of all stationary set preserving forcings. The focus was on the class of all subcomplete forcings, and the goal was to find a principle that relates to the forcing axiom for $\Gamma$ in much the same way that \SRP relates to \MM, the forcing axiom for \SSP, namely such that
\begin{enumerate}[label=(\arabic*)]
	\item
	\label{item:FollowsFromFA}
	the forcing axiom for $\Gamma$, $\FA(\Gamma)$, implies $\Gamma$-\SRP,
	\item
	\label{item:GeneralizesSRP}
	letting $\SSP$ be the class of all stationary set preserving forcing notions, $\SRP$ is equivalent to $\SSP$-$\SRP$,
	\item
	\label{item:ImpliesMajorConsequences}
	letting $\SC$ be the class of all subcomplete forcing notions, $\SC$-$\SRP$ captures many of the major consequences of $\SCFA$, the forcing axiom for subcomplete forcing.
\end{enumerate}
Subcomplete forcing was introduced by Jensen \cite{Jensen:SPSCF}, \cite{Jensen2014:SubcompleteAndLForcingSingapore}, and shown to be iterable with revised countable support. The main feature of subcomplete forcing that makes it interesting is that subcomplete forcing notions cannot add reals, and as a consequence, \SCFA is compatible with \CH. In fact, Jensen \cite{Jensen:FAandCH} showed that \SCFA is even compatible with $\diamondsuit$, and hence does not imply that the nonstationary ideal on $\omega_1$ is $\omega_2$-saturated. On the other hand, \SCFA does have many of the major consequences of Martin's Maximum, such as the singular cardinal hypothesis. Since \SRP is known to imply that the nonstationary ideal on $\omega_1$ is $\omega_2$-saturated, and that \CH fails, finding a fragment of \SRP for subcomplete forcing was subtle, but in \cite{Fuchs:CanonicalFragmentsOfSRP}, a principle satisfying the two desiderata listed above was found. While the original strong reflection principle can be formulated as postulating that every \emph{projective stationary subset} of $[H_\kappa]^\omega$ contains a continuous $\in$-chain, for regular $\kappa\ge\omega_2$, the subcomplete fragment of \SRP asserts this only for \emph{spread out sets}, and for $\kappa>2^\omega$.

Naturally, there are limitations to the extent to which \ref{item:ImpliesMajorConsequences} can be true. Thus, Larson \cite{Larson:SeparatingSRP} introduced a diagonal version of simultaneous reflection of stationary sets of ordinals, called $\OSR{\omega_2}$, which follows from Martin's Maximum, but not from \SRP. This principle can be generalized to any regular cardinal $\kappa$ greater than $\omega_2$, and it was shown in \cite{Fuchs-LambieHanson:SeparatingDSR} that \SRP does not even imply the weakest versions of these principles, while Fuchs \cite{Fuchs:DiagonalReflection} showed that these principles do follow from \SCFA, as long as $\kappa>2^\omega$. Since $\SC$-$\SRP$ is weaker than $\SRP$, this shows that $\SC$-$\SRP$ does not capture these diagonal reflection principles either, which do follow from \SCFA.

Since these ordinal diagonal reflection principles are underlying the results on the failure of weak square principles under \SCFA shown in \cite{Fuchs:DiagonalReflection}, we push here further in this direction, to find a principle of reflection of generalized stationarity that does capture these consequences of \SCFA/\MM, and that can be relativized to an arbitrary forcing class (resulting in the ``fragments'' of the principle), just like \SRP. We call the resulting principle the \emph{diagonal strong reflection principle,} \DSRP. It unifies both the (relevant fragment) of \SRP and certain diagonal reflection principles the first author introduced in \cite{Cox:DRP}. It also gives rise to some new kinds of exact diagonal reflection principles for sets of ordinals.

For the most part, we will be working with a technical simplification of the notion of subcompleteness, called $\infty$-subcompleteness and introduced in Fuchs-Switzer \cite{Fuchs-Switzer:IterationThmForSubversions}. This leads to a simplification of the adaptation of projective stationarity to the context of this version of subcompleteness. Working with the original notion of subcompleteness adds some technicalities, but does not change much.

The article is organized as follows. In Section \ref{sec:Background}, we will give some background on generalized stationarity, subcomplete forcing and some material from \cite{Fuchs:CanonicalFragmentsOfSRP} on the fragments of \SRP. Then, in Section \ref{sec:SDRP-Gamma}, we will formulate the $\Gamma$-fragment of the diagonal strong reflection principle in full generality, for an arbitrary forcing class $\Gamma$. In the subsequent sections \ref{sec:SSP-DSRP} and \ref{sec:SC-DSRP}, we will treat the cases where $\Gamma$ is the class of all stationary set preserving forcing notions, or the class of all subcomplete forcing notions, respectively, and formulate these principles combinatorially. Here, the notion of a spread out set will make a reappearance, emphasizing its naturalness. Then, in Section \ref{subsec:DSRPconsequences}, we will derive consequences of the principles mentioned above. We divide these consequences in two parts: first, Subsection \ref{subsec:FilteredConsequences} contains consequences that filter through an appropriate version of the diagonal reflection principles of \cite{Cox:DRP}, while Subsection \ref{subsec:Unfiltered} contains some consequences that don't among them some new principles of simultaneous stationary reflection that can be viewed as diagonal reflection principles, enriched with exactness (in a sense to be made explicit).

In Section \ref{sec:DRPlimitations}, we say a few words about limitations of some of the principles under investigation. We separate the diagonal stationary reflection principle from \MM, we show a localized version of this separation for the subcomplete fragment of these principles, and we show that the diagonal reflection principle of \cite{Cox:DRP} does not limit the size of $2^{\omega_1}$.

We close with a few open questions in Section \ref{sec:Questions}.

\section{Some background}
\label{sec:Background}

This section summarizes some definitions and facts we will need. For more detail, we refer to \cite{Fuchs:CanonicalFragmentsOfSRP}. We begin by introducing some notation around generalized stationarity, see see Jech \cite{Jech:StationarySetsHST} for an overview article.

\begin{defn}
\label{defn:ProjectionsAndLiftings}
Let $\kappa$ be a regular cardinal, and let $A\sub\kappa$ be unbounded. Let $\kappa\sub X$. Then
\[\lifting(A,[X]^\omega)=\{x\in[X]^\omega\st\sup(x\cap\kappa)\in A\}\]
is the \emph{lifting} of $A$ to $[X]^\omega$.
Now let $S\sub[X]^\omega$ be stationary. If $W\subseteq X\subseteq Y$, then we define the projections of $S$ to $[Y]^\omega$ and $[W]^\omega$ by
\[S\projectup[Y]^\omega=\{y\in[Y]^\omega\st y\cap X\in S\}\]
and
\[S\projectdown[W]^\omega=\{x\cap W\st x\in S\}.\]
\end{defn}

\begin{defn}
\label{def:Chains}
Let $\kappa$ be a regular uncountable cardinal, and let $S\sub[H_\kappa]^\omega$ be stationary. A \emph{continuous $\in$-chain through $S$ of length $\lambda$} is a sequence $\seq{X_i}{i<\lambda}$ of members of $S$, increasing with respect to $\in$, such that for every limit $j<\lambda$, $X_j=\bigcup_{i<j}X_i$.
\end{defn}

\begin{defn}[{Feng \& Jech \cite{FengJech:ProjectiveStationarityAndSRP}}]
\label{def:ProjectiveStationary}
Let $D$ be a set (usually of the form $H_\kappa$, for some regular uncountable $\kappa$) with $\omega_1\sub D$. Then a set $S\sub[D]^\omega$ with $\bigcup S=D$ is \emph{projective stationary} (in $D$) if for every stationary set $T\sub\omega_1$, the set $\{X\in S\st X\cap\omega_1\in T\}$ is stationary.
\end{defn}

The following is not the original formulation of \SRP due to \Todorcevic, but it was shown by Feng and Jech to be an equivalent way of expressing the principle.

\begin{defn}
\label{def:SRP}
Let $\kappa\ge\omega_2$ be regular. Then the \emph{strong reflection principle at $\kappa$,} denoted $\SRP(\kappa)$, states that whenever $S$ is projective stationary in $H_\kappa$, then there is a continuous 
$\in$-chain of length $\omega_1$ through $S$. The \emph{strong reflection principle} \SRP states that $\SRP(\kappa)$ holds for every regular $\kappa\ge\omega_2$.
\end{defn}

\begin{defn}
\label{def:FA_Gamma}
Let $\Gamma$ be a class of forcing notions. The \emph{forcing axiom for $\Gamma$}, denoted $\FA(\Gamma)$, states that whenever $\P$ is a forcing notion in $\Gamma$ and $\seq{D_i}{i<\omega_1}$ is a sequence of dense subsets of $\P$, there is a filter $F\sub\P$ such that for all $i<\omega_1$, $F\cap D_i\neq\leer$.
\end{defn}

\begin{defn}
\label{def:SSP}
We write $\SSP$ for the class of all forcing notions that preserve stationary subsets of $\omega_1$.
\end{defn}

The principle $\FA(\SSP)$ is known as Martin's Maximum, \MM. The next definition introduces the canonical forcing that can be used to show that Martin's Maximum implies \SRP.

\begin{defn}
\label{def:P_S}
$\P_S$ is the forcing notion consisting of continuous $\in$-chains through $S$ of countable successor length, ordered by end-extension.
\end{defn}

\begin{fact}[Feng \& Jech]
\label{fact:SisProjStatIffP_SisSSP}
Let $\kappa\ge\omega_2$ be an uncountable regular cardinal. Then a stationary set $S\sub[H_\kappa]^\omega$ is projective stationary iff $\P_S\in\SSP$.
\end{fact}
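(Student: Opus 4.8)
The plan is to prove both directions by analyzing generic filters for $\P_S$ and what they add. Throughout, fix a regular uncountable $\kappa \ge \omega_2$ and a stationary $S \sub [H_\kappa]^\omega$. The key observation, to be recorded first, is that a $V$-generic filter $G$ for $\P_S$ yields a continuous $\in$-chain $\vec{X} = \seq{X_i}{i<\omega_1}$ of length $\omega_1$ with $\bigcup_{i<\omega_1} X_i = H_\kappa$ (the latter by a density argument: for each $a \in H_\kappa$, the set of conditions whose top node contains $a$ is dense, using that the members of $S$ cover $H_\kappa$ and $S$ is closed enough under end-extension to continue any chain — this needs that $S$ is stationary, so that above any countable chain one can find a further member of $S$ end-extending its union). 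In particular $\P_S$ collapses $|H_\kappa|$ to $\omega_1$ when $\kappa > \omega_2$, but that does not affect stationary set preservation on $\omega_1$ per se; what matters is the club $C = \{X_i \cap \omega_1 \st i < \omega_1\} \sub \omega_1$ added by $G$, which is a continuous increasing enumeration of a club in $\omega_1^V = \omega_1^{V[G]}$.

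For the direction "$S$ projective stationary $\Rightarrow \P_S \in \SSP$": let $T \sub \omega_1$ be stationary in $V$, let $p \in \P_S$, and let $\dot{E}$ be a $\P_S$-name forced by $p$ to be a club subset of $\omega_1$; I must find $q \le p$ and $\alpha \in T$ with $q \Vdash \check\alpha \in \dot E$. Build, in $V$, an elementary chain $\seq{M_\xi}{\xi < \omega_1}$ of countable elementary submodels of some large $H_\theta$ containing $S, T, p, \dot E, \P_S$, and let $N$ be chosen so that $N \cap \omega_1 \in T$ (possible since $\{M_\xi \cap \omega_1\}$ is club and $T$ is stationary). Now use projective stationarity: the set of $X \in S$ with $X \cap \omega_1 = N \cap \omega_1$ is stationary, hence meets the club of $X$ that are "$(N,\P_S)$-suitable" in the appropriate sense (i.e.\ $X = Y \cap H_\kappa$ for some $Y \prec$ enough structure with $Y \cap \omega_1 = N\cap\omega_1$). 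Using such an $X$, construct a descending sequence of conditions below $p$, all lying in $N$, meeting the finitely-many-per-step dense sets needed to force more and more of $\dot E$ below $N \cap \omega_1$, and whose chains have union exactly $X$; the limit condition $q$ is $p$ together with $X$ as a new top node. A genericity/reflection argument shows $q \Vdash \sup(\dot E \cap (N\cap\omega_1)) = N\cap\omega_1$, so $q \Vdash N\cap\omega_1 \in \dot E$ since $\dot E$ is forced closed; as $N \cap \omega_1 \in T$ this is what was needed.

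For the converse, "$\P_S \in \SSP \Rightarrow S$ projective stationary": suppose $S$ is not projective stationary, so there is a stationary $T \sub \omega_1$ such that $S_T := \{X \in S \st X \cap \omega_1 \in T\}$ is non-stationary; fix a club $\calC \sub [H_\kappa]^\omega$ (equivalently a function $F : H_\kappa^{<\omega} \to H_\kappa$) disjoint from $S_T$. In $V[G]$ with $G$ generic and $\vec X$ the generic chain, the set $C = \{i < \omega_1 \st X_i \text{ is closed under } F\}$ is club in $\omega_1$, and every $X_i$ lies in $S$ (being a member of some condition in $G$); so for $i \in C$, $X_i \in \calC \cap S$, forcing $X_i \cap \omega_1 \notin T$. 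Hence $V[G] \models$ "$\{X_i \cap \omega_1 \st i \in C\}$ is a club in $\omega_1$ disjoint from $T$", i.e.\ $T$ is non-stationary in $V[G]$, so $\P_S \notin \SSP$. One must check that $\{X_i \cap \omega_1 \st i\in C\}$ really is a \emph{club} — it is unbounded because $\bigcup X_i = H_\kappa$ so $\sup_i (X_i \cap \omega_1) = \omega_1$, and closure follows from continuity of the chain — and that $C$ is club, which is the standard fact that the set of stages at which the chain is closed under a fixed finitary function is club.

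The main obstacle is the elementary-submodel bookkeeping in the first direction: one has to simultaneously (a) keep the approximating conditions inside the right countable model $N$ so that their union is forced to be an element of $S$ with the prescribed trace on $\omega_1$, and (b) meet enough dense sets to pin down $\dot E$ below $N \cap \omega_1$. Marrying these requires the hypothesis of projective stationarity applied at the level of $N\cap\omega_1 \in T$, together with a careful choice of the suitable $X \in S$ — this is exactly the place where Feng and Jech's original argument does the real work, and I would follow that template.
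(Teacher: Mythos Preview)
The paper does not prove this statement itself; it is cited as a fact from Feng and Jech. However, the paper's proof of Theorem~\ref{thm:SSP-AdequacyIsPSonVecT} (the analogous characterization for $\P^{\DSRP}_{\mathcal{S},\vec{T}}$) follows the standard Feng--Jech template, so a comparison is available. Your backward direction is correct and matches that template.

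Your forward direction contains a genuine error. You first fix a countable $N \prec H_\theta$ with $N \cap \omega_1 \in T$, and then assert that projective stationarity makes $\{X \in S : X \cap \omega_1 = N \cap \omega_1\}$ stationary in $[H_\kappa]^\omega$. This is false: for any fixed countable ordinal $\alpha$, the set $\{X : X \cap \omega_1 = \alpha\}$ is nonstationary, being disjoint from the club $\{X : \alpha \in X\}$. Projective stationarity only yields that $\{X \in S : X \cap \omega_1 \in T\}$ is stationary for each stationary $T$, not that any single fiber is large. Moreover, even granting such an $X$, conditions lying in $N$ have all their nodes in $N \cap H_\kappa$, so the union of an $N$-generic chain is exactly $N \cap H_\kappa$, not an independently chosen $X$. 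The fix, which is precisely what the paper does in the proof of Theorem~\ref{thm:SSP-AdequacyIsPSonVecT}, is not to separate the two choices: lift $\{X \in S : X \cap \omega_1 \in T\}$ to $[H_\theta]^\omega$ and directly pick a countable $N \prec H_\theta$ (containing the parameters) with $N \cap H_\kappa \in S$ \emph{and} $N \cap \omega_1 \in T$ simultaneously. An $N$-generic filter $g \ni p$ then gives a chain of length $\delta = N \cap \omega_1$ with union $N \cap H_\kappa \in S$, and appending $N \cap H_\kappa$ yields a condition forcing $\delta \in \dot{E} \cap T$.
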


The concept of projective stationarity was generalized in \cite{Fuchs:CanonicalFragmentsOfSRP} as follows.

\begin{defn}
\label{def:GammaProjectiveStationary}
Let $\Gamma$ be a forcing class. Then a stationary subset $S$ of $H_\kappa$, where $\kappa\ge\omega_2$ is regular, is \emph{$\Gamma$-projective stationary} iff $\P_S\in\Gamma$.
\end{defn}

Generalizing the above formulation of \SRP, we arrive at the fragments of this principle, as introduced in \cite{Fuchs:CanonicalFragmentsOfSRP}.

\begin{defn}
\label{def:Gamma-SRP}
Let $\Gamma$ be a forcing class. Let $\kappa\ge\omega_2$ be regular. The \emph{strong reflection principle for $\Gamma$ at $\kappa$}, denoted $\Gamma$-$\SRP(\kappa)$, states that whenever $S\sub[H_\kappa]^\omega$ is $\Gamma$-projective stationary, then $S$ contains a continuous chain of length $\omega_1$. The \emph{strong reflection principle for $\Gamma$,} $\Gamma$-\SRP, states that $\Gamma$-$\SRP(\kappa)$ holds for every $\kappa\ge\omega_2$.
\end{defn}

By design, $\FA(\Gamma)$ implies $\Gamma$-\SRP. Let us now turn to subcompleteness and its simplification, $\infty$-subcompleteness, introduced in \cite{Fuchs-Switzer:IterationThmForSubversions}.

\begin{defn}
\label{def:Full}
A transitive model $N$ of \ZFCm is \emph{full} if there is an ordinal $\gamma>0$ such that $L_\gamma(N)\models\ZFCm$ and $N$ is regular in $L_\gamma(N)$, meaning that if $a\in N$, $f:a\To N$ and $f\in L_\gamma(N)$, then $\ran(f)\in N$. A set $X$ is full if the transitive isomorph of $\kla{X,\in\cap X^2}$ is full.
\end{defn}

\begin{defn}
\label{def:Density}
The density of a poset $\P$, denoted $\delta(\P)$, is the least cardinal $\delta$ such that there is a dense subset of $\P$ of size $\delta$.
\end{defn}

\begin{defn}
\label{def:(ininifty-)subcompleteness}
A forcing notion $\P$ is \emph{subcomplete} if there is a cardinal $\theta$ which \emph{verifies} the subcompleteness of $\P$, which means that $\P\in H_\theta$, and for any \ZFCm{} model $N=L_\tau^A$ with $\theta<\tau$ and $H_\theta\sub N$, any $\sigma:\bN\prec N$ such that $\bN$ is countable, transitive and full and such that $\P,\theta,\eta\in\ran(\sigma)$, any $\bar{G}\sub\bar{\P}$ which is $\bar{\P}$-generic over $\bN$, any $\bar{s}\in\bN$, and any ordinals $\blambda_0,\ldots,\blambda_{n-1}$ such that $\blambda_0=\On\cap\bN$ and $\blambda_1,\ldots,\blambda_{n-1}$ are regular in $\bN$ and greater than $\delta(\bar{\P})^\bN$, the following holds. Letting $\sigma(\kla{\bar{\theta},\bar{\P},\bar{\eta}})=\kla{\theta,\P,\eta}$, and setting $\bar{S}=\kla{\bar{s},\bar{\theta},\bar{\P}}$, there is a condition $p\in\P$ such that whenever $G\sub\P$ is $\P$-generic over $\V$ with $p\in G$, there is in $\V[G]$ a $\sigma'$ such that
	\begin{enumerate}[label=(\arabic*)]
		\item $\sigma':\bN\prec N$,
        \label{item:FirstSubcompletenessCondition}
		\item $\sigma'(\bar{S})=\sigma(\bar{S})$,
		\item $(\sigma')``\bar{G}\sub G$,
		\item
		\label{item:SupremumCondition}
        $\sup\sigma``\blambda_i=\sup\sigma'``\blambda_i$ for each $i<n$.
	\end{enumerate}
$\P$ is \emph{$\infty$-subcomplete} iff the above holds, with condition \ref{item:SupremumCondition} removed.

We denote the classes of subcomplete and $\infty$-subcomplete forcing notions by $\SC$ and $\infSC$, respectively.
\end{defn}

The following definition, again from \cite{Fuchs:CanonicalFragmentsOfSRP}, is designed to capture $\infSC$-projective stationarity.

\begin{defn}
\label{def:SpreadOut}
Let $D$ be a set (usually of the form $D=H_\kappa$, for some uncountable regular cardinal $\kappa$). A set $S\sub[D]^\omega$ with $\bigcup S=D$ is \emph{spread out} (in $D$) if for every sufficiently large cardinal $\theta$ with $S\in H_\theta$, whenever $\tau$, $A$, $X$ and $a$ are such that $H_\theta\sub L_\tau^A=N\models\ZFCm$, $\theta<\tau$, $S,a,\theta\in X$, $N|X\prec N$, and $N|X$ is countable and full, then there are a $Y$ such that $N|Y\prec N$ and an isomorphism $\pi:N|X\To N|Y$ such that $\pi(a)=a$  and $Y\cap H_\kappa\in S$.
\end{defn}

The remaining definitions and results are from \cite{Fuchs:CanonicalFragmentsOfSRP}.

\begin{defn}
\label{def:WeaklySpreadOut}
Let $D$ be a set. A set $S\sub[D]^\omega$ with $\bigcup S=D$ is \emph{weakly spread out} if there is a set $b$ such that the condition described in Definition \ref{def:SpreadOut} is true of all $X$ with $S,\theta,b\in X$.
\end{defn}

\begin{fact}
\label{fact:WeaklySpreadOutImpliesSpreadOut}
Let $\kappa$ be an uncountable regular cardinal. A stationary set $S\sub[H_\kappa]^\omega$ is spread out iff it is weakly spread out.
\end{fact}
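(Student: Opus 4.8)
The plan is as follows. The implication ``spread out $\Rightarrow$ weakly spread out'' needs no work: if $S$ is spread out then the conclusion of Definition~\ref{def:SpreadOut} holds for all admissible $X$, in particular for those that additionally contain any prescribed set $b$ (take $b=\emptyset$, say). So assume from now on that $S$ is stationary and weakly spread out, witnessed by a fixed set $b$, and let us show $S$ is spread out. Choose the threshold for ``sufficiently large'' so that $S,b\in H_\theta$, and suppose $H_\theta\subseteq N=L_\tau^A\models\ZFCm$, $\theta<\tau$, and $X,a$ are given with $S,a,\theta\in X$, $N|X\prec N$, $N|X$ countable and full. Since $a$ ranges over all sets in Definition~\ref{def:SpreadOut}, replace $a$ by $\langle a,S,\theta\rangle$; then any isomorphism or elementary embedding fixing $a$ also fixes $S$, $\theta$, $\kappa$, and $H_\kappa$. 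If $X\cap H_\kappa\in S$, take $Y=X$, $\pi=\id$; if $b\in X$, apply the weak-spread-out hypothesis to $X$ itself. So the only case to treat is $X\cap H_\kappa\notin S$ and $b\notin X$.

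In that case the first step is to adjoin $b$: fix a countable, full $X^*\prec N$ with $X\cup\{b\}\subseteq X^*$ (such $X^*$ exist, the full countable elementary submodels of $N$ being stationary and those containing the fixed countable set $X\cup\{b\}$ being obtained by intersecting with a club), and apply the weak-spread-out hypothesis to $X^*$ with parameter $a$. This gives $Y^*$ with $N|Y^*\prec N$, an isomorphism $\pi^*\colon N|X^*\to N|Y^*$ with $\pi^*(a)=a$ (hence $\pi^*$ also fixes $S,\theta,\kappa,H_\kappa$), and $Y^*\cap H_\kappa\in S$. Putting $Y=\pi^*``X$ and $\pi=\pi^*\restriction X$ yields $N|Y\prec N$ (transfer $N|X\prec N|X^*$ through $\pi^*$), an isomorphism $\pi\colon N|X\to N|Y$, and $\pi(a)=a$, with $Y\cap H_\kappa=\pi^*``(X\cap H_\kappa)$. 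Note that if the hull $X^*$ happened to satisfy $X^*\cap H_\kappa=X\cap H_\kappa$, then $Y\cap H_\kappa=Y^*\cap H_\kappa\in S$ and we would be finished.

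The main obstacle is precisely that adjoining $b$ to the hull may create new elements of $H_\kappa$, so that $\pi^*``(X\cap H_\kappa)$ is in general only a proper subset of the member $Y^*\cap H_\kappa$ of $S$ and need not lie in $S$. Overcoming this is the crux, and it is where the stationarity of $S$ enters. The natural way to bring it in is to reflect the whole configuration into a countable $W\prec H_\lambda$, for $\lambda$ suitably large, with $N,X,a,b,S\in W$ and $X\subseteq W$, chosen so that $W\cap H_\kappa\in S$ --- which is possible because $S\projectup[H_\lambda]^{\omega}$ is stationary since $S$ is --- and to run the adjoin-and-reflect argument relative to the transitive collapse of $W$, so that the good trace $W\cap H_\kappa$ of $W$ gets carried, through the collapse of $W$ and the transitive collapse $\bar N$ of $N|X$, onto the image of $X\cap H_\kappa$ under the embedding of $\bar N$ into $N$ that one extracts (equivalently, onto the trace $Y\cap H_\kappa$ of the desired $Y$). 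Making this precise --- in particular ensuring that what is reflected into $S$ is exactly the image of $X\cap H_\kappa$ and not something strictly larger, and keeping the auxiliary submodels (such as $X^*$) full so that the weak-spread-out hypothesis remains applicable --- is the substantive part of the argument; I expect this trace-matching to be the real difficulty.
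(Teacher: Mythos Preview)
The paper does not prove this fact; it is stated as background and attributed to \cite{Fuchs:CanonicalFragmentsOfSRP}. So there is no ``paper's own proof'' to compare against, only the argument in the cited source (which is an adaptation of Jensen's parameter-absorption lemma for subcompleteness).

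Your write-up is not a proof. You correctly dispose of the trivial direction, correctly set up the hard one, and correctly isolate the obstacle: enlarging $X$ to $X^*\ni b$ may strictly enlarge $X\cap H_\kappa$, so the restriction $\pi^*\rest X$ of the map given by the weak hypothesis need not land its range's $H_\kappa$-trace in $S$. But you then stop, saying ``I expect this trace-matching to be the real difficulty,'' and the sketch you offer --- reflecting everything into a countable $W\prec H_\lambda$ with $W\cap H_\kappa\in S$ --- does not bridge the gap. The weak-spread-out hypothesis is a statement in $V$ about hulls of models $L_\tau^A$; it does not transfer to the collapse of $W$, and knowing $W\cap H_\kappa\in S$ gives you no handle on the $H_\kappa$-trace of any embedding of the fixed transitive model $\bar N$ into $N$. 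Stationarity of $S$ is a red herring here.

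The standard argument (Jensen's trick, see \cite[p.~116, Lemma~2.5]{Jensen2014:SubcompleteAndLForcingSingapore}) does not enlarge $X$ within the same $N$. Instead, one passes to a strictly larger ambient model $N^*=L_{\tau^*}^{A^*}\supseteq H_{\theta^*}$ with $\theta^*$ so large that $N$, $b$, and the original embedding $\sigma:\bar N\prec N$ are all \emph{elements} of $H_{\theta^*}$. One then takes a countable full $X^*\prec N^*$ containing $b$ and $\sigma$ (and hence $\bar N$, $N$, $a$), and applies the weak-spread-out hypothesis to $X^*$ with the parameter $a^*=\langle a,\sigma\rangle$. The point is that once $\sigma$ is an element fixed by the resulting isomorphism $\pi^*$, the countable transitive $\bar N$ and its image $X=\ran(\sigma)$ are fixed \emph{pointwise} by $\pi^*$ (transitivity of $\bar N$ forces $\pi^*\rest\bar N=\id$, and then $\pi^*(\sigma)=\sigma$ forces $\pi^*\rest X=\id$). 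From here one reads off the required elementary $\sigma':\bar N\prec N$ with $\ran(\sigma')\cap H_\kappa\in S$ inside $N^*$. The move to a bigger $N^*$ --- making $\sigma$ itself a parameter --- is what you are missing; it is not recoverable from the stationarity idea you outline.
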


The following theorem is the analog of Fact \ref{fact:SisProjStatIffP_SisSSP} for $\infty$-subcompleteness, giving us a combinatorial characterization of $\infSC$-projective stationarity.

\begin{thm}
\label{thm:SpreadOutIffinfSCProjectiveStationary}
Let $\kappa$ be an uncountable regular cardinal, and let $S\sub[H_\kappa]^\omega$. Then $S$ is spread out iff $S$ is \infSC-projective stationary.
\end{thm}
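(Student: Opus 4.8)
The plan is first to unwind the terminology: by Definition~\ref{def:GammaProjectiveStationary}, ``$S$ is $\infSC$-projective stationary'' means exactly ``$\P_S\in\infSC$'', so the theorem amounts to showing that $S$ is spread out if and only if $\P_S$ is $\infty$-subcomplete. Two auxiliary observations will be used throughout. First, a \emph{density fact}: since $S$ is stationary in $V$, it is stationary in every $N=L_\tau^A\supseteq H_\theta$ and, by elementarity, $S^*:=\sigma^{-1}(S)$ is stationary in every transitive $\bN$ admitting $\sigma\colon\bN\prec N$; hence, writing $\bP:=\sigma^{-1}(\P_S)=\P_{S^*}^{\bN}$ and $\bkappa:=\sigma^{-1}(\kappa)$, for each $z\in H_{\bkappa}^{\bN}$ the set of conditions of $\bP$ whose top node contains $z$ is dense, and so the union of the nodes of any $\bN$-generic chain through $S^*$ is all of $H_{\bkappa}^{\bN}$. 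Second, a \emph{length lemma}: whenever $\P_S$ preserves $\omega_1$ --- in particular whenever $\P_S$ is $\infty$-subcomplete --- the generic $\P_S$-chain has length exactly $\omega_1$ and is continuous, since otherwise the density fact would make $H_\kappa$ countable in the extension and thereby collapse $\omega_1$.

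For the direction ``$S$ spread out $\Rightarrow\P_S\in\infSC$'', I would fix $\theta$ large enough that $\P_S\in H_\theta$ and the condition in Definition~\ref{def:SpreadOut} holds at $\theta$, and claim this $\theta$ verifies $\infty$-subcompleteness of $\P_S$. Given data $N$, $\sigma\colon\bN\prec N$, $\bG$, $\bs$ as in Definition~\ref{def:(ininifty-)subcompleteness} with clause~\ref{item:SupremumCondition} dropped, apply spread-outness in $V$ to $X:=\ran(\sigma)$ and the tuple $a:=\kla{S,\P_S,\kappa,\theta,\sigma(\bs)}\in X$: this returns $Y\prec N$ and an isomorphism $\pi\colon X\to Y$ with $\pi(a)=a$ and $Y\cap H_\kappa\in S$. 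Let $\sigma'$ invert the transitive collapse of $Y$, so $\sigma'\colon\bN\prec N$ and $\pi=\sigma'\circ\sigma^{-1}$; since $\pi$ fixes each coordinate of $a$, we obtain $\sigma'(\kla{\bs,\sigma^{-1}(\theta),\bP})=\sigma(\kla{\bs,\sigma^{-1}(\theta),\bP})$ and $\sigma'(\bkappa)=\kappa$. Writing $\seq{\bX_i}{i<\bdelta}$ for the $\bG$-generic chain, the density fact gives $Y\cap H_\kappa=(\sigma')``H_{\bkappa}^{\bN}=\bigcup_{i<\bdelta}\sigma'(\bX_i)$ (using $\sigma'(\bX_i)=(\sigma')``\bX_i$, as each $\bX_i$ is countable in $\bN$), and $\seq{\sigma'(\bX_i)}{i<\bdelta}$ is a continuous $\in$-chain through $S$. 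The key step is then to take $p$ to be the continuous closure of $\bigcup\bigl((\sigma')``\bG\bigr)$ --- a continuous $\in$-chain through $S$ of some countable limit length --- extended by one more node, its union $Y\cap H_\kappa\in S$; this $p$ is a condition in $\P_S$, and for any generic $G\ni p$ every $\sigma'(\bq)$ with $\bq\in\bG$ is an initial segment of $p$, hence lies in $G$, so the ground-model embedding $\sigma'$ itself witnesses the conclusion of Definition~\ref{def:(ininifty-)subcompleteness}.

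The direction ``$\P_S\in\infSC\Rightarrow S$ spread out'' is the one I expect to be the main obstacle, because Definition~\ref{def:(ininifty-)subcompleteness} only produces the lifted embedding in a $\P_S$-generic \emph{extension} of $V$, whereas spread-outness demands it in $V$. I would circumvent this by collapsing. Fix $\theta$ verifying $\infty$-subcompleteness of $\P_S$ (these $\theta$ form an upward-closed class, so it suffices to treat them); given $N$, $X$, $a$ as in Definition~\ref{def:SpreadOut}, let $\sigma\colon\bN\prec N$ invert the transitive collapse of $X$, pick a $\bP$-generic $\bG$ over $\bN$, choose a countable $M_0\prec H_\Theta$ for a huge $\Theta$ with $\bN\sub M_0$ containing all relevant objects, and let $\bpi\colon M_0\to\bM$ be the transitive collapse. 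Then $\bM$ is countable and transitive, $\bpi$ fixes $\bN$ and $\bG$, and $\bM$ believes that $\bpi(\P_S)$ is $\infty$-subcomplete verified by $\bpi(\theta)$, because $\infty$-subcompleteness is locally verifiable, so $H_\Theta$ --- hence $M_0$, hence $\bM$ --- evaluates it correctly. Applying that belief inside $\bM$ to the collapsed data produces a condition $p^*\in\bpi(\P_S)$; choosing in $V$ an $(\bM,\bpi(\P_S))$-generic $G^*$ with $p^*\in G^*$, the model $\bM[G^*]\in V$ contains a lift $\sigma^{**}$, which transports back along the canonical extension of $\bpi$ to an isomorphism $M_0[\bpi^{-1}``G^*]\to\bM[G^*]$ into an embedding $\sigma'\in V$ with $\sigma'\colon\bN\prec N$ agreeing with $\sigma$ on $\sigma^{-1}(a)$. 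The density fact and the length lemma, applied inside $\bM$, force the relevant limit ordinal to fall below the chain length (else $\ran(\sigma^{**})$ would exhaust $H_{\bpi(\kappa)}^{\bM}$, contradicting cardinality in $\bM[G^*]$), so $\ran(\sigma^{**})\cap H_{\bpi(\kappa)}^{\bM}$ is a node of the generic chain and hence lies in $\bpi(S)$; transporting back, $\ran(\sigma')\cap H_\kappa\in S$, so $Y:=\ran(\sigma')$ together with $\pi:=\sigma'\circ\sigma^{-1}$ witnesses spread-outness for $(N,X,a)$. Apart from this reflection device, the argument uses only standard facts: $\infty$-subcomplete forcing preserves $\omega_1$, subcompleteness is locally verifiable, and elementarity between the set models $\bN$ and $N$ is absolute.
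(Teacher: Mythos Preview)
Your proof is correct. Note, however, that the present paper does not give its own proof of this theorem; it is quoted from \cite{Fuchs:CanonicalFragmentsOfSRP} as background. The closest in-paper analogue is Theorem~\ref{thm:CharacterizationOfInfSCAdquacy}, whose proof can be compared with yours.

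For the forward direction (spread out $\Rightarrow$ $\P_S\in\infSC$), your argument is essentially the same as the paper's: apply the spread-out hypothesis to $X=\ran(\sigma)$ to obtain $\sigma'=\pi\circ\sigma$, and close off the image of the $\bN$-generic chain with the node $Y\cap H_\kappa\in S$ to produce a master condition. This is exactly how the (1)$\Rightarrow$(2) direction of Theorem~\ref{thm:CharacterizationOfInfSCAdquacy} proceeds in the simpler cases.

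For the backward direction your route diverges from the paper's. You reflect the entire situation into a countable transitive $\bM$ via $M_0\prec H_\Theta$, run $\infty$-subcompleteness inside $\bM$, pick an $\bM$-generic $G^*$ in $V$, and then pull the resulting embedding back along the collapse. The paper instead argues (in Theorem~\ref{thm:CharacterizationOfInfSCAdquacy}) that the relevant forcing is \emph{countably distributive}, so that the lifted embedding $\sigma'$ promised by $\infty$-subcompleteness in $V[G]$ already lies in $V$; one then reads off $Y\cap H_\kappa\in S$ from the generic chain directly. For $\P_S$ the distributivity is available because $\P_S\in\infSC\subseteq\SSP$ gives projective stationarity of $S$ via Fact~\ref{fact:SisProjStatIffP_SisSSP}, and projective stationarity yields countable distributivity by the argument of Lemma~\ref{lem:CountableDistributivityOfP-DSRP}. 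Your reflection approach avoids this detour through $\SSP$ and distributivity, at the cost of tracking the collapse map carefully (in particular that $\bpi$ fixes $\bN$ and $\bG$, and that the node $Q^*_{\omega_1^{\bN}}\in\bpi(S)\subseteq\bM$ so that $\bpi^{-1}$ applies to it as a set). Both methods are standard; the distributivity route is shorter here, while your reflection device is more portable to situations where distributivity is not immediately available.
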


Spread out sets are stationary, and in fact projective stationary.

\begin{obs}[{\cite[Obs.~2.28]{Fuchs:CanonicalFragmentsOfSRP}}]
\label{obs:SpreadOutImpliesProjectiveStationary}
If a set $S\sub[D]^\omega$ is spread out in $D$, with $\omega_1\sub D$, then $S$ is projective stationary in $D$.	
\end{obs}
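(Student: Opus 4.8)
The goal is to verify Definition~\ref{def:ProjectiveStationary}, so I fix a stationary $T\sub\omega_1$ and a function $F\colon[D]^{<\omega}\To D$, and I must produce $X^*\in S$ that is closed under $F$ and satisfies $X^*\cap\omega_1\in T$. The plan is to run the usual elementary-submodel argument inside an ambient model to which the spread-out property (Definition~\ref{def:SpreadOut}) applies. First I would fix a cardinal $\theta$ large enough to witness that $S$ is spread out and with $F,S,T\in H_\theta$ (so also $D=\bigcup S\sub H_\theta$), and then pick some $N=L_\tau^A\models\ZFCm$ with $H_\theta\sub N$ and $\theta<\tau$, chosen in the standard way so that the set of countable $X$ with $N|X\prec N$ and $N|X$ full contains a club in $[N]^\omega$; this is exactly what makes the ``full $X$'' in Definition~\ref{def:SpreadOut} usable.

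Next I would locate a suitable $X$. Since $T$ is stationary in $\omega_1$, the set $\{X\in[N]^\omega\st X\cap\omega_1\in T\}$ is stationary in $[N]^\omega$ (a standard fact about liftings of stationary subsets of $\omega_1$); intersecting the club of full, countable $X\prec N$ that contain the parameters $F$, $S$, $\theta$ (and hence $\omega_1$, which $N$ computes correctly as $H_{\omega_2}\sub N$) with this stationary set produces a full countable $X$ with $N|X\prec N$, $F,S,\theta\in X$, and $X\cap\omega_1\in T$. Now I apply the spread-out property, taking the parameter ``$a$'' of Definition~\ref{def:SpreadOut} to be $F$: there are a $Y$ with $N|Y\prec N$ and an isomorphism $\pi\colon N|X\To N|Y$ with $\pi(F)=F$ and $Y\cap D\in S$. (If one prefers to avoid quantifying over $a$, one first passes to the weakly spread out formulation via Fact~\ref{fact:WeaklySpreadOutImpliesSpreadOut} and uses its fixed parameter $b$; it changes nothing.)

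Finally I would set $X^*\mdf Y\cap D$ and check the three requirements. By the choice of $Y$, $X^*\in S$, and $X^*\in[D]^\omega$ since $\omega\sub Y\cap D$ and $Y$ is countable. For closure under $F$: given a finite tuple $\vec d$ from $X^*$ we have $\vec d\in Y$, and since $F=\pi(F)\in Y\prec N$ we get $F(\vec d)\in Y$; as $F(\vec d)\in D$ this gives $F(\vec d)\in X^*$. For the last clause, $X^*\cap\omega_1=Y\cap\omega_1$ because $\omega_1\sub D$, and the point is that $\pi$ exhibits $N|X$ and $N|Y$ as having the same transitive collapse $\bM$: for a countable $Z\prec N$ the ordinal $Z\cap\omega_1$ is precisely the image of $\omega_1^N$ under the collapse of $Z$, i.e.\ $(\omega_1)^{\bM}$, so applying this to both $X$ and $Y$ yields $X\cap\omega_1=(\omega_1)^{\bM}=Y\cap\omega_1$, whence $X^*\cap\omega_1=X\cap\omega_1\in T$. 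Since $F$ and $T$ were arbitrary, $S$ is projective stationary in $D$.

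The only real obstacle is the bookkeeping around the ambient model: (a) knowing that $N=L_\tau^A$ can be chosen so that the full countable elementary submodels form a club (a standard fact from Jensen's theory underlying subcomplete forcing), and (b) the collapse computation showing $X\cap\omega_1=Y\cap\omega_1$ from the bare existence of the isomorphism $\pi$ — both routine but worth stating carefully. I note for completeness that there is a shorter black-box route: Theorem~\ref{thm:SpreadOutIffinfSCProjectiveStationary} gives $\P_S\in\infSC$, so the claim reduces to the fact that $\infty$-subcomplete forcings are stationary set preserving, after which Fact~\ref{fact:SisProjStatIffP_SisSSP} finishes it; the direct argument above has the advantage of not depending on that preservation fact.
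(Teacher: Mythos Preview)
The paper does not supply its own proof of this observation; it is simply cited from \cite{Fuchs:CanonicalFragmentsOfSRP}. Your direct argument is correct and is essentially the natural proof one would expect in the cited source: choose a countable full $X\prec N$ with $X\cap\omega_1\in T$ and the relevant parameters in $X$, invoke the spread-out property with $a=F$ to obtain $Y$ with $Y\cap D\in S$ and $\pi(F)=F$, and then observe that $Y\cap\omega_1=X\cap\omega_1$ since $N|X$ and $N|Y$ share a common transitive collapse (and $\omega_1$ collapses to the same ordinal in both). The closure of $Y\cap D$ under $F$ then follows from $F\in Y\prec N$. The two bookkeeping points you flag---that $N=L_\tau^A$ can be chosen so that club-many countable $X\prec N$ are full, and the collapse computation for $Y\cap\omega_1$---are indeed standard.

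Your alternative black-box route via Theorem~\ref{thm:SpreadOutIffinfSCProjectiveStationary}, the inclusion $\infSC\sub\SSP$, and Fact~\ref{fact:SisProjStatIffP_SisSSP} is also valid, though as you correctly note it imports more machinery than the direct argument.
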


Spread out sets satisfy some natural closure properties.

\begin{obs}
\label{obs:SpreadOutSetsClosedUnderIntersectionWithClubs}
Let $\kappa$ be an uncountable regular cardinal, let $S\sub[H_\kappa]^\omega$ be spread out, and let $C\sub[H_\kappa]^\omega$ be club. Then $S\cap C$ is spread out.
\end{obs}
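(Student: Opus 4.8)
The plan is to reduce to the \emph{weakly} spread out version of the property via Fact~\ref{fact:WeaklySpreadOutImpliesSpreadOut}, and then to exploit that the defining parameter of weak spreading-out can be used to force $S$, $C$ and $H_\kappa$ into the relevant hull.

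First I would record the easy preliminaries. By Observation~\ref{obs:SpreadOutImpliesProjectiveStationary}, $S$ is projective stationary, hence stationary, and the intersection of a stationary subset of $[H_\kappa]^\omega$ with a club is stationary, so $S\cap C$ is stationary. Moreover $\bigcup(S\cap C)=H_\kappa$: for each $x\in H_\kappa$ the set $\{z\in[H_\kappa]^\omega\st x\in z\}$ is club, hence meets $S\cap C$, so $x\in\bigcup(S\cap C)$; and $\bigcup(S\cap C)\subseteq\bigcup S=H_\kappa$. By Fact~\ref{fact:WeaklySpreadOutImpliesSpreadOut} it therefore suffices to show that $S\cap C$ is weakly spread out in $H_\kappa$.

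To that end, set $b=\langle S,C,H_\kappa\rangle$, and fix $\theta$ large enough that $S,C,b\in H_\theta$, that $\theta$ witnesses the spreading-out of $S$, and that every transitive $N\models\ZFCm$ with $H_\theta\subseteq N$ computes ``$C$ is club in $[H_\kappa]^\omega$'' correctly (possible because clubness of a subset of $[H_\kappa]^\omega$ is correctly computed once $\theta$ is sufficiently large, as $[H_\kappa]^\omega$ and all $\omega$-sequences of its elements lie in $H_{\kappa^+}\subseteq N$). Now let $\tau$, $A$, $X$, $a$ be as in Definition~\ref{def:SpreadOut}, with $S\cap C,a,\theta,b\in X$, $H_\theta\subseteq L^A_\tau=N\models\ZFCm$, $\theta<\tau$, and $N|X\prec N$ countable and full. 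Since $b\in X$ and $N|X\prec N$, we get $S,C,H_\kappa\in X$, and hence $a':=\langle a,C,H_\kappa\rangle\in X$. Applying the spreading-out of $S$ to $X$ with parameter $a'$ produces a $Y$ and an isomorphism $\pi\colon N|X\To N|Y$ with $N|Y\prec N$, $\pi(a')=a'$ and $Y\cap H_\kappa\in S$. Since pairing is absolute, $\pi(a')=a'$ gives $\pi(a)=a$, $\pi(C)=C$ and $\pi(H_\kappa)=H_\kappa$, so in particular $C,H_\kappa\in Y$.

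It remains to check $Y\cap H_\kappa\in C$. As $N|Y\prec N$, $C,H_\kappa\in Y$, and $N$ thinks ``$C$ is club in $[H_\kappa]^\omega$'', elementarity yields an $F\in Y$ which $N$ regards as a function $[H_\kappa]^{<\omega}\To H_\kappa$ all of whose closure points lie in $C$; by the choice of $\theta$ this is actually correct. Then $Y\cap H_\kappa$ is closed under $F$ (using $F,H_\kappa\in Y$ and $N|Y\prec N$), whence $Y\cap H_\kappa\in C$. Combined with $Y\cap H_\kappa\in S$ and $\pi(a)=a$, this shows that $S\cap C$ is weakly spread out, hence spread out. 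The steps requiring care are exactly the parameter bookkeeping — getting $S$, $C$, $H_\kappa$ into $X$ and making $\pi$ fix $C$ and $H_\kappa$, which is why the detour through ``weakly spread out'' and the augmented parameter $a'$ is needed — together with the standard but slightly delicate fact that a countable elementary submodel containing a club $C\subseteq[H_\kappa]^\omega$ and $H_\kappa$ has its intersection with $H_\kappa$ in $C$; here this last point forces us to choose $\theta$ so large that $N$ is correct about the clubness of $C$.
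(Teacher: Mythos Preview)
Your argument is correct. The paper itself does not supply a proof of this observation---it is quoted from \cite{Fuchs:CanonicalFragmentsOfSRP} as background---so there is no in-paper proof to compare against. Your route via Fact~\ref{fact:WeaklySpreadOutImpliesSpreadOut} (reducing to weakly spread out so that the extra parameters $S$, $C$, $H_\kappa$ can be forced into $X$), augmenting the fixed parameter to $a'=\langle a,C,H_\kappa\rangle$ so that $\pi$ fixes $C$ and $H_\kappa$, and then checking $Y\cap H_\kappa\in C$ via a generating function, is exactly the natural argument.

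One minor simplification: instead of arguing that $N$ is correct about the clubness of $C$ and then using elementarity of $N|Y\prec N$ to find a generating function $F\in Y$, you can simply choose in $V$ a function $F:[H_\kappa]^{<\omega}\to H_\kappa$ whose countable closure points all lie in $C$, and include $F$ in the parameter $b$ (or fold it into $a'$). Then $\pi$ fixes $F$, so $F\in Y$, and $Y\cap H_\kappa$ is closed under $F$ by elementarity of $N|Y\prec N$, giving $Y\cap H_\kappa\in C$ directly. This sidesteps the absoluteness discussion entirely, though what you wrote is also fine once $\theta$ is taken large enough that $[H_\kappa]^\omega\subseteq H_\theta$.
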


\begin{obs}
Let $A\sub B\sub C$, and suppose $S$ is spread out in $B$. Then both $S\projectdown[A]^\omega$ and $S\projectup[C]^\omega$ are spread out.
\end{obs}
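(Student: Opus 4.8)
The plan is to verify the defining condition of Definition~\ref{def:SpreadOut} for each of the two projections, reducing in both cases to the spread-out-ness of $S$ itself. First I would dispatch the bookkeeping that the projections are legitimate inputs to that definition: $\bigcup(S\projectdown[A]^\omega)=(\bigcup S)\cap A=B\cap A=A$; and $S\sub S\projectup[C]^\omega$ (if $x\in S$ then $x\sub B\sub C$ and $x\cap B=x\in S$), so $B\sub\bigcup(S\projectup[C]^\omega)$, while for $c\in C\ohne B$ and any $x\in S$ one has $(x\cup\{c\})\cap B=x\in S$, giving $c\in\bigcup(S\projectup[C]^\omega)$; hence $\bigcup(S\projectup[C]^\omega)=C$.

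The key point is that the reflection clause transfers from $S$ at no cost: if $Y\cap B\in S$, then $Y\cap A=(Y\cap B)\cap A\in\{x\cap A\st x\in S\}=S\projectdown[A]^\omega$, and $(Y\cap C)\cap B=Y\cap B\in S$, while $Y\cap C$, being a superset of the infinite set $Y\cap B$, is itself an infinite countable subset of $C$, so $Y\cap C\in S\projectup[C]^\omega$. Hence, given data $N=L_\tau^{E}$, a countable full $X$ with $N|X\prec N$, and a parameter $a\in X$ as in Definition~\ref{def:SpreadOut} applied to one of the projections, it would be enough to feed \emph{this same} $N$, $X$ and $a$ into the spread-out-ness of $S$: that produces a $Y$ and a $\pi\colon N|X\To N|Y$ with $N|Y\prec N$, $\pi(a)=a$ and $Y\cap B\in S$, which by the previous sentence is precisely what is required for the projection.

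The one nontrivial step --- and what I expect to be the main obstacle --- is that Definition~\ref{def:SpreadOut} applied to a projection only forces the projection itself to lie in $X$, not $S$ and not $B$ (neither of which is in general definable from the projection), so $S$ need not be available as a parameter for an invocation of the spread-out-ness of $S$. To circumvent this I would pass to the notion of weakly spread out (Definition~\ref{def:WeaklySpreadOut}) together with Fact~\ref{fact:WeaklySpreadOutImpliesSpreadOut}: since $S$ is spread out it is stationary (Observation~\ref{obs:SpreadOutImpliesProjectiveStationary}), and hence so are $S\projectdown[A]^\omega$ and $S\projectup[C]^\omega$ by the standard behaviour of stationarity under projections and liftings, so it suffices to show that these two sets are \emph{weakly} spread out. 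For each of them I would use the auxiliary parameter $b\mdf S$; then in the definition of weakly spread out one is handed an $X$ that already contains $S$, and, choosing $\theta$ large enough to also verify the spread-out-ness of $S$, the data $N$, $X$, $a$, $\theta$ constitute a legitimate instance of Definition~\ref{def:SpreadOut} for $S$, yielding the $Y$ and $\pi$ that, as explained, witness the required instance for the projection; Fact~\ref{fact:WeaklySpreadOutImpliesSpreadOut} then upgrades this to ``spread out''. (As in that Fact, $A$ and $C$ are taken of the form $H_\kappa$, which is the relevant setting for the applications.) Apart from this parameter bookkeeping, the proof is pure definition-chasing.
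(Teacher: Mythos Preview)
The paper does not supply a proof of this observation; it is listed in the background section among results imported from \cite{Fuchs:CanonicalFragmentsOfSRP}. Your argument is correct and is the natural one: the only genuine issue is that $S$ need not be definable from its projection, and you handle this exactly as one should, by passing through the weakly-spread-out notion with auxiliary parameter $b=S$ and invoking Fact~\ref{fact:WeaklySpreadOutImpliesSpreadOut}. Your caveat that this invocation requires the ambient sets to be of the form $H_\kappa$ is accurate and worth flagging; this is the only setting in which the observation is applied in the paper, so the restriction is harmless.
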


The natural analogs of these closure properties are known to hold for projective stationary sets as well. We will use the following standard notation frequently.

\begin{defn}
Let $\kappa$ be an ordinal, and let $\rho$ be a regular cardinal. Then we write
\[S^\kappa_\rho=\{\alpha<\kappa\st\cf(\kappa)=\rho\}.\]	
\end{defn}

The following provides an important collection of spread out sets.

\begin{lem}
\label{lem:LiftingsAreSpreadOut}
Let $\kappa>2^\omega$ be a regular cardinal, and let $B\sub S^\kappa_\omega$ be stationary. Then the set
\[S=\{X\in[H_\kappa]^\omega\st \sup(X\cap\kappa)\in B\}=\lifting(B,[H_\kappa]^\omega)\]
is spread out.
\end{lem}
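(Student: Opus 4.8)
The plan is to verify the combinatorial condition in Definition~\ref{def:SpreadOut} directly, using the hypothesis that $\kappa > 2^\omega$ to control the critical point behavior of the isomorphism we are going to build. Fix a sufficiently large $\theta$ with $S \in H_\theta$, and suppose $\tau$, $A$, $X$, $a$ are as in Definition~\ref{def:SpreadOut}: $H_\theta \subseteq L^A_\tau = N \models \ZFCm$, $\theta < \tau$, $S, a, \theta \in X$, $N|X \prec N$, and $N|X$ is countable and full. Let $\bar{N} = N|X$ with transitive collapse $c : N|X \to \bar{N}$, so $\bar{N}$ is countable, transitive and full. I want to find $Y$ with $N|Y \prec N$ and an isomorphism $\pi : N|X \to N|Y$ fixing $a$ with $Y \cap H_\kappa \in S$, i.e.\ with $\sup(Y \cap \kappa) \in B$.

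First I would observe that, since $\bar{N}$ is countable, $\bar\kappa := c(\kappa)$ has countable cofinality in $\bar N$ (indeed $\bar N$ thinks $\bar\kappa$ is a regular cardinal, but externally $\bar N$ is countable). Now the key point: because $B \subseteq S^\kappa_\omega$ is stationary in $\kappa$ and $\kappa > 2^\omega$, the set of $Y$ of size $2^\omega$ (or rather, the set of suprema achievable by suitable elementary submodels) with $\sup(Y \cap \kappa) \in B$ is rich. More precisely, I would run the standard "reflect then stretch" argument: let $\delta = \sup(X \cap \kappa) = \sup(\bar N \cap \bar\kappa)$ read off inside the collapse. Pick any $\gamma \in B$; since $\gamma$ has cofinality $\omega$, choose an increasing $\omega$-sequence $\langle \gamma_n \mid n < \omega\rangle$ cofinal in $\gamma$. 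I then build a new elementary submodel $N|Y \prec N$ as the Skolem hull of $(X \cap H_\theta)$-relevant parameters together with $\{\gamma_n \mid n < \omega\}$ — being careful to take the hull inside $N$ of a countable set so that $N|Y$ remains countable, and arranging $a \in Y$. The hypothesis $\kappa > 2^\omega$ is what lets me guarantee that I can choose $Y$ so that $Y \cap \kappa$ is an $\omega$-sequence with supremum exactly $\gamma$: the collapse structure of $X$ below $\kappa$ is essentially an $\omega$-sequence of ordinals together with a countable amount of bounded information, and since $\gamma$ has uncountable-model-internal regularity but external cofinality $\omega$, one can transplant this structure to sit cofinally below $\gamma$ without disturbing the rest. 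The isomorphism $\pi : N|X \to N|Y$ is then the composite of the collapse $c$ with the inverse collapse of $N|Y$; since the two hulls have the same transitive collapse $\bar N$ (by construction, replacing the "tail" sequence cofinal in $\bar\kappa$ by one cofinal in a different ordinal doesn't change the isomorphism type when done correctly — this is where fullness and the exact bookkeeping matter), $\pi$ is a well-defined isomorphism. Arranging $a \in X \cap Y$ with $c(a)$ fixed gives $\pi(a) = a$.

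The main obstacle I anticipate is making the "transplant" precise: one needs that $N|X$ and $N|Y$ genuinely have the same transitive collapse, which requires that the only thing being altered is a cofinal-in-$\bar\kappa$ indiscernible-like sequence, and that $N \models \ZFCm$ together with fullness of the collapses is enough to guarantee this. The cleanest way to organize this is probably to first show that for a spread out witness it suffices to find, for cofinally many $\gamma < \kappa$ of cofinality $\omega$, some $N|Y \prec N$ with $\bar N \cong N|Y$, $a$ fixed, and $\sup(Y \cap \kappa) = \gamma$; then the stationarity of $B$ finishes it since $B$ meets every club, in particular the club of such $\gamma$ that arise. An alternative, and perhaps slicker, route is to invoke Fact~\ref{fact:WeaklySpreadOutImpliesSpreadOut} and only verify the \emph{weakly} spread out condition, fixing once and for all a parameter $b$ coding a surjection $2^\omega \onto \kappa$-type object or a sequence witnessing $\kappa > 2^\omega$, which makes the transplant uniform; I would mention this as the preferred packaging. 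Either way, the role of $\kappa > 2^\omega$ is exactly to ensure there is enough room below $\kappa$ — relative to the $2^\omega$-sized "width" of the possible collapse patterns — to land the supremum inside the stationary set $B$.
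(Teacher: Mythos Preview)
First, a framing note: the present paper does not actually contain a proof of this lemma; it is quoted from \cite{Fuchs:CanonicalFragmentsOfSRP} as background. So there is no in-paper argument to compare against, only the original source.

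That said, your proposal has a genuine gap at exactly the point you yourself flag. Your construction of $Y$ as the Skolem hull in $N$ of finitely many parameters together with a cofinal $\omega$-sequence $\langle\gamma_n\mid n<\omega\rangle$ below some chosen $\gamma\in B$ does \emph{not} in general produce a structure whose transitive collapse equals $\bar N$. Adding fresh ordinals to a hull typically changes the isomorphism type; there is no ``indiscernibility'' available here that would make the collapse insensitive to which cofinal sequence you feed in. So the sentence ``since the two hulls have the same transitive collapse $\bar N$ (by construction\dots)'' is exactly where the argument breaks, and nothing later repairs it. Your fallback suggestion (verify weak spread-outness with a cleverly chosen parameter $b$) does not by itself solve the problem either: no single parameter forces arbitrary hulls to share a prescribed collapse.

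The argument in \cite{Fuchs:CanonicalFragmentsOfSRP} does not try to hit an arbitrary $\gamma\in B$. Rather, one fixes the collapse data $(\bar N,\bar a,\bar S,\bar\kappa,\bar\theta)$ and shows that the set
\[
C=\bigl\{\sup(\sigma'{}``\bar\kappa)\;:\;\sigma':\bar N\prec N,\ \sigma'(\bar a,\bar S,\bar\kappa,\bar\theta)=(a,S,\kappa,\theta)\bigr\}
\]
is large enough (it contains an $\omega$-club in $\kappa$) that the \emph{stationarity} of $B\subseteq S^\kappa_\omega$ forces $B\cap C\neq\emptyset$. The hypothesis $\kappa>2^\omega$ enters not as ``room to transplant'' but through the fact that there are only $2^\omega$ many possible countable transitive structures (with marked parameters), so the particular type $(\bar N,\bar a,\ldots)$ must recur with suprema unbounded in $\kappa$; combined with a closure argument under $\omega$-limits, this yields the $\omega$-club. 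This is the substantive step you are missing, and it cannot be replaced by a direct hull construction aimed at a single $\gamma$.
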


\section{The diagonal strong reflection principle for a forcing class}
\label{sec:SDRP-Gamma}


The idea for the diagonal strong reflection principle is that instead of guaranteeing the existence of a continous $\in$-chain of length $\omega_1$ through each projective stationary set individually, it postulates the existence of such a sequence through a whole \emph{collection} $\mathcal{S}$ of (appropriate) sets. The way the sequence passes through the sets is designed so as to give it a ``diagonal'' flavor. The following definition makes this precise.

\begin{defn}
\label{def:DSRP-preliminiary}
Let $\mathcal{S}$ be a collection of stationary subsets of $[H_\kappa]^\omega$. Let $\vT=\seq{T_i}{i<\omega_1}$ be a sequence of pairwise disjoint stationary subsets of $\omega_1$, and let $X$ be a set. Then $\kla{\vQ,\vS}$ is a \emph{diagonal chain through $\mathcal{S}$ up to $X$ with respect to $\vT$} if
\begin{enumerate}
  \item $\vQ=\seq{Q_i}{i<\omega_1}$ is a continuous $\in$-chain of countable subsets of $H_\kappa$: 
  	 \begin{enumerate}
  	 \item  for all $i<\omega_1$, $Q_i\in Q_{i+1}$,
  	 \item and for limit $\lambda<\omega_1$, $Q_\lambda=\bigcup_{i<\lambda}Q_i$,
  	 \end{enumerate}
  \item $\vS=\seq{S_i}{i<\omega_1}$ is a sequence of members of $\mathcal{S}$, such that whenever $i\in T_j$, then $Q_i\in S_j$,
  \item $H_\kappa\cap X=\bigcup_{\alpha<\omega_1}Q_\alpha$, and for all $\alpha<\omega_1$, $\seq{Q_i}{i<\alpha}\in H_\kappa\cap X$,
  \item $\mathcal{S}\cap X=\{S_i\st i<\omega_1\}$.
\end{enumerate}

We also formulate a slightly simpler version of this concept, independent of the particular sequence $\vT$. All we need is $\mathcal{S}$, a collection of stationary subsets of $[H_\kappa]^\omega$. Then $\seq{Q_i}{i<\omega_1}$ is a \emph{diagonal chain through $\mathcal{S}$ up to $X$} if:
\begin{enumerate}
  \item $\vQ=\seq{Q_i}{i<\omega_1}$ is a continuous $\in$-chain of countable subsets of $H_\kappa$, %
  \item For every $S\in X\cap\mathcal{S}$, the set $\{i<\omega_1\st Q_i\in S\}$ is stationary in $\omega_1$,
  \item $H_\kappa\cap X=\bigcup_{i<\omega_1}Q_i$, and for all $\alpha<\omega_1$, $\seq{Q_i}{i<\alpha}\in H_\kappa\cap X$.
\end{enumerate}
Such a chain is \emph{exact} if in addition,
\begin{enumerate}
\setcounter{enumi}{3}
  \item for every $i<\omega_1$, $Q_i\in\bigcup(X\cap\mathcal{S})$.
\end{enumerate}
\end{defn}

\begin{obs}
Let $\mathcal{S}$, $\kappa$, $\vT$ be as in Definition \ref{def:DSRP-preliminiary}, and suppose that $\kla{\vQ,\vS}$ is a diagonal chain through $\mathcal{S}$ up to $X$ with respect to $\vT$. Then
\begin{enumerate}[label=\rm{(\arabic*)}]
  \item $\vQ$ is a diagonal chain through $\mathcal{S}$ up to $X$.
  \item If $\bigcup_{i<\omega_1}T_i=\omega_1$, then $\vQ$ is an exact diagonal chain through $\mathcal{S}$ up to $X$.
  \item
  \label{item:ContainingAClubIsEnoughForExactness}
  If $\kappa\ge\omega_2$ is regular, $\kla{H_\kappa\cap X,\in}\prec\kla{H_\kappa,\in}$, $\vT\in X$ and $\bigcup_{i<\omega_1}T_i$ contains a club, then there is a diagonal chain $\kla{\vR,\vS}$ through $\mathcal{S}$ up to $X$ with respect to some $\seq{\bar{T}_i}{i<\omega_1}$ such that $\bigcup\bar{T}_i=\omega_1$. Hence, $\vR$ is an exact diagonal chain through $\mathcal{S}$ up to $X$.
\end{enumerate}
\end{obs}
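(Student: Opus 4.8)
The plan is to handle each of the three clauses by unwinding Definition~\ref{def:DSRP-preliminiary}, with essentially all of the content sitting in clause~(3). For clause~(1) I would just match the three requirements in the ``simpler'' notion of a diagonal chain against what $\kla{\vQ,\vS}$ already gives: the requirement that $\vQ$ be a continuous $\in$-chain and the requirement $H_\kappa\cap X=\bigcup_i Q_i$ with all proper initial segments $\seq{Q_i}{i<\alpha}$ in $H_\kappa\cap X$ are verbatim among the hypotheses, and for the stationarity requirement one fixes $S\in X\cap\mathcal{S}$, writes $S=S_j$ using $\mathcal{S}\cap X=\{S_i\st i<\omega_1\}$, and observes $T_j\sub\{i<\omega_1\st Q_i\in S_j\}$, so $\{i<\omega_1\st Q_i\in S\}$ is stationary because $T_j$ is. For clause~(2), under the assumption $\bigcup_j T_j=\omega_1$ every $i<\omega_1$ lies in some $T_j$, whence $Q_i\in S_j\sub\bigcup(X\cap\mathcal{S})$, which is exactly the exactness clause.

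The substantive clause is~(3), and the idea is to reparametrize $\vQ$ along a club contained in $\bigcup_j T_j$ that is itself a member of $X$, keeping $\vS$ fixed and transporting $\vT$ along the reparametrization. First I would argue that such a club exists in $X$: since $\vT\in X$ we also have $U:=\bigcup_{j<\omega_1}T_j\in X$, and because $\kappa\ge\omega_2$ all subsets of $\omega_1$ (in particular all clubs on $\omega_1$) lie in $H_\kappa$, so the true statement ``some club of $\omega_1$ is a subset of $U$'' reflects from $H_\kappa$ down via $\kla{H_\kappa\cap X,\in}\prec\kla{H_\kappa,\in}$, yielding a genuine club $C\in X$ with $C\sub U$. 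Choosing $C$ \emph{inside} $X$, rather than taking an arbitrary club furnished by the hypothesis, is the key point. Let $c\colon\omega_1\To C$ be the monotone continuous enumeration; then $c\in X$. I would then put $R_i:=Q_{c(i)}$, $\vR:=\seq{R_i}{i<\omega_1}$, retain $\vS$, and set $\bar{T}_j:=c^{-1}[T_j]$, and verify that $\kla{\vR,\vS}$ is a diagonal chain through $\mathcal{S}$ up to $X$ with respect to $\seq{\bar{T}_j}{j<\omega_1}$, with $\bigcup_j\bar{T}_j=\omega_1$. The bulk of this is routine: $\vR$ is again a continuous $\in$-chain because reindexing along the monotone continuous enumeration of a club preserves this (using continuity of $c$ and of $\vQ$ at limit stages); $\bigcup_i R_i=\bigcup_{\beta\in C}Q_\beta=\bigcup_{\beta<\omega_1}Q_\beta=H_\kappa\cap X$ since $C$ is cofinal and $\vQ$ increasing; $\mathcal{S}\cap X=\{S_i\st i<\omega_1\}$ is untouched; if $i\in\bar{T}_j$ then $c(i)\in T_j$, so $R_i=Q_{c(i)}\in S_j$; the $\bar{T}_j$ are pairwise disjoint because the $T_j$ are, each is stationary because $c^{-1}$ carries stationary sets to stationary sets (the monotone enumeration of a club maps clubs to clubs) and $\bar{T}_j=c^{-1}[T_j\cap C]$ with $T_j\cap C$ stationary, and $\bigcup_j\bar{T}_j=c^{-1}[U]=\omega_1$ because $C\sub U$.

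The one step that genuinely uses the hypotheses beyond bookkeeping is showing $\seq{R_i}{i<\alpha}\in H_\kappa\cap X$ for every $\alpha<\omega_1$. Here I would first observe that the hypothesis $\seq{Q_i}{i<\beta}\in H_\kappa\cap X$ for all $\beta<\omega_1$, together with elementarity of $H_\kappa\cap X$ in $H_\kappa$, forces $\omega_1\sub X$ (apply the definable domain operation to $\seq{Q_i}{i<\beta}$ to get $\beta\in X$). Given this, $c(\alpha),\alpha\in X$ and $\seq{Q_i}{i<c(\alpha)+1}\in X$, and $\seq{R_i}{i<\alpha}$ is definable in $H_\kappa\cap X$ from these parameters together with $c$, so it lies in $H_\kappa\cap X$. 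Finally, the concluding assertion of clause~(3)---that $\vR$ is an exact diagonal chain---I would obtain by feeding $\kla{\vR,\vS}$ and $\seq{\bar{T}_j}{j<\omega_1}$ back into clauses~(1) and~(2) just proved. I expect the only real obstacle to be the interplay in clause~(3) between pulling the club $C$ into $X$ via elementarity and then noticing that $\omega_1\sub X$ is what lets the initial-segment condition survive the reparametrization; once those two observations are in place, everything else is direct verification.
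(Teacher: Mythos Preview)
Your proposal is correct and follows essentially the same approach as the paper: for clause~(3), both you and the paper pull a club $C\sub\bigcup_j T_j$ into $X$ via elementarity, reindex $\vQ$ along its monotone enumeration, and set $\bar{T}_j=c^{-1}``T_j$, with the initial-segment condition handled by definability inside $H_\kappa\cap X$. Your write-up is more detailed---in particular you make explicit why $\omega_1\sub X$ (which the paper leaves implicit)---but the argument is the same.
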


\begin{proof}
We outline the straightforward proof of \ref{item:ContainingAClubIsEnoughForExactness}. By elementarity of $H_\kappa\cap X$, and since $\vT\in X$, it follows that there is a club $C\sub\bigcup_{i<\omega_1}T_i$ in $X$. Hence, the monotone enumeration $f$ of $C$ is also in $X$. Define for $i<\omega_1$:
\[R_i=Q_{f(i)},\ \bar{T}_i=f^{-1}``T_i.\]
It is then easy to check that $\vec{\bT}$ is a partition of $\omega_1$ into stationary sets and $\kla{\vR,\vS}$ is a diagonal chain through $\mathcal{S}$ with respect to $\vec{\bT}$, as wished. Since $f\in X$ and for all $\alpha<\omega_1$, $\vQ\rest\alpha\in X$, it follows that for all $\alpha<\omega_1$, $\vR\rest\alpha\in X$, as $\kla{H_\kappa\cap X,\in}\prec\kla{H_\kappa,\in}$.    	
\end{proof}

We introduce a canonical forcing to add diagonal chains. It is a variation of a forcing notion from Cox \cite{Cox:DRP}, which, in turn, is based on a poset defined by Foreman \cite{Foreman:SmokeAndMirrors}.

\begin{defn}
\label{def:ForcingDSRP}
Let $\kappa \ge \omega_2$ be regular, $\vec{T} = \langle T_i \ : \ i < \omega_1 \rangle$ be sequence of pairwise disjoint stationary subsets of $\omega_1$, and let $\mathcal{S}$ be a nonempty collection of stationary subsets of $[H_\kappa]^\omega$. 
The poset $\mathbb{P}^{\DSRP}_{\mathcal{S},\vT}$ consists of conditions of the form
\[
p = \kla{\vec{Q}^p,\vec{S}^p}
\]
where, for some $\delta^p,\lambda^p<\omega_1$:
\begin{enumerate}
 \item $\vec{Q}^p=\seq{Q^p_\alpha}{\alpha\le\delta^p}$ is a continuous $\in$-chain of elements of $[H_\kappa]^\omega$. 
 \item $\vec{S}^p=\seq{S^p_i}{i<\lambda^p}$ is a sequence such that for every $i<\lambda^p$, $S^p_i\in\mathcal{S}$.
 \item Whenever $\alpha\le\delta^p$ and $i<\omega_1$ are such that $\alpha\in T_i$, then $i<\lambda^p$ and $Q^p_\alpha\in S^p_i$.
\end{enumerate}
The ordering is by extension of functions in both coordinates.
\end{defn}

Let us note some basic properties of this forcing notion.

\begin{fact}
\label{fact:PropertiesOfP-DSRP}
Let $\kappa$ be an uncountable regular cardinal, $\leer\neq\mathcal{S}\sub\power([H_\kappa]^\omega)$ a collection of stationary subsets, and $\vT$ an $\omega_1$-sequence of pairwise disjoint stationary subsets of $\omega_1$, and let $\P=\P^\DSRP_{\mathcal{S},\vT}$. Then
\begin{enumerate}[label=(\arabic*)]
  \item
  \label{item:GenericSequenceHasLengthOmega1-DSRP}
  for every countable ordinal $\gamma$, the set of conditions $p$ with $\delta^p,\lambda^p\ge\gamma$ is dense in $\P_S$,
  \item
  \label{item:UnionOfGenericIsExhaustiveInFirstCoordinate}
  for every $a\in H_\kappa$, the set of conditions $p$ such that there is an $\alpha\le\delta^p$ with $a\in Q^p_\alpha$ is dense in $\P$,
  \item
  \label{item:UnionOfGenericIsExhaustiveInSecondCoordinate}
  for every $S\in\mathcal{S}$, the set of conditions $p$ such that there is an $i<\lambda^p$ such that $S^p_i=S$ is dense.
\end{enumerate}
\end{fact}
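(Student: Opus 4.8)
The three density claims are all of the same flavor: given an arbitrary condition $p=\kla{\vQ^p,\vS^p}$, we want to extend it so as to meet a prescribed requirement. The only subtlety is that when we lengthen $\vQ^p$ we must respect clause (3) in Definition \ref{def:ForcingDSRP}, which ties the membership $Q^p_\alpha\in S^p_i$ to $\alpha\in T_i$; so we will need to enlarge $\vS^p$ as well and, crucially, we will need to know that the relevant sets $S^p_i\in\mathcal{S}$ are stationary in $[H_\kappa]^\omega$ in order to find a countable $Q\in[H_\kappa]^\omega$ that simultaneously lies above $Q^p_{\delta^p}$, contains whatever finite data we wish to absorb, and belongs to all the finitely many $S^p_i$ dictated by the partition $\vT$. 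This is the one place where stationarity of the members of $\mathcal{S}$ (as opposed to mere nonemptiness of $\mathcal{S}$) is used, and it is the main point to get right.

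\textbf{Claim \ref{item:GenericSequenceHasLengthOmega1-DSRP}.} Fix a countable $\gamma$ and a condition $p$; I will extend $p$ to a condition $q$ with $\delta^q,\lambda^q\ge\gamma$. First extend the second coordinate freely: since $\mathcal{S}\neq\leer$, pick any $S\in\mathcal{S}$ and set $S^q_i=S^p_i$ for $i<\lambda^p$ and $S^q_i=S$ for $\lambda^p\le i<\gamma$; this settles $\lambda^q:=\max(\lambda^p,\gamma)$. Now build $\vQ^q$ by recursion on $\alpha$ from $\delta^p$ up to $\gamma$, taking unions at limits (which stays in $[H_\kappa]^\omega$ since $\omega_1$ is regular and the chain is $\subseteq$-increasing), and at successor stages $\alpha$ choosing $Q^q_\alpha\in[H_\kappa]^\omega$ with $Q^q_{\alpha-1}\in Q^q_\alpha$ and, in addition, $Q^q_\alpha\in S^q_i$ for the unique $i$ (if any) with $\alpha\in T_i$ — such an $i$ is unique because the $T_i$ are pairwise disjoint, and such a $Q^q_\alpha$ exists because $S^q_i$ is stationary in $[H_\kappa]^\omega$, so in particular the club of $x\in[H_\kappa]^\omega$ with $Q^q_{\alpha-1}\in x$ meets $S^q_i$. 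Here one should note $i<\gamma\le\lambda^q$ when $\alpha<\gamma$, so clause (3) is met; set $\delta^q:=\gamma$. Then $q\le p$ is as desired.

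\textbf{Claims \ref{item:UnionOfGenericIsExhaustiveInSecondCoordinate} and \ref{item:UnionOfGenericIsExhaustiveInFirstCoordinate}.} For \ref{item:UnionOfGenericIsExhaustiveInSecondCoordinate}, given $S\in\mathcal{S}$ and a condition $p$, simply append $S$ to the second coordinate: put $S^q_{\lambda^p}=S$, leave $\vQ^p$ and $\delta^p$ unchanged, and note clause (3) is unaffected since no new index $\alpha\le\delta^p$ is involved. For \ref{item:UnionOfGenericIsExhaustiveInFirstCoordinate}, given $a\in H_\kappa$ and a condition $p$, if $a\in Q^p_\alpha$ for some $\alpha\le\delta^p$ we are done; otherwise extend $\vQ^p$ by one step, choosing $Q^q_{\delta^p+1}\in[H_\kappa]^\omega$ with $Q^p_{\delta^p}\in Q^q_{\delta^p+1}$, $a\in Q^q_{\delta^p+1}$, and — if $\delta^p+1\in T_i$ for some (unique) $i$ — also $Q^q_{\delta^p+1}\in S^q_i$, first enlarging the second coordinate to make $\lambda^q>i$ if necessary (as in Claim \ref{item:GenericSequenceHasLengthOmega1-DSRP}, using that $\mathcal{S}\neq\leer$), and using stationarity of $S^q_i$ to find such a $Q^q_{\delta^p+1}$. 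Again $q\le p$ works.

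\textbf{Main obstacle.} There is no deep obstacle here; the routine care needed is exactly the bookkeeping at successor stages in Claim \ref{item:GenericSequenceHasLengthOmega1-DSRP} — making sure that before one is asked to place $Q_\alpha$ into some $S_i$ the index $i$ is already available in the (already-determined) second coordinate, and invoking stationarity of the members of $\mathcal{S}$ at each such stage. Since the $T_i$ partition is fixed in advance and the second coordinate of $q$ is chosen before the first, this is handled cleanly by determining $\lambda^q$ first and then running the recursion on $\alpha$.
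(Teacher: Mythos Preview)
Your argument for Claim~\ref{item:GenericSequenceHasLengthOmega1-DSRP} has a genuine gap at limit stages, and this is precisely the main difficulty in the proof. You build $\vQ^q$ by recursion, ``taking unions at limits'' and choosing freely at successors. But clause~(3) of Definition~\ref{def:ForcingDSRP} must hold at \emph{every} $\alpha\le\delta^q$, including limit $\alpha$. If $\alpha$ is a limit ordinal with $\alpha\in T_i$ for some $i$ (and such $\alpha$ certainly exist below a large enough $\gamma$, since each $T_i$ is stationary in $\omega_1$ and hence consists almost entirely of limit ordinals), then $Q^q_\alpha=\bigcup_{\beta<\alpha}Q^q_\beta$ is \emph{forced} by the continuity requirement; you have no freedom left, and there is no reason this union should land in $S^q_i$. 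Stationarity of $S^q_i$ does not help here, because you are not intersecting with a club---you are handed a specific set and asked whether it lies in $S^q_i$.

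A secondary error: your claim that ``$i<\gamma\le\lambda^q$ when $\alpha<\gamma$'' is false. The sequence $\vT$ is just an $\omega_1$-sequence of pairwise disjoint stationary sets; there is no relation whatsoever between an ordinal $\alpha$ and the index $i$ for which $\alpha\in T_i$. One can have $\alpha\in T_i$ with $i$ arbitrarily large below $\omega_1$. This is easily patched (let $\lambda^q$ dominate the finitely or countably many relevant $i$'s), but your stated justification is wrong.

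The paper handles the limit-stage obstacle by a substantially more involved argument: it first proves a Friedman-type result (Fact~1) that, given any $\omega_1$-sequence of stationary subsets of $\omega_1$ and any prescription $t:\omega_1\to\omega_1$, one can find a normal function of any countable length whose $\xi$-th value lies in $A_{t(\xi)}$. It then (Fact~2) forces with the $\sigma$-closed poset adding a generic continuous $\in$-chain of length $\omega_1$; in the extension each $S_i$ remains stationary, so the set of $\alpha$ with $Q_\alpha\in S_i$ is stationary, and Fact~1 yields a normal $f$ of length $\gamma$ with $Q_{f(\alpha)}\in S_{t(\alpha)}$ for all $\alpha$. The reindexed chain $\langle Q_{f(\alpha)}\rangle$ is continuous (by normality of $f$) and lies in $\V$ (by $\sigma$-distributivity). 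This is how one arranges the limit stages to fall into the right stationary sets---it cannot be done by a naive stage-by-stage recursion.
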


\begin{proof}
We need some facts before being able to prove this. The first fact is a generalization of a result from \cite{HFriedman:OnClosedSetsOfOrdinals}.

\medskip

\noindent\emph{Fact 1:} Let $\seq{A_i}{i<\omega_1}$ be a sequence of stationary subsets of $\omega_1$, and let $t:\omega_1\To\omega_1$ be a function. Then, for any $\beta,\alpha<\omega_1$, with $\alpha>0$, there is a normal function $f:\alpha\To\omega_1$ such that for all $\xi<\alpha$, $f(\xi)\in A_{t(\xi)}$ and $f(0)>\beta$.

\begin{proof}[Proof of Fact 1:]
Let $\gamma,\alpha,\sigma$ be countable ordinals, $\alpha$ a limit. Say that $\gamma$ is $\alpha$-approachable from $\sigma$ if for every $\delta<\gamma$, there is a normal function $f:[\sigma,\sigma+\alpha]\To[\delta,\gamma]$ such that for all $\xi\in[\sigma,\sigma+\alpha]$, $f(\xi)\in A_{t(\xi)}$ and $f(\sigma+\alpha)=\gamma$. We refer to such a function as a nice function from $[\sigma,\sigma+\alpha]$ to $[\delta,\gamma]$.

We will prove by induction on limit ordinals $\alpha<\omega_1$: for every $\sigma<\alpha$, the set of $\lambda<\omega_1$ such that $\lambda$ is $\alpha$-approachable from $\sigma$ is unbounded in $\omega_1$. This clearly proves Fact 1.

If $\alpha=\omega$, then let $\sigma<\omega_1$ be given. Fixing any $\beta<\omega_1$, we have to find a countable $\lambda\ge\beta$ that is $\alpha$-approachable from $\sigma$. To this end, let
\[\lambda\in \left(A_{t(\sigma+\omega)}\cap\bigcap_{\sigma\le\xi<\sigma+\omega}\Lim(A_{t(\xi)})\right)\ohne\beta.\]
Given any $\delta<\lambda$, it is then easy to define $f:[\sigma,\sigma+\omega]\To[\beta,\lambda]$ recursively so that $f$ is strictly increasing, $f(0)>\delta$, for $\xi<\omega$, $f(\xi)\in A_{t(\xi)}$, and $\sup\{f(\sigma+n)\st n<\omega\}=\lambda$. Thus, setting $f(\sigma+\omega)=\lambda$ yields a nice function from $[\sigma,\sigma+\omega]$ to $[\delta,\lambda]$, as wished.

Now suppose this has been proven for $\alpha$. We have to show the claim for $\alpha+\omega$. To this end, fix $\sigma<\omega_1$. Given an arbitrary $\beta<\omega_1$, we have to find a countable $\lambda\ge\beta$ which is $\alpha+\omega$-approachable from $\sigma$. Let $D=\{\gamma<\omega_1\st\gamma\ \text{is $\alpha$-approachable from $\sigma$}\}$. Inductively, this set is unbounded in $\omega_1$. Let
\[\lambda\in\left(A_{t(\sigma+\alpha+\omega)}\cap\Lim(D)\cap\bigcap_{\sigma+\alpha\le\xi<\sigma+\alpha+\omega}\Lim(A_{t(\xi)})\right)\ohne\beta.\]
To see that $\lambda$ is $\alpha+\omega$-approachable from $\sigma$, let $\delta<\lambda$. Let $\gamma\in (D\cap\lambda)\ohne(\delta+1)$. Since $\gamma$ is $\alpha$-approachable from $\sigma$, there is a nice function $\barf$ from $[\sigma,\sigma+\alpha]$ to $[\delta,\gamma]$. As in the case $\alpha=\omega$, we can extend $\barf$ to a normal and cofinal function $\barf':[\sigma,\sigma+\alpha+\omega)\To\lambda$, such that for each $n<\omega$, $\barf'(\sigma+\alpha+n)\in A_{t(\sigma+\alpha+n)}$. Since $\lambda=\sup\ran(\barf')$ and $\lambda\in A_{t(\sigma+\alpha+\omega)}$, we can extend $\barf'$ to a nice function from $[\sigma,\sigma+\alpha+\omega]$ to $[\delta,\lambda]$ by specifying that $f(\sigma+\alpha+\omega)=\lambda$.

Finally, suppose $\alpha$ is a limit of limit ordinals, and the claim has been proven for all limit ordinals below $\alpha$. Fixing $\sigma,\beta<\omega_1$, we have to find a countable $\lambda>\beta$ which is $\alpha$-approachable from $\sigma$. Let $\seq{s_n}{n<\omega}$ be increasing and cofinal in $\alpha$, $s_0=0$. Let $\sigma_n=\sigma+s_n$. Let
\[D_n=\{\gamma<\omega_1\st\gamma\ \text{is $(s_{n+1}-s_n)$-approachable from $\sigma_n$}\}.\]
Inductively, $D_n$ is unbounded in $\omega_1$, for each $n<\omega$. Let
\[\lambda\in\left(A_{t(\sigma+\alpha)}\cap\bigcap_{n<\omega}\Lim(D_n)\right)\ohne\beta.\]
To see that $\lambda$ is $\alpha$-approachable from $\sigma$, fix $\delta<\lambda$. Find an increasing sequence $\seq{\delta_n}{n<\omega}$ cofinal in $\lambda$ such that $\delta_0>\delta$ and $\delta_n\in D_n$, for all $n<\omega$. Using the definition of $D_n$, we can now find a sequence of functions $\seq{f_n}{n<\omega}$ such that
\begin{itemize}
  \item $f_n$ is a nice function from $[\sigma_n,\sigma_{n+1}]$ to $[\delta_n,\delta_{n+1}]$.
  \item $f_{n+1}(\sigma_{n+1})=\delta_{n+1}=f_n(\sigma_{n+1})$.
\end{itemize}
Thus, the union $\bigcup_{n<\omega}f_n$ is a function that can be extended to a nice function from $[\sigma,\sigma+\alpha]$ to $[\delta,\lambda]$ by mapping $\sigma+\alpha$ to $\lambda$.
\end{proof}

\noindent\emph{Fact 2:} If $\seq{S_i}{i<\omega_1}$ is a sequence of stationary subsets of $[H_\kappa]^\omega$ and $\seq{T_i}{i<\omega_1}$ is a sequence of pairwise disjoint stationary subsets of $\omega_1$, then
for any $\gamma<\omega_1$, there is a continuous $\in$-chain $\seq{Q_\alpha}{\alpha<\gamma}$ such that for all $\alpha<\gamma$, if $\alpha\in T_i$, then $Q_\alpha\in S_i$.

\begin{proof}[Proof of Fact 2:] This is a strengthening of \cite[Lemma 1.2]{FengJech:ProjectiveStationarityAndSRP}, and the proof of that lemma can be adapted to the present situation.
Let $\Q$ be the forcing to add a continuous $\in$-chain of countable subsets of $H_\kappa$, of length $\omega_1$, by initial segments of successor length. This forcing is $\sigma$-closed. Let $\seq{Q_\alpha}{\alpha<\omega_1}$ be a sequence added by $\Q$, i.e., $\vec{Q}=\bigcup G$, for some $\Q$-generic $G$. In $V[G]$, every $S_i$ is still stationary, so the set $A_i=\{\alpha<\omega_1\st Q_\alpha\in S_i\}$ is stationary in $V[G]$. So by Fact 1, applied in $V[G]$ to the function $t:\omega_1\To\omega_1$ defined by
\[t(\xi)=\left\{
\begin{array}{l@{\qquad}l}
i & \text{if $\xi\in T_i$,}\\
0 & \text{if $\xi\in\omega_1\ohne\bigcup_{j<\omega_1}T_j$.}
\end{array}
\right.\]
there is a normal function $f:\gamma\To\omega_1$ such that for all $\alpha<\gamma$, $f(\alpha)\in A_{t(\alpha)}$, which means that if $\alpha\in T_i$, then $f(\alpha)\in A_i$, and this means that $Q_{f(\alpha)}\in S_i$. So, the sequence $\vec{Q}'=\seq{Q_{f(\alpha)}}{\alpha<\gamma}$ is as wished, and it belongs to $V$, since $\Q$ is countably distributive.
\end{proof}

We can now prove clauses \ref{item:GenericSequenceHasLengthOmega1-DSRP}, \ref{item:UnionOfGenericIsExhaustiveInFirstCoordinate} and \ref{item:UnionOfGenericIsExhaustiveInSecondCoordinate} simultaneously. Fix a condition $p\in\P_S$, and let $\gamma<\omega_1$, $a\in H_\kappa$ and $S\in\mathcal{S}$ be given. We may assume that $\gamma>\delta^p$. We may also assume that $\delta^p+1\in\bigcup_{i<\omega_1}T_i$, for if not, then we may just define $\vT'$ to be like $\vT$, except that $\delta^p+1\in T'_0$, say. Then $p\in\P^\DSRP_{\mathcal{S},\vT'}$, and an extension of $p$ in $\P^\DSRP_{\mathcal{S},\vT'}$ with the desired properties is also an extension of $p$ in $\P^\DSRP_{\mathcal{S},\vT}$. So let's let $i_0$ be such that $\delta^p+1\in T_{i_0}$.

Since it is trivial to extend the second coordinate of a condition, we may assume that $\lambda^p>\gamma$, that for every $\alpha\le\gamma$, if $i<\omega_1$ is such that $\alpha\in T_i$, then $i<\lambda^p$, and that there is some $i<\lambda^p$ such that $S^p_i=S$, taking care of clause \ref{item:UnionOfGenericIsExhaustiveInSecondCoordinate}. In order to be able to use Fact 2 now, we have to perform a little index translation, shifting by $\delta^p+1$. Thus, let's define a sequence $\vT'=\seq{T'_i}{i<\omega_1}$ by letting $T'_i=\{\xi<\omega_1\st(\delta^p+1)+\xi\in T_i\}$. Let's also define $\vS=\seq{S_i}{i<\omega_1}$ by
\[S_i=\left\{
\begin{array}{l@{\qquad}l}
S^p_i&\text{if}\ i<\lambda^p,\ i\neq i_0,\\
\{x\in S^p_{i_0}\st \{a, Q^p_{\delta^p}\}\sub x\} & \text{if}\ i=i_0,\\
S^p_0&\text{if}\ i\ge\lambda^p.
\end{array}
\right.
\]
Clearly, $\vS$ is a sequence of stationary subsets of $[H_\kappa]^\omega$, and $\vT'$ is a sequence of pairwise disjoint stationary subsets of $\omega_1$, so we may apply Fact 2 to give us a continuous $\in$-chain $\seq{R_\xi}{\xi<\bgamma}$, where $\bgamma=(\gamma+1)-(\delta^p+1)$, such that for all $i<\bgamma$, if $i\in T'_j$, then $R_i\in S_j$. The condition $q=\kla{\vQ^p\verl\vR,\vS^p}$ is then an extension of $p$ with all the desired properties. This is because for $i<\bgamma$, if $\delta^p+1+i\in T_j$, then $i\in T'_j$, so that $Q^q_{\delta^p+1+i}=R_i\in S_j=S^q_j$, and in particular, $Q^q_{\delta^p}=Q^p_{\delta^p}\in Q^q_{\delta^p+1}$, as $Q^q_{\delta^p+1}\in S_{i_0}$.
\end{proof}

The following lemma is an immediate consequence of Fact \ref{fact:PropertiesOfP-DSRP}.

\begin{lem}
\label{lem:PropertiesOfTheGeneric}
Let $G$ be generic for $\mathbb{P}^{\DSRP}_{\mathcal{S},\vec{T}}$, where $\mathcal{S}$ is a nonempty collection of stationary subsets of $[H_\kappa]^\omega$ and $\vT$ is an $\omega_1$-sequence of pairwise disjoint stationary subsets of $\omega_1$.
Let $\vec{Q}=\bigcup_{p\in G}\vQ^p$ and $\vS=\bigcup_{p\in G}\vS^p$. Then
\begin{enumerate}
  \item $\vec{Q}$ is a continuous $\in$-chain of length $\omega_1^V$ whose union is $H_\kappa^V$.
  \item $\vS$ is a sequence of length $\omega_1^V$, and $\mathcal{S}=\{S_i\st i<\omega_1^V\}$.
  \item For all $i<\omega_1$ and all $\alpha\in T_i$, we have that $Q_\alpha\in S_i$.
\end{enumerate}
\end{lem}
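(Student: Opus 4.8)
The statement to prove is Lemma \ref{lem:PropertiesOfTheGeneric}, which the paper itself flags as "an immediate consequence of Fact \ref{fact:PropertiesOfP-DSRP}." So the proof is essentially a density-argument bookkeeping exercise, and the plan is to derive each of the three clauses from the corresponding density statement in Fact \ref{fact:PropertiesOfP-DSRP}, together with the basic structural features of conditions in $\P^\DSRP_{\mathcal{S},\vT}$ from Definition \ref{def:ForcingDSRP}.

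First I would set up notation: let $G$ be generic, put $\vec{Q}=\bigcup_{p\in G}\vQ^p$ and $\vS=\bigcup_{p\in G}\vS^p$. These are well-defined functions because $G$ is a filter and the ordering on $\P$ extends conditions coordinatewise, so any two conditions in $G$ have a common extension in $G$ and hence agree on the overlap of their domains. For clause (1): by Fact \ref{fact:PropertiesOfP-DSRP}\ref{item:GenericSequenceHasLengthOmega1-DSRP}, for each countable $\gamma$ the set of $p$ with $\delta^p\ge\gamma$ is dense, so $G$ meets it, whence $\dom(\vec{Q})\supseteq\omega_1$; since each $\delta^p<\omega_1$, the domain is exactly $\omega_1^V$. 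Each $\vQ^p$ is a continuous $\in$-chain by Definition \ref{def:ForcingDSRP}(1), and these properties are preserved under the directed union, so $\vec{Q}$ is a continuous $\in$-chain of elements of $[H_\kappa]^\omega$. That its union is all of $H_\kappa^V$ follows from Fact \ref{fact:PropertiesOfP-DSRP}\ref{item:UnionOfGenericIsExhaustiveInFirstCoordinate}: for every $a\in H_\kappa$ the set of $p$ with $a\in Q^p_\alpha$ for some $\alpha\le\delta^p$ is dense, so $a\in\bigcup_{\alpha<\omega_1}Q_\alpha$. Clause (2) is analogous: by Fact \ref{fact:PropertiesOfP-DSRP}\ref{item:GenericSequenceHasLengthOmega1-DSRP} again (the $\lambda^p\ge\gamma$ part), $\dom(\vS)=\omega_1^V$; each $S^p_i\in\mathcal{S}$ by Definition \ref{def:ForcingDSRP}(2), so $\{S_i\st i<\omega_1\}\subseteq\mathcal{S}$; and by Fact \ref{fact:PropertiesOfP-DSRP}\ref{item:UnionOfGenericIsExhaustiveInSecondCoordinate}, for every $S\in\mathcal{S}$ the set of $p$ with $S^p_i=S$ for some $i<\lambda^p$ is dense, giving the reverse inclusion. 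Clause (3) requires no density argument at all: it is just the passage to the limit of Definition \ref{def:ForcingDSRP}(3). If $\alpha\in T_i$ and $\alpha<\omega_1$, pick (by clause (1)) some $p\in G$ with $\alpha\le\delta^p$; then Definition \ref{def:ForcingDSRP}(3) gives $i<\lambda^p$ and $Q^p_\alpha\in S^p_i$, i.e.\ $Q_\alpha\in S_i$.

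I do not expect any serious obstacle here; the only points needing a word of care are (a) checking that the directed union of continuous $\in$-chains is again a continuous $\in$-chain — at a limit $\lambda<\omega_1$, take $p\in G$ with $\delta^p>\lambda$, and then $Q_\lambda=Q^p_\lambda=\bigcup_{i<\lambda}Q^p_i=\bigcup_{i<\lambda}Q_i$ by Definition \ref{def:ForcingDSRP}(1) applied inside the single condition $p$; and (b) making sure $\vS$ is genuinely a function, which is immediate from $G$ being a filter and the coordinatewise ordering. Everything else is a direct quotation of Fact \ref{fact:PropertiesOfP-DSRP}. I would therefore write the proof in three short labeled paragraphs, one per clause, each reducing to the named clause of Fact \ref{fact:PropertiesOfP-DSRP} plus genericity of $G$.
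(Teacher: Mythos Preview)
Your proposal is correct and takes exactly the approach the paper indicates: the paper gives no proof beyond the remark that the lemma ``is an immediate consequence of Fact~\ref{fact:PropertiesOfP-DSRP},'' and your three paragraphs simply unpack that immediacy by matching each clause to the relevant density statement and to Definition~\ref{def:ForcingDSRP}. The care points you flag (directedness of $G$ making $\vQ$ and $\vS$ well-defined functions, and continuity at limits being verified inside a single condition $p\in G$ with $\delta^p$ large enough) are exactly the routine checks needed, and nothing is missing.
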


We should now define the instances of the diagonal strong reflection principle.

\begin{defn}
\label{def:DSRPofSAndT}
Let $\mathcal{S}$ be a collection of stationary subsets of $[H_\kappa]^\omega$, where $\kappa>\omega_1$ is regular, $\vT$ is an $\omega_1$-sequence of pairwise disjoint stationary subsets of $\omega_1$, and $\theta$ a sufficiently large regular cardinal (so that $\mathcal{S}\sub H_\theta$, that is, $\theta>2^{{<}\kappa}$). Then the \emph{diagonal strong reflection principle for $\kla{\mathcal{S},\vT}$,} $\DSRP(\mathcal{S},\vT)$, says that
\[\{X\in[H_\theta]^{\omega_1}\st\omega_1\sub X\ \text{and there is a diagonal chain through $\mathcal{S}$ up to $X$ wrt.~$\vT$}\}\]
is stationary in $H_\theta$.

The \emph{diagonal strong reflection principle for $\mathcal{S}$}, $\DSRP(\mathcal{S})$, says that
\[\{X\in[H_\theta]^{\omega_1}\st\omega_1\sub X\ \text{and there is a diagonal chain through $\mathcal{S}$ up to $X$}\}\]
is stationary in $H_\theta$.

The \emph{exact diagonal strong reflection principle for $\mathcal{S}$}, $\eDSRP(\mathcal{S})$, says that
\[\{X\in[H_\theta]^{\omega_1}\st\omega_1\sub X\ \text{and there is an exact diagonal chain through $\mathcal{S}$ up to $X$}\}\]
is stationary in $H_\theta$.
\end{defn}

\begin{remark}
\label{remark:GettingExactDSRPwhenTisTotal}
Let $\mathcal{S}$, $\kappa$ and $\vT$ be as in Definition \ref{def:DSRPofSAndT}. Then we have the following implications:
\begin{enumerate}
\item $\DSRP(\mathcal{S},\vT)\implies\DSRP(\mathcal{S})$
\item If $\bigcup_{i<\omega_1}T_i=\omega_1$, then $\DSRP(\mathcal{S},\vT)\implies\eDSRP(\mathcal{S})$
\end{enumerate}
\end{remark}

\begin{lem}
\label{lem:FA(P-DSRP)impliesDSRPatSAndT}
Let $\mathcal{S}$ be a nonempty collection of stationary subsets of $[H_\kappa]^\omega$, and let $\vT$ be an $\omega_1$-sequence of pairwise disjoint stationary subsets of $\omega_1$ such that $\FA(\{\P^{\DSRP}_{\mathcal{S},\vT}\})$ holds. Then $\DSRP(\mathcal{S},\vT)$ holds.
\end{lem}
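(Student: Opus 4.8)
The plan is to feed $\FA(\{\P^{\DSRP}_{\mathcal{S},\vT}\})$ a carefully chosen family of $\omega_1$ dense sets so that the generic filter produces a diagonal chain whose union is the $H_\kappa$-trace of a suitable elementary submodel of size $\omega_1$. Fix $\theta$ as in Definition~\ref{def:DSRPofSAndT}, a well-order $<^*$ of $H_\theta$, and an arbitrary function $F\colon[H_\theta]^{<\omega}\To H_\theta$; put $\mathfrak{A}=\kla{H_\theta,\in,<^*,\mathcal{S},\vT,F}$ and $\P=\P^{\DSRP}_{\mathcal{S},\vT}\in H_\theta$, and let $\mathfrak{a}=\kla{H_\kappa,\in,<^*\rest H_\kappa}$. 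It suffices to produce $X\prec\mathfrak{A}$ with $\omega_1\sub X$ and $|X|=\omega_1$ that admits a diagonal chain through $\mathcal{S}$ up to $X$ with respect to $\vT$: then $X$ witnesses that the set in Definition~\ref{def:DSRPofSAndT} meets the club of algebras closed under $F$, and since $F$ was arbitrary, $\DSRP(\mathcal{S},\vT)$ follows. We first assume $|\mathcal{S}|\le\omega_1$; the general case is treated at the end.

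Choose the following at most $\omega_1$ dense subsets of $\P$. For $\gamma<\omega_1$, let $E_\gamma=\{p\st\delta^p,\lambda^p\ge\gamma\}$, and for $S\in\mathcal{S}$ let $F_S=\{p\st S\in\ran(\vS^p)\}$; these are dense by Fact~\ref{fact:PropertiesOfP-DSRP}. For $\gamma<\omega_1$, let $D_\gamma$ be the set of $p$ with $\lambda^p>\delta^p\ge\gamma$ for which some $\alpha\in[\gamma,\delta^p]$ satisfies: $Q^p_\alpha\prec\mathfrak{a}$, $\vQ^p\rest\alpha\in Q^p_\alpha$, and $Q^p_\alpha$ is closed under every map $\vx\mapsto F(\vx,\vd)$, where $\vd$ ranges over finite tuples with entries in $\{\kappa,\theta,<^*,F,\mathcal{S},\vT\}\cup\ran(\vS^p)$ and $\vx$ over tuples from $Q^p_\alpha$ with $F(\vx,\vd)\in H_\kappa$. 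Density of $D_\gamma$ is shown as in the proof of Fact~\ref{fact:PropertiesOfP-DSRP}: given $p$, extend it so that $\lambda^p>\delta^p\ge\gamma$; the set of $x\in[H_\kappa]^\omega$ with $x\prec\mathfrak{a}$, $Q^p_{\delta^p},\vQ^p\in x$, $\bigcup_{\beta\le\delta^p}Q^p_\beta\sub x$, and $x$ closed under the \emph{countably many} relevant $F$-maps (countable because $\lambda^p<\omega_1$) is club in $[H_\kappa]^\omega$; picking $i_0$ with $\delta^p+1\in T_{i_0}$ (after a harmless enlargement of $\vT$ if necessary, as in that proof) and using that $S^p_{i_0}$ is stationary, one finds such an $x$ in $S^p_{i_0}$ and appends it as $Q_{\delta^p+1}$.

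Apply $\FA(\{\P\})$ to this family to obtain a filter $G$, and set $\vQ=\bigcup_{p\in G}\vQ^p$, $\vS=\bigcup_{p\in G}\vS^p$. By the $E_\gamma$, the $F_S$, and Lemma~\ref{lem:PropertiesOfTheGeneric}, $\vQ$ is a continuous $\in$-chain of length $\omega_1$, $\vS$ is a sequence of length $\omega_1$ with $\ran(\vS)=\mathcal{S}$, and $Q_\alpha\in S_i$ whenever $\alpha\in T_i$. By the $D_\gamma$, the set $C_0$ of $\alpha<\omega_1$ with $Q_\alpha\prec\mathfrak{a}$, $\vQ\rest\alpha\in Q_\alpha$, and $Q_\alpha$ closed under $\vx\mapsto F(\vx,\vd)$ for all finite $\vd$ from $\{\kappa,\theta,<^*,F,\mathcal{S},\vT\}\cup\{S_i\st i<\alpha\}$ (with value in $H_\kappa$) is unbounded in $\omega_1$. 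Let $M=\bigcup_{\alpha<\omega_1}Q_\alpha$. For $\alpha\in C_0$ the ordinal $\alpha=\dom(\vQ\rest\alpha)$ lies in $Q_\alpha$, so $Q_\alpha\cap\omega_1>\alpha$, and since $C_0$ is unbounded, $\omega_1\sub M$. As $M$ is an increasing union over $C_0$ of submodels of $\mathfrak{a}$, $M\prec\mathfrak{a}$; and for any finite $\vd$ from $\{\kappa,\theta,<^*,F,\mathcal{S},\vT\}\cup\mathcal{S}$ and any $\vec m$ from $M$, taking $\alpha\in C_0$ above the $\vS$-indices of the entries of $\vd$ lying in $\mathcal{S}$ and large enough that $\vec m\in Q_\alpha$ shows $F(\vec m,\vd)\in M$ whenever $F(\vec m,\vd)\in H_\kappa$. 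Also $\vQ\rest\alpha\in M$ for every $\alpha<\omega_1$: pick $\alpha'\in C_0$ with $\alpha'>\alpha$; then $\alpha\in Q_{\alpha'}$ and $\vQ\rest\alpha=(\vQ\rest\alpha')\rest\alpha\in Q_{\alpha'}$; similarly $Q_\xi\in M$ for all $\xi<\omega_1$. Now set
\[X=\Hull^{\mathfrak{A}}\bigl(M\cup\mathcal{S}\cup\{\kappa,\theta,<^*,F,\mathcal{S},\vT\}\bigr).\]
Then $X\prec\mathfrak{A}$, hence closed under $F$, $\omega_1\sub M\sub X$, $|X|=\omega_1$, and $\mathcal{S}=\ran(\vS)\sub X$, so $X\cap\mathcal{S}=\mathcal{S}=\ran(\vS)$. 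Finally $X\cap H_\kappa=M$: by induction on Skolem terms, any value $\tau^{\mathfrak{A}}(\vec m,\vd)\in H_\kappa$ with $\vec m$ from $M$ and $\vd$ from $\{\kappa,\theta,<^*,F,\mathcal{S},\vT\}\cup\mathcal{S}$ already lies in $M$, using that $M\prec\mathfrak{a}$, that $M$ is closed under the maps $\vx\mapsto F(\vx,\vd)$, that $\vQ$ and $\vT$ take values in $M$ on their domains, and that none of $\kappa,\theta,<^*,F,\mathcal{S}$ nor any member of $\mathcal{S}$ is an element of $H_\kappa$. Consequently $\kla{\vQ,\vS}$ is a diagonal chain through $\mathcal{S}$ up to $X$ with respect to $\vT$ in the sense of Definition~\ref{def:DSRP-preliminiary}, which finishes the argument.

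The main obstacle is precisely the last verification, $X\cap H_\kappa=M$ and $X\cap\mathcal{S}=\ran(\vS)$: it is this that dictates building the generic chain so that unboundedly many of its initial segments are elementary in a structure on $H_\kappa$ already absorbing the effect of $F$ relative to the fixed parameters and to the (at each stage only countably many) sets $S_i$ enumerated so far, so that the density of $D_\gamma$ must be run through the stationarity of the members of $\mathcal{S}$, just as in Fact~\ref{fact:PropertiesOfP-DSRP}. When $|\mathcal{S}|>\omega_1$ one cannot force $\ran(\vS)=\mathcal{S}$; one then generates $X$ with $\ran(\vS)$ in place of $\mathcal{S}$ and additionally requires, inside $D_\gamma$, that $\ran(\vS^p)$ be closed under the countably many $\mathcal{S}$-valued Skolem terms of $\mathfrak{A}$ applied to $\bigcup_{\beta\le\delta^p}Q^p_\beta\cup\ran(\vS^p)$ together with the fixed parameters — which, when extending, also forces one to place the new chain members above $\delta^p$ into the finitely many newly demanded stationary sets, again by the argument of Fact~\ref{fact:PropertiesOfP-DSRP}. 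The resulting $C_0$ then makes $\ran(\vS)$ closed under that definability, whence $X\cap\mathcal{S}=\ran(\vS)$ as before.
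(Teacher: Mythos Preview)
Your approach is genuinely different from the paper's, and as written it contains a gap.

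The paper does not build explicit dense sets at all. Instead it invokes the standard consequence of $\FA(\{\P\})$ (the argument of \cite[Lemma~2.53]{Woodin:ADforcingAxiomsNonstationaryIdeal}): for any structure $\mathcal{A}$ on $H_\theta$ there is an $X\in[H_\theta]^{\omega_1}$ with $\omega_1\sub X$, $\mathcal{A}|X\prec\mathcal{A}$, and a filter $G$ that is $(X,\P)$-generic, i.e., meets $D\cap X$ for every dense $D\in X$. The point is that $X$ is produced \emph{first}, and then the conditions $H_\kappa\cap X=\bigcup_\alpha Q_\alpha$ and $\mathcal{S}\cap X=\{S_i\st i<\omega_1\}$ follow immediately: for each $a\in X\cap H_\kappa$ or $S\in X\cap\mathcal{S}$ the corresponding dense set from Fact~\ref{fact:PropertiesOfP-DSRP} lies in $X$, so is met by $G$ inside $X$; conversely every initial segment $\vQ\rest\alpha$ and each $S_i$ is read off a condition in $G\cap X$ and hence is in $X$. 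No hull construction or case split on $|\mathcal{S}|$ is needed.

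In your argument, you build $M=\bigcup_\alpha Q_\alpha$ from the filter and then define $X=\Hull^{\mathfrak{A}}(M\cup\mathcal{S}\cup\{\ldots\})$, so you must verify $X\cap H_\kappa=M$ after the fact. Your ``induction on Skolem terms'' for this does not go through: the dense sets $D_\gamma$ only guarantee that $M$ is closed under maps of the form $\vx\mapsto F(\vx,\vd)$, but the Skolem terms $\tau^{\mathfrak{A}}$ are the \emph{$<^*$-least-witness} functions of the full structure $\mathfrak{A}$, which are not built from $F$. A term like ``the $<^*$-least $y\in H_\kappa$ such that $\exists z\notin H_\kappa\,\psi(y,z,\vec m,\vd)$'' need not land in $M$ merely because $M\prec\mathfrak{a}$ and $M$ is $F$-closed. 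The easy repair is to replace $F$ in the definition of $D_\gamma$ by the canonical Skolem function $h$ of $\mathfrak{A}$ itself (which is a fixed function once $<^*$ and the arbitrary $F$ are fixed); closure of $M$ under $h(\cdot,\vd)$ restricted to values in $H_\kappa$ then really does give $\Hull^{\mathfrak{A}}(M\cup\cdots)\cap H_\kappa\sub M$. With that correction your argument for $|\mathcal{S}|\le\omega_1$ can be completed, though the sketch for $|\mathcal{S}|>\omega_1$ would still need a careful dovetailing to ensure the $\mathcal{S}$-closure of $\ran(\vS)$. The paper's route via an $(X,\P)$-generic filter sidesteps all of this.
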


\begin{proof} Let $\P=\P^\DSRP_{\mathcal{S},\vT}$. Let $\theta$ be a sufficiently large regular cardinal, and let $\mathcal{A}=\kla{H_\theta,\in,\P,\mathcal{S},\vT,F,<^*}$, where $F$ is some function from $H_\theta^{{<}\omega}$ to $H_\theta$ and $<^*$ is a well-order of $H_\theta$. Let
\[A=\{X\in[H_\theta]^{\omega_1}\st\omega_1\sub X\ \text{and there is a diagonal chain through $\mathcal{S}$ up to $X$}\}.\]
To show that $A$ is stationary, it suffices to show that there is an $X\in A$ such that $\mathcal{A}|X\prec\mathcal{A}$ and $X\cap\omega_2\in\omega_2$; see \cite[Exercise 38.10]{ST3}. By the argument of the proof of \cite[Lemma 2.53]{Woodin:ADforcingAxiomsNonstationaryIdeal}, it follows from $\FA(\{\P\})$ that there is an $X\in[H_\theta]^{\omega_1}$ with $\omega_1\sub X$ and $\mathcal{A}|X\prec\mathcal{A}$, such that there is a $G$ which is ($X,\P$)-generic (meaning that $G$ is a filter in $\P$ such that for every dense subset $D\sub\P$ with $D\in X$, $G\cap D\cap X\neq\leer$). Since $\omega_1\sub X$, it follows that $X\cap\omega_2\in\omega_2$. To see that $X\in A$, let
let $\vec{Q}=\bigcup_{p\in G}\vQ^p$ and $\vS=\bigcup_{p\in G}\vS^p$. It then follows from Lemma \ref{lem:PropertiesOfTheGeneric} that $\kla{\vQ,\vS}$ is a diagonal chain through $\kla{X,\mathcal{S}}$ with respect to $\vT$. To see that for $\alpha<\omega_1$, $\vec{Q}\rest\alpha=\seq{Q_i}{i<\alpha}\in H_\kappa\cap X$, note that the set $D_\alpha$ of conditions $p\in\P$ with $\delta^p\ge\alpha$ is dense in $\P$ and an element of $X$. Hence, there is a $p\in G\cap D_\alpha\cap X$. The restriction of the first component of $p$ to $\alpha$ is then also in $X$, and it is the sequence $\vec{Q}\rest\alpha$.
\end{proof}

\begin{defn}
\label{def:Gamma-projectivestationaryForDSRP}
Let $\vec{T}$ be an $\omega_1$-sequence of pairwise disjoint stationary subsets of $\omega_1$, let $\kappa>\omega_1$ be regular, and let $\mathcal{S}\sub\power([H_\kappa]^\omega)$ be a nonempty family of stationary sets. Let $\Gamma$ be a forcing class. Then we say that $\kla{\mathcal{S},\vec{T}}$ is \emph{$\Gamma$-projective stationary} if $\P^{\DSRP}_{\mathcal{S},\vec{T}}\in\Gamma$. We say that $\mathcal{S}$ is \emph{$\Gamma$-projective stationary} if there is a $\vec{T}$ such that $\kla{\mathcal{S},\vec{T}}$ is $\Gamma$-projective stationary.
\end{defn}

Motivated by Lemma \ref{lem:FA(P-DSRP)impliesDSRPatSAndT}, it thus makes sense to define:

\begin{defn}
\label{def:Gamma-DSRP}
For a forcing class $\Gamma$ and a regular $\kappa>\omega_1$, the \emph{$\Gamma$-fragment of the diagonal strong reflection principle at $\kappa$}, $\Gamma$-$\DSRP(\kappa)$, says that whenever $\mathcal{S}$ is a collection of stationary subsets of $[H_\kappa]^\omega$ and $\vT$ is an $\omega_1$-sequence of pairwise disjoint stationary subsets of $\omega_1$ such that $\kla{\mathcal{S},\vT}$ is $\Gamma$-projective stationary, then $\DSRP(\mathcal{S},\vT)$ holds.



And generally, 
$\Gamma$-\DSRP says that
$\Gamma$-$\DSRP(\kappa)$ holds for every regular $\kappa>\omega_1$.

If $\Gamma=\SSP$, then we may omit mention of $\Gamma$. 
\end{defn}


Another way to express Lemma \ref{lem:FA(P-DSRP)impliesDSRPatSAndT} is as follows:

\begin{lem}
\label{lem:FA(Gamma)impliesGamma-DSRP}
Let $\Gamma$ be a forcing class. Then $\FA(\Gamma)$ implies $\Gamma$-\DSRP.
\end{lem}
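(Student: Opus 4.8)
This lemma is a pure bookkeeping consequence of Lemma~\ref{lem:FA(P-DSRP)impliesDSRPatSAndT} together with the definitions of $\Gamma$-projective stationarity (Definition~\ref{def:Gamma-projectivestationaryForDSRP}) and of $\Gamma$-\DSRP (Definition~\ref{def:Gamma-DSRP}); there is no real obstacle, and I expect the only thing requiring any care is making the quantifiers match up and noting that a forcing axiom for a class trivially specializes to a forcing axiom for any single poset in that class.

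\textbf{Step 1.} Fix a regular $\kappa>\omega_1$; it suffices to prove $\Gamma$-$\DSRP(\kappa)$, since $\Gamma$-\DSRP is just the conjunction of these over all such $\kappa$. So fix a (nonempty) collection $\mathcal{S}$ of stationary subsets of $[H_\kappa]^\omega$ and an $\omega_1$-sequence $\vT$ of pairwise disjoint stationary subsets of $\omega_1$ such that $\kla{\mathcal{S},\vT}$ is $\Gamma$-projective stationary. By Definition~\ref{def:Gamma-projectivestationaryForDSRP}, this means precisely that $\P:=\P^\DSRP_{\mathcal{S},\vT}\in\Gamma$.

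\textbf{Step 2.} Since $\P\in\Gamma$, the hypothesis $\FA(\Gamma)$ immediately yields $\FA(\{\P\})$: a sequence $\seq{D_i}{i<\omega_1}$ of dense subsets of the single poset $\P$ is in particular a sequence of dense subsets of a poset lying in $\Gamma$, so $\FA(\Gamma)$ provides the required filter. (If one worries about the degenerate case $\mathcal{S}=\leer$, the poset $\P^\DSRP_{\mathcal{S},\vT}$ is not even defined, so that case does not arise under the hypothesis that $\kla{\mathcal{S},\vT}$ be $\Gamma$-projective stationary.)

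\textbf{Step 3.} Now apply Lemma~\ref{lem:FA(P-DSRP)impliesDSRPatSAndT} with this $\mathcal{S}$ and $\vT$: from $\FA(\{\P^\DSRP_{\mathcal{S},\vT}\})$ we conclude that $\DSRP(\mathcal{S},\vT)$ holds. Since $\mathcal{S}$ and $\vT$ were an arbitrary pair witnessing that some $\Gamma$-projective stationary collection was chosen, this establishes $\Gamma$-$\DSRP(\kappa)$, and as $\kappa$ was an arbitrary regular cardinal above $\omega_1$, we conclude that $\Gamma$-\DSRP holds. This completes the proof.
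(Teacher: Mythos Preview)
Your proof is correct and matches the paper's approach exactly: the paper simply remarks that this lemma is ``another way to express Lemma~\ref{lem:FA(P-DSRP)impliesDSRPatSAndT},'' and your argument just unpacks that remark through the definitions.
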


\section{The stationary set preserving fragment of the diagonal strong reflection principle}
\label{sec:SSP-DSRP}

The appeal of the principle $\Gamma$-\DSRP is that it can be formulated in a way that's purely combinatorial and does not directly refer to the forcing class $\Gamma$ in the cases of main interest to us. We treat the case where $\Gamma$ is the class of all stationary set preserving forcing notions in the present section. Thus, we have to analyze which pairs $\kla{\mathcal{S},\vec{T}}$ are $\SSP$-projective stationary, and to this end, we will employ a few definitions.

\begin{defn}
\label{def:ProjectiveStationaryOnA}
Let $\kappa>\omega_1$ be regular, $S\sub[H_\kappa]^\omega$, and $A\sub\omega_1$. Then $S$ is \emph{projective stationary on $A$} if for every set $B\sub A$ that is stationary in $\omega_1$, the set $\{M\in S\st M\cap\omega_1\in B\}$ is a stationary subset of $[H_\kappa]^\omega$.
\end{defn}

\begin{remark}
\label{rem:VacuousUnlessStationary}
Note that projective stationarity on $A$ is vacuous unless $A$ is a stationary subset of $\omega_1$.
\end{remark}

The following definition is designed to capture $\SSP$-projective stationarity of pairs $\kla{\mathcal{S},\vT}$.

\begin{defn}
\label{def:ProjectiveStationaryOnVecT}
Let $\kappa>\omega_1$ be regular, and let $\mathcal{S}\sub\power([H_\kappa]^\omega)$ be a nonempty collection of stationary subsets of $[H_\kappa]^\omega$. Let $\vec{T}$ be a sequence of pairwise disjoint stationary subsets of $\omega_1$. Then $\mathcal{S}$ is \emph{projective stationary on $\vec{T}$} if the following hold:
\begin{enumerate}[label=(\alph*)]
\item
\label{item:PSonEachTi}
for every $i<\omega_1$, and for every $S\in\mathcal{S}$, $S$ is projective stationary on $T_i$,
\item
\label{item:StrangeCondition}
$\bigcup\mathcal{S}$ is projective stationary on $\{\alpha\in\bigcup_{i<\omega_1}T_i\st\forall\beta<\alpha\quad\alpha\notin T_\beta\}$.
\end{enumerate}
\end{defn}

Note that clause \ref{item:StrangeCondition} can be expressed as saying that $\bigcup\mathcal{S}$ is projective stationary on $\bigcup_{i<\omega_1}T_i\setminus\bigtriangledown_{i<\omega_1}T_i$, and is vacuous if this set is nonstationary (see Remark \ref{rem:VacuousUnlessStationary}). Let's say that $\vT$ is \emph{maximal} in this case. This is equivalent to saying that for every stationary subset $A\sub\bigcup_{i<\omega_1}T_i$, there is an $i<\omega_1$ such that $A\cap T_i$ is stationary. In fact, maximality simplifies the whole concept considerably.

\begin{remark}
\label{rem:ProjStatOnVecTifTisMaximal}
If $\kappa>\omega$ is regular, $\vec{T}$ is an $\omega_1$-sequence of pairwise disjoint stationary subsets of $\omega_1$ that is maximal, and $\mathcal{S}$ is a collection of stationary subsets of $[H_\kappa]^\omega$, then $\mathcal{S}$ is projective stationary on $\vec{T}$ iff every $S\in\mathcal{S}$ is projective stationary on $D=\bigcup_{i<\omega_1}D_i$.

Thus, if $\vT$ is a maximal partition of $\omega_1$ into stationary sets, then $\mathcal{S}$ is projective stationary on $\vT$ iff every $S\in\mathcal{S}$ is projective stationary.
\end{remark}

\begin{proof}
For the direction from left to right, if $A$ is a stationary subset of $D$, then by maximality of $\vec{T}$, there is an $i<\omega_1$ such that $A\cap T_i$ is stationary, so that condition \ref{item:PSonEachTi} of Definition \ref{def:ProjectiveStationaryOnVecT} implies that $\{M\in S\st M\cap\omega_1\in A\}$ is stationary, for every $S\in\mathcal{S}$. Vice versa, if $\mathcal{S}$ is projective stationary on $D$, then condition \ref{item:PSonEachTi} of Definition \ref{def:ProjectiveStationaryOnVecT} follows immediately, and by the remark above, condition \ref{item:StrangeCondition} is vacuous by the maximality of $\vec{T}$.
\end{proof}

Maximal partitions always exist (see \cite[Remark 3.17]{Fuchs:CanonicalFragmentsOfSRP}), and we don't have a use for nonmaximal ones, so the reader may think of this special case in what follows with no loss. Nevertheless, we carry out the analysis in the more general setting.

The assumptions of the following lemma could be weakened, but the present form suffices for our purposes.

\begin{lem}
\label{lem:CountableDistributivityOfP-DSRP}
Let $\kappa$ be an uncountable regular cardinal, $\leer\neq\mathcal{S}\sub\power([H_\kappa]^\omega)$ a collection of stationary subsets, and $\vT$ an $\omega_1$-sequence of pairwise disjoint stationary subsets of $\omega_1$, and let $\P=\P^\DSRP_{\mathcal{S},\vT}$. If every $S\in\mathcal{S}$ is projective stationary on $T_0$, then $\P$ is countably distributive.
\end{lem}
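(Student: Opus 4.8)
The plan is to establish the standard equivalent of countable distributivity --- that the intersection of any countably many dense open subsets of $\P$ is dense. So fix $p\in\P$ and dense open sets $\seq{D_n}{n<\omega}$. Since $\mathcal{S}\neq\leer$ and the second coordinate of a condition can always be enlarged, I may assume (replacing $p$ by an extension) that $\lambda^p\ge 1$ and put $S^*=S^p_0\in\mathcal{S}$; by hypothesis $S^*$ is projective stationary on $T_0$. Fix a large regular $\theta$ and let $\mathcal{A}=\kla{H_\theta,\in,<^*,\P,\mathcal{S},\vT,\seq{D_n}{n<\omega}}$ with $<^*$ a well-order of $H_\theta$.

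The crucial preliminary step is to pick the right countable $N\prec\mathcal{A}$. Since $S^*$ is projective stationary on $T_0$ and $T_0$ is stationary, the set $X=\{M\in S^*\st M\cap\omega_1\in T_0\}$ is stationary in $[H_\kappa]^\omega$, hence its lifting $X\projectup[H_\theta]^\omega$ is stationary in $[H_\theta]^\omega$ (the standard analogue for stationary sets of the closure properties recorded after Observation~\ref{obs:SpreadOutSetsClosedUnderIntersectionWithClubs}). Intersecting with the club of countable $N\prec\mathcal{A}$ with $p\in N$, I get a countable $N\prec\mathcal{A}$ with $p\in N$, $N\cap H_\kappa\in S^*$ and $\delta:=N\cap\omega_1\in T_0$. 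Now, working inside $N$, I build a decreasing chain $p=p_0\ge p_1\ge\cdots$ of conditions with $p_{n+1}\in D_n\cap N$, dovetailing against enumerations (taken in $V$) $N\cap H_\kappa=\{a_n\st n<\omega\}$, $N\cap\mathcal{S}=\{S^{(n)}\st n<\omega\}$ and an increasing $\seq{\gamma_n}{n<\omega}$ cofinal in $\delta$, arranged so that $\delta^{p_{n+1}},\lambda^{p_{n+1}}\ge\gamma_n$, $a_n\in Q^{p_{n+1}}_{\delta^{p_{n+1}}}$, and $S^{(n)}$ occurs in $\vS^{p_{n+1}}$. Each such $p_{n+1}$ exists in $N$ because, by Fact~\ref{fact:PropertiesOfP-DSRP} together with the downward closure of $D_n$, the set of extensions of $p_n$ with these properties is dense below $p_n$ and is definable over $\mathcal{A}$ from parameters lying in $N$.

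Finally, set $q=\kla{(\bigcup_n\vQ^{p_n})\verl\kla{N\cap H_\kappa},\ \bigcup_n\vS^{p_n}}$. By the dovetailing, $\bigcup_n\vQ^{p_n}$ is a continuous $\in$-chain of limit length $\delta$ whose union is exactly $N\cap H_\kappa$ (each $a_n$ appears in it, and every $Q^{p_n}_\alpha$ is a countable subset of $N$), so appending $N\cap H_\kappa$ preserves continuity; and $\bigcup_n\vS^{p_n}$ has length $\delta$ with all entries in $\mathcal{S}$. The one clause of Definition~\ref{def:ForcingDSRP} needing care is clause~(3) at $\alpha=\delta$: as the $T_i$ are pairwise disjoint and $\delta\in T_0$, the sole requirement is $Q^q_\delta\in S^q_0$, and this holds since $S^q_0=S^p_0=S^*$ and $Q^q_\delta=N\cap H_\kappa\in S^*$ by the choice of $N$. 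Hence $q\in\P$ with $q\le p_n$ for every $n$, and since each $D_n$ is open, $q\le p_{n+1}$ gives $q\in D_n$; thus $q\le p$ lies in $\bigcap_n D_n$, proving density. The main obstacle is exactly this last verification: the membership demand at the top ordinal $\delta$ cannot be met on the fly, which is why one prepares $S^*=S^p_0$ in advance and uses projective stationarity on $T_0$ to secure an $N$ with $N\cap H_\kappa\in S^*$ and $N\cap\omega_1\in T_0$; one must also keep $\lambda^{p_n}$ growing up to $\delta$ so that every index triggering a requirement of clause~(3) actually lies in $\dom(\vS^q)$.
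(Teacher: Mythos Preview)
Your proof is correct and follows essentially the same route as the paper's: choose a countable $N\prec\mathcal{A}$ with $N\cap H_\kappa\in S^*=S^p_0$ and $N\cap\omega_1\in T_0$ using projective stationarity of $S^*$ on $T_0$, then build a descending sequence below $p$ meeting the $D_n$'s inside $N$, and cap off with $N\cap H_\kappa$ at level $\delta$. The only cosmetic difference is that the paper simply picks an $(N,\P)$-generic filter $G\ni p$ and lets $\bar q$ be the union of conditions in $G$ (invoking Fact~\ref{fact:PropertiesOfP-DSRP} to see $\dom(\vec Q^{\bar q})=\dom(\vec S^{\bar q})=\delta$ and $\bigcup_i Q^{\bar q}_i=N\cap H_\kappa$), whereas you unroll this into an explicit dovetailing construction; your additional bookkeeping against $N\cap\mathcal{S}$ is harmless but unnecessary here.
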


\begin{proof}
We have to show that, given a sequence $\vec{D}=\seq{D_n}{n<\omega}$ of dense open subsets of $\P$, the intersection $\Delta=\bigcap_{n<\omega}D_n$ is dense in $\P$. So, fixing a condition $p\in\P$, we have to find a $q\le p$ in $\Delta$. We may assume that $S=S^p_0$ is defined.

Let $\lambda$ be a regular cardinal much greater than $\kappa$, say $\lambda>2^{2^{\card{\P}}}$, and consider the model $\mathcal{N}=\kla{H_\lambda,\in,<^*,\P,\vec{D},p}$, where $<^*$ is a well-ordering of $H_\lambda$.

Since $S'=\{X\in S\st X\cap\omega_1\in T_0\}$ is stationary, we can let
$\calM=\mathcal{N}|X\prec\mathcal{N}$ be a countable elementary submodel with $X\cap H_\kappa\in S'$, so that $X\cap\omega_1\in T_0$.

Since $\calM$ is countable, we can pick a filter $G$ which is $\calM$-generic for $\P$ and contains $p$. Let \[\bar{q}=\kla{\vec{Q}^{\bar{q}},\vec{S}^{\bar{q}}}=\kla{\bigcup_{r\in G}\vec{Q}^r,\bigcup_{r\in G}\vec{S}^r}.\]
Using items                                                                                                                                                                                                                                                                                                                                                                                                                                                       \ref{item:GenericSequenceHasLengthOmega1-DSRP} and \ref{item:UnionOfGenericIsExhaustiveInFirstCoordinate} of Fact \ref{fact:PropertiesOfP-DSRP}, it follows that $\delta:=\dom(\vec{Q}^{\bar{q}})=\dom(\vec{S}^{\bar{q}})=X\cap\omega_1$, and that $\bigcup_{i<\delta}Q^{\bar{q}}_i=X\cap H_\kappa\in S$. Thus, if we set $q=\kla{\vQ^{\bar{q}}\verl(X\cap H_\kappa),\vS^{\bar{q}}}$, then $q\in\P$, and $q$ extends every condition in $G$. Moreover, since $D_n\in M$, for each $n<\omega$, it follows that $G$ meets each $D_n$, and hence that $p\ge q\in\Delta$, as desired.
\end{proof}

We are now ready to prove our characterization of the pairs that are $\SSP$-projective stationary.

\begin{thm}
\label{thm:SSP-AdequacyIsPSonVecT}
Let $\kappa>\omega_1$ be regular, $\mathcal{S}\sub\power([H_\kappa]^\omega)$ a nonempty collection of stationary subsets of $[H_\kappa]^\omega$, and $\vec{T}$ an $\omega_1$-sequence of pairwise disjoint stationary subsets of $\omega_1$. The following are equivalent:
\begin{enumerate}[label=(\arabic*)]
\item
\label{item:ProjectiveStationary}
$\mathcal{S}$ is projective stationary on $\vec{T}$.
\item
\label{item:Adequate}
$\kla{\mathcal{S},\vec{T}}$ is $\SSP$-projective stationary.
\end{enumerate}
\end{thm}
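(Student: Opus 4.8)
The plan is to prove the two implications separately; both are adaptations of the proof of Fact~\ref{fact:SisProjStatIffP_SisSSP} (Feng--Jech), the extra work being the bookkeeping caused by the partition $\vec{T}$ and by the enumeration component $\vec{S}^{p}$ of conditions. Throughout write $\P=\P^{\DSRP}_{\mathcal{S},\vec{T}}$, and for $\alpha\in\bigcup_{i}T_{i}$ let $j(\alpha)$ denote the unique index (by pairwise disjointness) with $\alpha\in T_{j(\alpha)}$.

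\emph{From \ref{item:Adequate} to \ref{item:ProjectiveStationary}, contrapositively.} Suppose clause \ref{item:PSonEachTi} of Definition~\ref{def:ProjectiveStationaryOnVecT} fails, say $S\in\mathcal{S}$, $i_{0}<\omega_{1}$, $B\subseteq T_{i_{0}}$ stationary and $\{M\in S:M\cap\omega_{1}\in B\}$ nonstationary in $[H_\kappa]^\omega$; fix $F\colon H_\kappa^{<\omega}\to H_\kappa$ whose set of closure points is a club disjoint from it. One easily builds a condition $p_{0}$ with $i_{0}<\lambda^{p_{0}}$ and $S^{p_{0}}_{i_{0}}=S$; force below $p_{0}$. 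By Lemma~\ref{lem:PropertiesOfTheGeneric} the generic $\vec{Q}$ is a continuous $\in$-chain of length $\omega_{1}$ exhausting $H_\kappa$, and $Q_\alpha\in S$ for every $\alpha\in T_{i_{0}}$ (since $i_{0}<\lambda^{p_{0}}$ pins the $i_{0}$-th term of the generic enumeration to $S$); as usual the set of $\alpha$ with $Q_\alpha$ closed under $F$ and $Q_\alpha\cap\omega_{1}=\alpha$ contains a club $C$. If $\P\in\SSP$ then $B$ stays stationary, so some $\alpha\in B\cap C\subseteq T_{i_{0}}\cap C$ has $Q_\alpha\in S$, $Q_\alpha\cap\omega_{1}=\alpha\in B$ and $Q_\alpha$ $F$-closed, a contradiction. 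Failure of \ref{item:StrangeCondition} is handled identically, using only that $Q_\alpha\in\bigcup\mathcal{S}$ whenever $\alpha\in\bigcup_{i}T_{i}$ (as $Q_\alpha$ then lies in the value of the generic enumeration at block $j(\alpha)$); this in fact shows that $\P\in\SSP$ implies $\bigcup\mathcal{S}$ is projective stationary on all of $\bigcup_{i<\omega_{1}}T_{i}$, which in particular gives \ref{item:StrangeCondition}.

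\emph{From \ref{item:ProjectiveStationary} to \ref{item:Adequate}.} I would show directly that $\P$ preserves stationarity. Given a stationary $B\subseteq\omega_{1}$, a $\P$-name $\dot{E}$ for a club in $\omega_{1}$ and a condition $p$, it suffices to produce $q\le p$ and $\alpha\in B$ with $q\forces\check{\alpha}\in\dot{E}$. Split $B=(B\setminus\bigcup_{i}T_{i})\cup(B\cap(\bigcup_{i}T_{i}\setminus\bigtriangledown_{i<\omega_{1}}T_{i}))\cup(B\cap\bigtriangledown_{i<\omega_{1}}T_{i})$; one piece is stationary, and in the third case Fodor's lemma applied to the regressive map $\alpha\mapsto j(\alpha)$ produces a fixed $j^{*}$ with $B^{*}:=B\cap T_{j^{*}}\cap\bigtriangledown_{i<\omega_{1}}T_{i}$ stationary. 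In the relevant case one first fixes a ``target'': nothing is needed in the first case; nothing is needed in the second case, since there the crucial block index $j(\delta)\ge\delta\colon=M\cap\omega_1>\lambda^{p}$ is free in $\vec{S}$ and one only needs $M\cap H_\kappa\in\bigcup\mathcal{S}$ (clause \ref{item:StrangeCondition}); in the third case the target is $S^{*}:=S^{p}_{j^{*}}$ if $j^{*}<\lambda^{p}$, and otherwise any $S^{*}\in\mathcal{S}$, after passing to an extension $p'\le p$ that carries $S^{*}$ at block $j^{*}$, and one needs $M\cap H_\kappa\in S^{*}$ (clause \ref{item:PSonEachTi}, applied to $S^{*}$ on $T_{j^{*}}$, using $B^{*}\subseteq T_{j^{*}}$). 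Combining the appropriate instance of projective stationarity with the stationarity of $\{M:M\cap\omega_{1}\in B'\}$ for stationary $B'$, one obtains a countable $M\prec H_\lambda$ (large regular $\lambda$) with $p,\P,\dot{E},\vec{T},\mathcal{S},S^{*}\in M$, with $\delta:=M\cap\omega_{1}$ in the relevant piece of $B$, and with $M\cap H_\kappa\in S^{*}$ resp.\ $\in\bigcup\mathcal{S}$ where required. Build an $M$-generic filter $G$ through $p$ (or $p'$), set $\bar{q}=\langle\bigcup_{r\in G}\vec{Q}^{r},\bigcup_{r\in G}\vec{S}^{r}\rangle$ — by Fact~\ref{fact:PropertiesOfP-DSRP} both coordinates of $\bar q$ have domain $\delta$, with $\bigcup_{i<\delta}Q^{\bar{q}}_{i}=M\cap H_\kappa$ and $\vec{S}^{\bar{q}}$ enumerating $\mathcal{S}\cap M$ — and extend $\bar{q}$ to a condition $q$ by putting $Q^{q}_{\delta}=M\cap H_\kappa$ and, if $\delta\in T_{j(\delta)}$, extending $\vec{S}^{\bar{q}}$ so that block $j(\delta)$ carries a member of $\mathcal{S}$ containing $M\cap H_\kappa$. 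Pairwise disjointness of the $T_{i}$ together with clause (3) for the conditions in $G$ (which forces $j(\alpha)<\delta$ for every $\alpha<\delta$) show $q\in\P$ and $q\le r$ for all $r\in G$; the standard density argument inside $M$ then gives that $q$ forces $\dot{E}$ unbounded in $\delta$, hence $q\forces\check{\delta}\in\dot{E}$, and $\delta\in B$.

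\emph{Main obstacle.} The direction \ref{item:Adequate} $\Rightarrow$ \ref{item:ProjectiveStationary} is routine. In \ref{item:ProjectiveStationary} $\Rightarrow$ \ref{item:Adequate}, the crux is that capping the union condition $\bar{q}$ is only possible if $M\cap H_\kappa$ has already been arranged to lie in whichever member of $\mathcal{S}$ the generic enumeration assigns to the unique block $T_{j(\delta)}$ containing $\delta=M\cap\omega_{1}$ — and that block may lie below $\lambda^{p}$, strictly between $\lambda^{p}$ and $\delta$, or at or above $\delta$. Managing these three possibilities is what dictates the three-way case split and the Fodor step, and is exactly why both clauses of Definition~\ref{def:ProjectiveStationaryOnVecT}, rather than just one, are needed.
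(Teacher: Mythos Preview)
Your proof is correct and follows essentially the same approach as the paper's. The only notable difference is organizational: in the direction \ref{item:ProjectiveStationary}$\Rightarrow$\ref{item:Adequate}, you decompose $B$ upfront into the three pieces $B\setminus D$, $B\cap(D\setminus\bigtriangledown_{i}T_i)$, and $B\cap\bigtriangledown_{i}T_i$, and then invoke Fodor on the last piece to produce a fixed $j^*$ with $B\cap T_{j^*}$ stationary; the paper instead runs a sequential case analysis (Case~1: some $A\cap T_{i_0}$ is stationary; Case~2: $A\setminus D$ is stationary; Case~3: both fail, whence one constructs $A^*\subseteq D\setminus\bigtriangledown_i T_i$ by intersecting with a diagonal intersection of clubs). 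These are equivalent routes to the same three subcases, and the capping argument and the direction \ref{item:Adequate}$\Rightarrow$\ref{item:ProjectiveStationary} are handled identically.
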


\begin{proof}
Let $D=\bigcup_{i<\omega_1}T_i$, let $t:D\To\omega_1$ be defined by $\alpha\in T_{t(\alpha)}$, and set $\P=\P^{\DSRP}_{\mathcal{S},\vec{T}}$.

\ref{item:ProjectiveStationary}$\implies$\ref{item:Adequate}:
Let $A\sub\omega_1$ be stationary, $p\in\P$ and $\dot{C}\in\V^\P$ such that $p\forces_\P$``$\dot{C}$ is a club subset of $\omega_1$.'' We will find a condition $q\le p$ in $\P$ that forces that $\dot{C}$ intersects $\check{A}$. Let $\theta$ be a sufficiently large regular cardinal, say $\theta>2^{2^\kappa}$.

\noindent\emph{Case 1:} There is an $i_0<\omega_1$ such that $A\cap T_{i_0}$ is stationary.

In this case, fix such an $i_0$. By assumption, for every $S\in\mathcal{S}$, $\{M\in S\st M\cap\omega_1\in A\cap T_{i_0}\}$ is stationary. By strengthening $p$ if necessary, we may assume that $i_0<\lambda^p$. Let $N\prec\kla{H_\theta,\in,p,\dot{C},\P,\mathcal{S},\vec{T},<^*}$ be a countable elementary submodel such that $M=N\cap H_\kappa\in S^p_{i_0}$ and $\delta=M\cap\omega_1\in A\cap T_{i_0}$. Let $g$ be $\P$-generic over $N$ with $p\in g$, and let $\vec{Q}=\bigcup_{q\in g}\vec{Q}^q$ and $\vec{S}=\bigcup_{q\in g}\vec{S}^q$.
Then $\vec{Q}$ is a sequence of length $\delta$, and $M=\bigcup_{i<\delta}Q_i\in S_{i_0}$. So since $\delta\in T_{i_0}$, $q=\kla{\vec{Q}\verl M,\vec{S}}$ is a condition that strengthens $p$ and forces that $\delta\in\dot{C}$, since $\dot{C}^g$ is club in $\delta$. Since $\delta\in A$, this means that $q$ forces that $\dot{C}$ intersects $\check{A}$, as desired.

\noindent\emph{Case 2:} $A\setminus D$ is stationary.

Let $N\prec\kla{H_\theta,\in,\mathcal{S},\vec{T},\P,p,\dot{C},<^*}$ be countable with $N\cap\omega_1=\delta\in A\setminus D$. Let $g\sub\P$ be $N$-generic with $p\in g$. Let $\vec{Q}=\bigcup_{q\in g}\vec{Q}^q$ and $\vec{S}=\bigcup_{q\in g}\vec{S}^q$. Since $\delta=\dom(\vQ)\notin D$, it follows that $q=\kla{\vec{Q}\verl(N\cap H_\kappa),\vS}\in\P$, and since $\dot{C}^g$ is club in $\delta$, it follows that $q$ forces that $\check{\delta}\in\dot{C}$, hence that $\check{A}\cap\dot{C}\neq\leer$.

\noindent\emph{Case 3:} Cases 1 and 2 fail.

Then $A\cap D$ is stationary and for all $i<\omega_1$, $A\cap T_i$ is nonstationary. Fix, for every $i<\omega_1$, a club $C_i\sub\omega_1$ disjoint from $A\cap T_i$. Let $A^*=A\cap D\cap(\diag_{i<\omega_1}C_i)$. Then $A^*$ is stationary and has the property that for all $\alpha\in A^*$ and all $\beta<\alpha$, $\alpha\notin T_\beta$. So $A^*\sub Z=\{\alpha\in D\st\forall\beta<\alpha\ \alpha\notin T_\beta\}$, and by
condition \ref{item:StrangeCondition} of Definition \ref{def:ProjectiveStationaryOnVecT}, $\bigcup\mathcal{S}$ is projective stationary on $Z$. Thus, we can pick a countable $N\prec\kla{H_\theta,\in,\P,p,\dot{C},A^*,\mathcal{S},\vec{T}}$ such that $\delta=N\cap\omega_1\in A^*$ and $M=N\cap H_\kappa\in\bigcup\mathcal{S}$. Let $S\in\mathcal{S}$ be such that $M\in S$. Let $g\sub\P$ be $N$-generic for $\P$. Let $\vec{Q}=\bigcup_{q\in g}\vec{Q}^q$ and $\vec{S}=\bigcup_{q\in g}\vec{S}^q$. Then $\dom(\vS)=\dom(\vQ)=\delta\in A^*$, and it follows that $t(\delta)\ge\delta$. Thus, $t(\delta)\notin\dom(f)$, and we can extend $\vS$ to some $\vS'$ of length $t(\delta)+1$ so that $\vS'\rest\delta=\vS$ and $S'_{t(\delta)}=S$. We can then let $\vec{Q}'=\vec{Q}\verl M$, resulting in a condition $q=\kla{\vec{Q}',\vS'}$ extending $p$ and forcing that $\delta\in\dot{C}\cap\check{A}^*$. Note that $A^*\sub A$.

Thus, in each case, we have found an extension $q$ of $p$ forcing that $\dot{C}$ intersects $\check{A}$. Thus, the stationarity of $A$ is preserved by $\P$, and since this holds for any stationary subset of $\omega_1$, $\P$ is stationary set preserving, that is, $\kla{\mathcal{S},\vec{T}}$ is $\SSP$-projective stationary.

\ref{item:Adequate}$\implies$\ref{item:ProjectiveStationary}:
Let $\P$ be stationary set preserving. We have to show that $\mathcal{S}$ is projective stationary on $\vec{T}$. This amounts to proving the two conditions listed in Definition \ref{def:ProjectiveStationaryOnVecT}.

For condition \ref{item:PSonEachTi}, let $i<\omega_1$, $S\in\mathcal{S}$, and let $A\sub T_i$ be stationary. We have to show that $S_A=\{M\in S\st M\cap\omega_1\in A\}$ is a stationary subset of $[H_\kappa]^\omega$. If not, then let $C\sub[H_\kappa]^\omega$ be club with $S_A\cap C=\leer$. Let $G$ be $\P$-generic over $\V$, such that $G$ contains a condition $p$ with $i<\lambda^p$ and $S^p_i=S$. Let $\vec{Q}=\bigcup_{q\in G}\vec{Q}^q$ and $\vec{S}=\bigcup_{q\in G}\vec{S}^q$. In $\V[G]$, $A$ is still stationary, so there is a
\[\delta\in A\cap\{\alpha<\omega_1\st Q_\alpha\cap\omega_1=\alpha\}\cap\{Q_\alpha\cap\omega_1\st Q_\alpha\in C\}.\]
But then $\delta=Q_\delta\cap\omega_1$ and $Q_\delta\in C$, and since $\delta\in A\sub T_i$, we have that $Q_\delta\in S_i$. So $Q_\delta\in S_A\cap C\neq\leer$. Since $C$ was arbitrary, this shows that $S_A$ is stationary, as claimed.

For condition \ref{item:StrangeCondition}, suppose $A\sub D$ is stationary in $\omega_1$ and has the property that for all $\alpha\in A$ and all $\beta<\alpha$, $\alpha\notin T_\beta$. Letting $S^*=\bigcup\mathcal{S}$, we have to show that
\[S^*_A=\{M\in S^*\st M\cap\omega_1\in A\}\]
is stationary. So let $C\sub[H_\kappa]^\omega$ be club. Let $G$ be $\P$-generic, and let $\vec{Q}=\bigcup_{q\in g}\vec{Q}^q$ and $\vec{S}=\bigcup_{q\in g}\vec{S}^q$. Let
\[\delta\in A\cap\{\alpha<\omega_1\st Q_\alpha\cap\omega_1=\alpha\}\cap\{Q_\alpha\cap\omega_1\st Q_\alpha\in C\}.\]
This is possible, because $A$ is stationary in $\V[G]$. It follows that $\delta=Q_\delta\cap\omega_1\in A\sub D$, so that $t(\delta)$ is defined and $Q_\delta\in S_{t(\delta)}\in\mathcal{S}$. It follows that $Q_\delta\in S^*_A\cap C$.
\end{proof}

\begin{remark}
If the nonstationary ideal on $\omega_1$ is $\omega_2$-saturated, then it was shown in \cite{FengJech:ProjectiveStationarityAndSRP} that for every stationary subset $S$ of $[H_\kappa]^\omega$, where $\kappa\ge\omega_2$ is regular, there is a stationary set $D\sub\omega_1$ such that $S$ is projective stationary on $D$. By the previous remark, if $\vec{T}$ is any partition of such a $D$ into stationary sets, and this partition is maximal, then $\mathcal{S}=\{T\sub[H_\kappa]^\omega\st T\ \text{is projective stationary on $D$}\}$ is projective stationary on $\vec{T}$, and $S\in\mathcal{S}$. 
\end{remark}

\section{The subcomplete fragment of the diagonal strong reflection principle}
\label{sec:SC-DSRP}

We will now carry out the analysis of Section \ref{sec:SSP-DSRP} for the class of $\infty$-subcomplete forcing, that is, $\Gamma=\infSC$. Thus, we have to find a description of the pairs $\kla{\mathcal{S},\vec{T}}$ that are $\infSC$-projective stationary. To this end, we first make the following definition, corresponding to the notion of projective stationarity on a subset of $\omega_1$.

\begin{defn}
\label{def:SpreadOutOnA}
Let $D$ be a set, usually of the form $H_\kappa$, for some regular $\kappa>\omega_1$, and let $T\sub\omega_1$.
Then a set $S\sub[D]^\omega$ with $\bigcup S=D$ is \emph{spread out on $T$} if for all sufficiently large $\theta$, whenever $\tau$, $A$, $X$ and $a$ are such that $H_\theta\sub L_\tau^A=N\models\ZFCm$, $S,a,T,\theta\in X\prec N$, $X$ is countable and full, \emph{and $X\cap\omega_1\in T$}, then there are a $Y\prec N$ and an isomorphism
$\pi:N|X\To N|Y$ such that $\pi(a)=a$ and $Y\cap D\in S$.
\end{defn}

As with projective stationarity on a nonstationary set, this notion is also vacuous in this case, see Remark \ref{rem:VacuousUnlessStationary}.

\begin{remark}
\label{rem:SpreadOutVacuousUnlessStationary}
Let $\kappa>\omega_1$ be regular, and let $S\sub[D]^\omega$ be stationary in $D$. If $T\sub\omega_1$ is nonstationary, then $S$ is spread out on $T$.
\end{remark}

\begin{proof}
Let $\theta>\kappa$, and let $\tau$, $A$, $X$ and $a$ be as in Definition \ref{def:SpreadOutOnA}. In particular, $T\in X$. By elementarity, $X$ sees that $T$ is not stationary, so there is a club set $C\sub\omega_1$ in $X$, disjoint from $T$. Letting $\delta=X\cap\omega_1$, it follows that $C\cap\delta$ is unbounded in $\delta$, and hence that $\delta\in C$. Thus, $X\cap\omega_1\notin T$, and hence, the condition stated in Definition \ref{def:SpreadOutOnA} holds vacuously.
\end{proof}

The following definition, which corresponds to Definition \ref{def:ProjectiveStationaryOnVecT} in the stationary set preserving case, is designed to capture $\infSC$-projective stationarity.

\begin{defn}
\label{def:SpreadOutOnVecT}
Let $\kappa>\omega_1$ be regular, $\mathcal{S}$ a nonempty collection of stationary subsets of $[H_\kappa]^\omega$, and $\vec{T}$ a sequence of pairwise disjoint stationary subsets of $\omega_1$. Then $\mathcal{S}$ is \emph{spread out on $\vec{T}$} if
\begin{enumerate}[label=(\alph*)]
\item
\label{item:EverybodySpreadOutOnEachPiece}
for every $i<\omega_1$ and for every $S\in\mathcal{S}$, $S$ is spread out on $T_i$.
\item
\label{item:UnionSpreadOutOnLeftovers}
$\bigcup\mathcal{S}$ is spread out on $\bigcup_{i<\omega_1}T_i\ohne\bigtriangledown_{i<\omega_1}T_i$.
\end{enumerate}
\end{defn}

As before, condition \ref{item:UnionSpreadOutOnLeftovers} is vacuous if $\bigcup_{i<\omega_1}T_i\ohne\bigtriangledown_{i<\omega_1}T_i$ is nonstationary, that is, if $\vT$ is maximal, and as before, maximality results in a considerable simplification of the concept.

\begin{remark}
\label{rem:SpreadOutOnvecTSimplified}
Let $\kappa>\omega_1$ be regular, let $\vT=\seq{T_i}{i<\omega_1}$ be a sequence of pairwise disjoint stationary subsets of $\omega_1$ that is maximal, and let $\mathcal{S}$ be a collection of subsets of $[H_\kappa]^\omega$. Then $\mathcal{S}$ is spread out on $\vT$ iff every $S\in\mathcal{S}$ is spread out on $\bigcup_{i<\omega_1}T_i$.

Thus, if $\vT$ is a maximal partition of $\omega_1$ into stationary sets, then $\mathcal{S}$ is spread out on $\vT$ iff every $S\in\mathcal{S}$ is spread out.
\end{remark}

\begin{proof}
Set $D=\bigcup_{i<\omega_1}T_i$.
For the implication from left to right, fix $S\in\mathcal{S}$. let $\theta$ be sufficiently large, and let $H_\theta\sub L_\tau^A=N\models\ZFCm$. Let $X$ be countable and full, with $N|X\prec N$, and assume that $\theta, S, D\in X$. Fix some $a\in X$. By a version of Fact \ref{fact:WeaklySpreadOutImpliesSpreadOut} may also assume that $\vT\in X$. But then, $Z=D\ohne\bigtriangledown_{i<\omega_1}T_i$ is also in $X$, and $Z$ is nonstationary, by assumption. As in the proof of \ref{rem:SpreadOutVacuousUnlessStationary}, it follows that $\delta=X\cap\omega_1\notin Z$. Now suppose that $\delta\in D$. Since $\delta\notin Z$, this means that $\delta\in T_{i_0}$, for some $i_0<\delta$. But since $S$ is spread out on $T_{i_0}$, there are $\pi$, $Y$ such that $\pi:N|X\To N|Y\prec N$ is an isomorphism that fixes $a$, and such that $Y\cap H_\kappa\in S$, as wished.

The converse is trivial, because if every $S\in\mathcal{S}$ is spread out on $\bigcup_{i<\omega_1}T_i$, then it is trivially also spread out on each $T_i$ (we may assume that $D$ belongs to the relevant $X$). And condition \ref{item:UnionSpreadOutOnLeftovers} of Definition \ref{def:SpreadOutOnVecT} is vacuous, since $\vT$ is maximal.
\end{proof}

As before, the reader may focus on the situation where $\vT$ is maximal, but we treat the general case here.

\begin{thm}
\label{thm:CharacterizationOfInfSCAdquacy}
Let $\kappa>\omega_1$ be regular, $\vec{T}$ an $\omega_1$-sequence of pairwise disjoint stationary subsets of $\omega_1$, and $\mathcal{S}$ a nonempty collection of stationary subsets of $[H_\kappa]^\omega$. The following are equivalent.
\begin{enumerate}[label=(\arabic*)]
\item
\label{item:SpreadOutOnUnionOfVecT}
$\mathcal{S}$ is spread out on $\vec{T}$. 
\item
\label{item:InfSCAdequate}
$\kla{\mathcal{S},\vec{T}}$ is $\infSC$-projective stationary.
\end{enumerate}
\end{thm}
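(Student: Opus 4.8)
The plan is to prove the equivalence by mirroring the structure of the proof of Theorem~\ref{thm:SSP-AdequacyIsPSonVecT}, replacing the "generic elementary submodel" arguments with the lifting arguments that witness $\infty$-subcompleteness. Throughout, let $D=\bigcup_{i<\omega_1}T_i$, let $t:D\To\omega_1$ be defined by $\alpha\in T_{t(\alpha)}$, and set $\P=\P^{\DSRP}_{\mathcal{S},\vec{T}}$. Recall that by Lemma~\ref{lem:CountableDistributivityOfP-DSRP}, $\P$ is countably distributive as soon as every $S\in\mathcal{S}$ is spread out on $T_0$ (which follows from \ref{item:SpreadOutOnUnionOfVecT}, hence will be available in the forward direction); countable distributivity is a harmless-but-useful background fact.

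For \ref{item:SpreadOutOnUnionOfVecT}$\implies$\ref{item:InfSCAdequate}: I would fix a cardinal $\theta$ large enough to verify what is needed, take $N=L_\tau^A\models\ZFCm$ with $H_\theta\sub N$ and $\theta<\tau$, take $\sigma:\bN\prec N$ with $\bN$ countable, transitive, full, $\P,\theta$ (and the relevant parameters $\mathcal{S},\vec{T}$, as well as $\bar s$) in $\ran(\sigma)$, fix $\bar G$ which is $\bar\P$-generic over $\bN$, and fix the ordinal parameters as in Definition~\ref{def:(ininifty-)subcompleteness} (there is no condition \ref{item:SupremumCondition} to worry about, since we are dealing with $\infSC$). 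Inside $\bN$, the generic $\bar G$ produces $\vec{\bar Q}=\bigcup_{r\in\bar G}\vec Q^r$ and $\vec{\bar S}=\bigcup_{r\in\bar G}\vec S^r$, which by Fact~\ref{fact:PropertiesOfP-DSRP} (interpreted in $\bN$) have length $\bdelta:=\bN\cap\omega_1$, with $\bigcup_{i<\bdelta}\bar Q_i=H_\kappa^{\bN}=H_\kappa\cap\bN$. Set $\delta=\bN\cap\omega_1$. The key case split is on where $\delta$ sits:

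\emph{Case 1: $\delta\in T_{i_0}$ for some $i_0<\delta$.} Then condition \ref{item:EverybodySpreadOutOnEachPiece} of Definition~\ref{def:SpreadOutOnVecT} applies: arranging (by strengthening the condition controlling $\bar s$) that $\bar s$ codes the requisite data and that $i_0<\lambda$ for the relevant coordinate, apply spread-out-ness of $S^p_{i_0}$ on $T_{i_0}$ to obtain $Y\prec N$ and an isomorphism $\pi:N|\bar X\To N|Y$ (where $\bar X$ is the relevant transitivization datum) fixing the parameter $a$ with $Y\cap H_\kappa\in S^p_{i_0}$. One then uses $\pi$ to transport $\sigma$ to the desired $\sigma'$, and the point is that $\sigma'`` \bar Q$ together with the extra top node $Y\cap H_\kappa$ forms a condition in $\P$ of length $\delta+1$ extending $p$; crucially, since $\delta\in T_{i_0}$ and the top node lies in $S^p_{i_0}$, clause (3) of Definition~\ref{def:ForcingDSRP} is satisfied. \emph{Case 2: $\delta\notin D$.} Then any elementary lifting with $Y\cap H_\kappa\in\bigcup\mathcal{S}$ — or even no control on the top node's membership, since clause (3) is vacuous at $\delta$ — suffices; just take $Y\cap H_\kappa$ arbitrary (using that $\bigcup\mathcal S$-membership is a stationary/club condition) and close off. \emph{Case 3: $\delta\in D$ but $\delta\in T_\beta$ for no $\beta<\delta$, i.e.\ $\delta\in D\ohne\bigtriangledown_{i<\omega_1}T_i$.} Then condition \ref{item:UnionSpreadOutOnLeftovers} applies: $\bigcup\mathcal{S}$ is spread out on this set, so we get a lifting with $Y\cap H_\kappa\in\bigcup\mathcal{S}$, say $Y\cap H_\kappa\in S$; since $t(\delta)\ge\delta$, the coordinate $t(\delta)$ of the generic $\vec{\bar S}$ is not yet defined, so we may freely extend $\vec{\bar S}$ by setting the $t(\delta)$-th entry to be $S$, and then adjoin $Y\cap H_\kappa$ on top as in Case 1. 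In every case we produce the condition $p\in\P$ required by the definition of $\infty$-subcompleteness, and the $\sigma'$ built from $\pi$ in $\V[G]$ (for $G\ni p$) satisfies clauses \ref{item:FirstSubcompletenessCondition}–(3); this shows $\P\in\infSC$.

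For \ref{item:InfSCAdequate}$\implies$\ref{item:SpreadOutOnUnionOfVecT}: assume $\P\in\infSC$ and verify the two clauses of Definition~\ref{def:SpreadOutOnVecT}. For \ref{item:EverybodySpreadOutOnEachPiece}, fix $i<\omega_1$, $S\in\mathcal{S}$, a large $\theta$, $N$, and a countable full $X\prec N$ with the right parameters (including $\vec T$, $S$) and with $\delta:=X\cap\omega_1\in T_i$; let $\sigma:\bN\To N|X$ be the inverse of the transitive collapse. Pick $\bar G$ which is $\bar\P$-generic over $\bN$ and \emph{which contains a condition forcing $S$ into the $i$-th coordinate} (this is possible by density, Fact~\ref{fact:PropertiesOfP-DSRP}\ref{item:UnionOfGenericIsExhaustiveInSecondCoordinate}); run the generic to get $\vec{\bar Q},\vec{\bar S}$ of length $\delta$ with $H_\kappa\cap\bN=\bigcup_{j<\delta}\bar Q_j$ and $\bar S_i=S$. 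Apply $\infty$-subcompleteness: in a generic extension there is $\sigma':\bN\prec N$ with $\sigma'(\bar S)=\sigma(\bar S)$ and $\sigma'``\bar G\sub G$; let $Y=\ran(\sigma')$. Then $\pi=\sigma'\circ\sigma^{-1}:N|X\To N|Y$ is the desired isomorphism fixing $a$. The heart of the matter is that $Y\cap H_\kappa=\sigma'``(H_\kappa\cap\bN)=\bigcup_{j<\delta}\sigma'``\bar Q_j$; using $\sigma'``\bar G\sub G$ one checks this equals $\bigcup_{j<\delta}Q^{G}_j\cap$ (something) — more precisely, since the $\P$-generic object's $\delta$-th partial sequence is forced to collapse to an element of $S$ when $\delta\in T_i$, one gets $Y\cap H_\kappa\in S$ in the extension, hence in $\V$ by absoluteness (stationarity/membership in $S$ is $\bigcup\mathcal S$-downward and the relevant statement is $\Sigma_0$ in the parameters). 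Clause \ref{item:UnionSpreadOutOnLeftovers} is handled the same way, working with $\bigcup\mathcal{S}$ in place of $S$ and with $\delta$ in the "leftover" set, using clause (3) of Definition~\ref{def:ForcingDSRP} at $\delta$ to force the top partial union into $S_{t(\delta)}\in\mathcal{S}$.

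The main obstacle, and the place requiring genuine care rather than bookkeeping, is the bidirectional bridge between the "lifting" language of $\infty$-subcompleteness (Definition~\ref{def:(ininifty-)subcompleteness} and Definition~\ref{def:SpreadOutOnA}) and the "generic elementary submodel" language in which the forcing $\P^{\DSRP}_{\mathcal S,\vec T}$ most naturally interacts with its generic object: one must choose $\bar s$, the ambient model $N=L_\tau^A$, and the coded parameters so that the elementary lift $\sigma'$ (equivalently the isomorphism $\pi$) actually transports the $\bN$-generic diagonal chain to a genuine condition of $\P$ with the correct top node \emph{and} the correct value at coordinate $t(\delta)$ (or $i_0$), simultaneously matching the $\sigma'(\bar S)=\sigma(\bar S)$ requirement. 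This is exactly the analog of the Feng--Jech argument underlying Fact~\ref{fact:SisProjStatIffP_SisSSP}, but bent through the subcompleteness machinery as in the proof of Theorem~\ref{thm:SpreadOutIffinfSCProjectiveStationary}; I expect the write-up to be able to cite that theorem's proof heavily and to treat the three-case analysis above in parallel with the three cases of Theorem~\ref{thm:SSP-AdequacyIsPSonVecT}, so that only the case where $\delta\in\bigtriangledown_{i<\omega_1}T_i$-type bookkeeping (deciding $t(\delta)\ge\delta$ and freely extending $\vec S$) needs to be spelled out in detail.
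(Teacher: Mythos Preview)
Your forward direction $(1)\Rightarrow(2)$ is correct and matches the paper's proof, with the three cases permuted; in your Case~2 ($\delta\notin D$) note that no lifting is needed at all --- $\sigma$ itself works, since clause~(3) of Definition~\ref{def:ForcingDSRP} is vacuous at $\delta$, so the condition $q=\langle\seq{\sigma(\bar Q_i)}{i<\delta}\verl(X\cap H_\kappa),\seq{\sigma(\bar S_i)}{i<\delta}\rangle$ already lies in $\P$.

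There is a genuine gap in the reverse direction. You obtain $\sigma'$ and $Y=\ran(\sigma')$ only in $\V[G]$, and then argue that ``$Y\cap H_\kappa\in S$ in the extension, hence in $\V$ by absoluteness.'' But absoluteness of the $\Sigma_0$ membership statement is not the issue: the definition of \emph{spread out on $T_{i_0}$} requires the isomorphism $\pi:N|X\to N|Y$ --- and hence $Y$ itself --- to exist in $\V$, and a priori $\sigma'$, $Y$, $\pi$ live only in $\V[G]$. The paper closes this gap by invoking countable distributivity of $\P$: since $\sigma'$ is a function with countable domain $\bN$ into the ground-model set $N$, and $\P$ adds no new $\omega$-sequences of ground-model elements, $\sigma'\in\V$. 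You explicitly flagged countable distributivity as ``available in the forward direction'' only; in fact it is equally essential, and equally available, for $(2)\Rightarrow(1)$. Since $\infty$-subcomplete forcings preserve stationary subsets of $\omega_1$, $\P\in\infSC$ gives $\P\in\SSP$, whence by Theorem~\ref{thm:SSP-AdequacyIsPSonVecT} every $S\in\mathcal{S}$ is projective stationary on each $T_i$, and Lemma~\ref{lem:CountableDistributivityOfP-DSRP} applies.
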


\begin{proof} Let $D=\bigcup_{i<\omega_1}T_i$, let $t:D\To\omega_1$ be defined by $\alpha\in T_{t(\alpha)}$, and let $\P=\P^{\DSRP}_{\mathcal{S},\vec{T}}$. We treat each implication separately.

\ref{item:SpreadOutOnUnionOfVecT}$\implies$\ref{item:InfSCAdequate}:
Assuming that $\mathcal{S}$ is spread out on $T_i$, for every $i<\omega_1$, we have to show that $\P$ is $\infty$-subcomplete. To this end, let $\theta$ be large enough for Definition \ref{def:SpreadOutOnA} to apply to every $T_i$, every $S\in\mathcal{S}$, as well as to $\bigcup\mathcal{S}$ and $D\ohne\bigtriangleup_{i<\omega_1}T_i$. Let $N=L_\tau^A\models\ZFCm$ with $H_\theta\sub N$, and let $\P\in X\prec N$ be countable and full. Let $a$ be some member of $X$, and let $\sigma:\bN\To X$ be the inverse of the Mostowski collapse of $X$, $\bN$ transitive. Let $\bP=\sigma^{-1}(\P_S)$, $\bar{a}=\sigma^{-1}(a)$, and let $\bG\sub\bP$ be $\bN$-generic. 
As usual, we may assume that certain parameters are in $X$; see \cite[p.~116, Lemma 2.5]{Jensen2014:SubcompleteAndLForcingSingapore}. Here, we will assume that $\mathcal{S},\vec{T}\in X$.
Let $\bkappa=\sigma^{-1}(\kappa)$, $\bar{\mathcal{S}}=\sigma^{-1}(\mathcal{S})$ and $\vec{\bar{T}}=\sigma^{-1}(\vec{T})$. It follows from Lemma \ref{lem:PropertiesOfTheGeneric} that if we set
$\vec{\bar{Q}}=\bigcup\{\vQ^{\bar{p}}\st\bar{p}\in\bar{G}\}$ and $\vec{\bS}=\bigcup\{\vS^{\bar{p}}\st\bar{p}\in\bar{G}\}$, then $\vec{\bar{Q}}$ and $\vec{\bS}$ are sequences of length $\omega_1^\bN$.

Let $\delta=X\cap\omega_1=\omega_1^\bN$.

\noindent\emph{Case 1:} $\delta\notin D$.

In this case we define
\[q=\big\langle\seq{\sigma(\bar{Q}_i)}{i<\delta}\verl(X\cap H_\kappa)\ ,\ \seq{\sigma(\bar{S}_i)}{i<\delta}\big\rangle.\]
Then $q\in\P$, since $X\cap\omega_1\notin D$. Moreover, $q$ extends every member of $\sigma``\bar{G}$. Thus, $q$ forces that $\sigma$ itself satisfies the subcompleteness conditions \ref{item:FirstSubcompletenessCondition}-\ref{item:SupremumCondition} of Definition \ref{def:(ininifty-)subcompleteness}.

\noindent\emph{Case 2:} $\delta\in D$.

Let $i_0<\omega_1$ be such that $\delta\in T_{i_0}$, that is, $i_0=t(\delta)$.

\noindent\emph{Case 2.1:} $i_0<\delta$.

In this case, let $\bar{S}=\bS_{i_0}$. $\bar{S}$ is then in $\bar{\mathcal{S}}$, and so, $S=\sigma(\bar{S})\in\mathcal{S}\cap X$. In particular, $S$ is spread out on $T_{i_0}$. Moreover, $T_{i_0}\in X$.
So, since $\delta=X\cap\omega_1\in T_{i_0}$, we can choose a $Y\prec N$ with $Y\cap H_\kappa\in S$ and an isomorphism $\pi:N|X\To N|Y$ that fixes $a$, $S$ and $\P$. Let $\sigma'=\pi\circ\sigma:\bN\prec N$.
Let
\[q=\big\langle\seq{\sigma'(\bar{Q}_i)}{i<\delta}\verl(Y\cap H_\kappa)\ ,\ \seq{\sigma'(\bS_i)}{i<\delta}\big\rangle.\]
Since $Y\cap H_\kappa\in S$, it follows that $q\in\P$ (note that $X\cap\omega_1=Y\cap\omega_1=\delta\in T_{i_0}$, and $Y\cap H_\kappa\in S=\sigma(\bS_{i_0})=\sigma'(\bS_{i_0})$), and whenever $G\ni q$ is $\P_S$-generic over $\V$, then $\sigma'``\bG\sub G$. Since $\sigma'(\bar{a})=a$, the conditions defining $\infty$-subcompleteness are satisfied.

\noindent\emph{Case 2.2:} $i_0\ge\delta$.

In this case, $\delta\in Z=D\ohne\bigtriangledown_{i<\omega_1}T_i$. By assumption, $\bigcup\mathcal{S}$ is spread out on $Z$. Moreover, $\bigcup{\mathcal{S}}$ and $Z$ are in $X$. Let $Y$, $\sigma'$ be such that $\sigma':N|X\To N|Y$ is an isomorphism fixing $a$, $\mathsf{S}$, $\vT$ and $\P$, and such that $Y\cap H_\kappa\in\bigcup\mathcal{S}$. Let $S\in\mathcal{S}$ be such that $Y\cap H_\kappa\in S$. Let $\vS'$ be a sequence of length $i_0+1$ extending $\seq{\sigma'(\bS_i)}{i<\delta}$ with $S'_{i_0}=S$ and $S'_i\in\mathsf{S}$ for every $i\le i_0$. Let
\[q=\big\langle\seq{\sigma'(\bar{Q}_i)}{i<\delta}\verl(Y\cap H_\kappa),\vS'\big\rangle.\]
Then $q$ is a condition, forcing that $\sigma'``\bar{G}\sub\dot{G}$.

\ref{item:InfSCAdequate}$\implies$\ref{item:SpreadOutOnUnionOfVecT}:
To prove that condition \ref{item:EverybodySpreadOutOnEachPiece} of Definition \ref{def:SpreadOutOnVecT} holds, fix an $S\in\mathcal{S}$ and an $i_0<\omega_1$.
Let $\theta$ be large enough to verify that $\P$ is $\infty$-subcomplete.
Let $\tau,a,X,A,N$ be as in Definition \ref{def:SpreadOutOnA}. So $X\prec N=L_\tau^A$ is countable and full, $S,a,T_{i_0}\in X$, and suppose that $\delta=X\cap\omega_1\in T_{i_0}$, that is, $t(\delta)=i_0$. By (a variation of) Fact \ref{fact:WeaklySpreadOutImpliesSpreadOut}, we may assume that $X$ contains certain parameters we care about. So let us assume that $\mathcal{S},\vec{T}\in X$. Since $T_{i_0}$ and $\vec{T}$ are in $X$, it follows that $i_0\in X$, and hence that $i_0<\delta$.
Let $\sigma:\bN\To X$ be the transitive isomorph of $X$, and let $\bar{\P}$, $\bar{S}$, $\bar{\mathcal{S}}$, $\vec{\bar{T}}$ be the preimages of $\P$, $S$, $\mathcal{S}$, $\vec{T}$ under $\sigma$, respectively.

Let $\bar{G}$ be $\bar{\P}$-generic over $\bN$, containing a condition $\bar{p}$ such that $S^{\bar{p}}_{i_0}=\bar{S}$. By assumption, there is a condition $q\in\P$ such that whenever $G$ is $\P$-generic over $\V$ and contains $q$, then there is in $\V[G]$ an elementary embedding $\sigma':\bN\To N$ with $\sigma'(\bar{a})=a$, $\sigma'(\bar{S})=S$, $\sigma'(\bar{\mathcal{S}})=\mathcal{S}$, $\sigma'(\vec{\bar{T}})=\vT$, and such that $\sigma'``\bar{G}\sub G$. Note that for any $S'\in\mathcal{S}$, and any $i<\omega_1$, $S'$ is projective stationary on $T_i$, since $S'$ is even spread out on $T_i$. This can be easily shown directly.
Hence, by Lemma \ref{lem:CountableDistributivityOfP-DSRP}, $\P$ is countably distributive, so that $\sigma'$ already exists in $\V$. We have already argued that the union of the first coordinates of conditions in $\bar{G}$ is of the form $\seq{\bar{Q}_i}{i<\omega_1^\bN}$, where $\bigcup_{i<\delta}\bar{Q}_i=H_\bkappa^\bN$, and that the union of the second coordinates is a sequence $\seq{\bar{S}_i}{i<\delta}$. Now let $r\in G$ be a condition with $\delta^r\ge\delta$, and let $Y=\ran(\sigma')$. Then
\[Q^r_\delta=\bigcup_{i<\delta}Q^r_i=\bigcup_{i<\delta}\sigma'``\bar{Q}_i=\sigma'``H_\bkappa^\bN=Y\cap H_\kappa.\]
Moreover, $Y\cap\omega_1=X\cap\omega_1\in T_{i_0}$, so that $Q^r_\delta\in S^r_{i_0}=\sigma'(\bar{Q}_{i_0})=S$, by clause (3) of Definition \ref{def:ForcingDSRP}. Thus, $Y\cap H_\kappa\in S$, and letting $\pi=\sigma'\circ\sigma^{-1}$, we have that $\pi:N|X\prec N|Y$ is an isomorphism fixing $a$, showing that $S$ is spread out on $T_{i_0}$.

To prove condition \ref{item:UnionSpreadOutOnLeftovers} of Definition \ref{def:SpreadOutOnVecT}, we start in the same setup, but we assume that $\delta\in\bigcup_{i<\omega_1}T_i\ohne\bigtriangledown_{i<\omega_1}T_i$, that is, $\delta\in T_{i_0}$, where $i_0\ge\delta$. Let $\bG$ be generic over $\bN$ for $\bar{\P}$, and let $q\in\P$ force the existence of a $\sigma':\bN\prec N$ as before, moving $\vec{\bT}$, $\bar{\P}$, $\bar{\mathcal{S}}$ and $\bar{a}$ the same way as $\sigma$, and so that if $G$ is $\P$-generic with $q\in G$, then $\sigma'``\bG\sub G$. As before, it follows that $\sigma'\in\V$. Let $r\in G$ be such that $\delta^r\ge\delta$. It follows that, letting $Y=\ran(\sigma')$, $Q^r_\delta=Y\cap H_\kappa$. So, since $\delta\in T_{i_0}$, $i_0<\lambda^r$ and $Q^r_\delta\in S^r_{i_0}\in\mathcal{S}$. Hence, $Y\cap H_\kappa\in\bigcup\mathcal{S}$, and $\pi=\sigma'\circ\sigma^{-1}$ can serve as our wanted isomorphism.
\end{proof}

\section{Consequences of the $\Gamma$-fragment of \DSRP}
\label{subsec:DSRPconsequences}

Now that we have characterizations of the pairs $\kla{\mathcal{S},\vT}$ that are $\Gamma$-projective stationary, if $\Gamma$ is either the class of stationary set preserving or subcomplete forcing notions, we should like to describe some consequences of the corresponding $\Gamma$-fragment of $\DSRP$. First, let us summarize the most important consequence of what was done in Sections \ref{sec:SSP-DSRP} and \ref{sec:SC-DSRP}.

\begin{thm}
\label{thm:Gamma-DSRPimpliesExactDSRP}
Let $\Gamma$ be either the class of stationary set preserving or of $\inf$-subcomplete forcing notions. Let $\kappa>\omega_1$ be regular, and suppose that $\Gamma$-$\DSRP(\kappa)$ holds. Then:
\begin{enumerate}[label=(\alph*)]
\item
\label{item:SummaryTheoremPartA}
If $\mathcal{S}\neq\leer$ is such that every $S\in\mathcal{S}$ is $\Gamma$-projective stationary in $H_\kappa$, then $\eDSRP(\mathcal{S})$ holds.	
\item
\label{item:SummaryTheoremPartB}
If $A\sub\omega_1$ is a stationary set such that every $S\in\mathcal{S}\neq\leer$ is $\Gamma$-projective stationary in $H_\kappa$ on $A$, then $\DSRP(\mathcal{S})$ holds.	
\end{enumerate}
\end{thm}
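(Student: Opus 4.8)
The plan is to reduce Theorem~\ref{thm:Gamma-DSRPimpliesExactDSRP} to the assumed principle $\Gamma$-$\DSRP(\kappa)$ by producing, for the given collection $\mathcal{S}$, an appropriate sequence $\vec{T}$ witnessing that $\kla{\mathcal{S},\vec{T}}$ is $\Gamma$-projective stationary, and then invoking the characterizations (Theorem~\ref{thm:SSP-AdequacyIsPSonVecT} for $\Gamma=\SSP$, Theorem~\ref{thm:CharacterizationOfInfSCAdquacy} for $\Gamma=\infSC$) together with the implications recorded in Remark~\ref{remark:GettingExactDSRPwhenTisTotal}.

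For part~\ref{item:SummaryTheoremPartA}: fix $\mathcal{S}\neq\leer$ with every $S\in\mathcal{S}$ being $\Gamma$-projective stationary in $H_\kappa$. Let $\vec{T}=\seq{T_i}{i<\omega_1}$ be a \emph{maximal} partition of $\omega_1$ into stationary sets; such partitions exist by \cite[Remark 3.17]{Fuchs:CanonicalFragmentsOfSRP}. In the $\SSP$ case, ``$S$ is $\SSP$-projective stationary'' means exactly ``$S$ is projective stationary'' by Fact~\ref{fact:SisProjStatIffP_SisSSP}; by Remark~\ref{rem:ProjStatOnVecTifTisMaximal}, $\mathcal{S}$ being projective stationary on the maximal $\vec{T}$ is then equivalent to every $S\in\mathcal{S}$ being projective stationary, which holds. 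So by Theorem~\ref{thm:SSP-AdequacyIsPSonVecT}, $\kla{\mathcal{S},\vec{T}}$ is $\SSP$-projective stationary. In the $\infSC$ case, ``$S$ is $\infSC$-projective stationary'' is equivalent to ``$S$ is spread out'' by Theorem~\ref{thm:SpreadOutIffinfSCProjectiveStationary}; by Remark~\ref{rem:SpreadOutOnvecTSimplified}, $\mathcal{S}$ being spread out on the maximal $\vec{T}$ is equivalent to every $S\in\mathcal{S}$ being spread out, which holds; so by Theorem~\ref{thm:CharacterizationOfInfSCAdquacy}, $\kla{\mathcal{S},\vec{T}}$ is $\infSC$-projective stationary. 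In either case, $\Gamma$-$\DSRP(\kappa)$ gives $\DSRP(\mathcal{S},\vec{T})$, and since $\bigcup_{i<\omega_1}T_i=\omega_1$ (the partition is onto), Remark~\ref{remark:GettingExactDSRPwhenTisTotal}(2) yields $\eDSRP(\mathcal{S})$.

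For part~\ref{item:SummaryTheoremPartB}: fix a stationary $A\sub\omega_1$ and $\mathcal{S}\neq\leer$ such that every $S\in\mathcal{S}$ is $\Gamma$-projective stationary on $A$ — meaning, in the $\SSP$ case, projective stationary on $A$ in the sense of Definition~\ref{def:ProjectiveStationaryOnA}, and in the $\infSC$ case, spread out on $A$ in the sense of Definition~\ref{def:SpreadOutOnA}. Now take a partition $\vec{T}=\seq{T_i}{i<\omega_1}$ of $A$ (rather than of all of $\omega_1$) into stationary sets that is maximal \emph{within $A$}, i.e.\ such that $A=\bigcup_{i<\omega_1}T_i$ and every stationary subset of $A$ meets some $T_i$ in a stationary set; again existence is by the method of \cite[Remark 3.17]{Fuchs:CanonicalFragmentsOfSRP} applied inside $A$. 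Then $\bigcup_{i<\omega_1}T_i\ohne\bigtriangledown_{i<\omega_1}T_i$ is nonstationary by maximality within $A$, so clause~\ref{item:StrangeCondition} of Definition~\ref{def:ProjectiveStationaryOnVecT} (resp.\ clause~\ref{item:UnionSpreadOutOnLeftovers} of Definition~\ref{def:SpreadOutOnVecT}) is vacuous, and for clause~\ref{item:PSonEachTi} (resp.\ \ref{item:EverybodySpreadOutOnEachPiece}) we need each $S\in\mathcal{S}$ to be projective stationary (resp.\ spread out) on each $T_i$; but $T_i\sub A$, so this follows from being projective stationary (resp.\ spread out) on $A$. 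Hence $\mathcal{S}$ is projective stationary on $\vec{T}$ (resp.\ spread out on $\vec{T}$), so by Theorem~\ref{thm:SSP-AdequacyIsPSonVecT} (resp.\ Theorem~\ref{thm:CharacterizationOfInfSCAdquacy}), $\kla{\mathcal{S},\vec{T}}$ is $\Gamma$-projective stationary, and $\Gamma$-$\DSRP(\kappa)$ gives $\DSRP(\mathcal{S},\vec{T})$, whence $\DSRP(\mathcal{S})$ by Remark~\ref{remark:GettingExactDSRPwhenTisTotal}(1).

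The substantive points that need care rather than the main obstacle: first, checking that ``projective stationary / spread out on $A$'' genuinely transfers to any stationary subset $T_i\sub A$ — this is immediate from the definitions, since the witnessing condition only constrains behavior at points $X$ with $X\cap\omega_1$ in the relevant set, and shrinking that set only weakens the hypothesis. Second, one should note that in part~\ref{item:SummaryTheoremPartB} we do \emph{not} get exactness: since $\bigcup_{i<\omega_1}T_i=A\subsetneq\omega_1$ in general, Remark~\ref{remark:GettingExactDSRPwhenTisTotal}(2) does not apply, which is why only $\DSRP(\mathcal{S})$ is claimed. The one place requiring a genuine (if routine) argument is the existence of a maximal partition of an arbitrary stationary $A$ into stationary pieces; this is the relativization to $A$ of the standard Ulam/Solovay-style construction cited in \cite[Remark 3.17]{Fuchs:CanonicalFragmentsOfSRP}, and presents no real difficulty.
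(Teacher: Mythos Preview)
Your proposal is correct and follows essentially the same approach as the paper: choose a maximal partition $\vec{T}$ of $\omega_1$ (for part~\ref{item:SummaryTheoremPartA}) or of $A$ (for part~\ref{item:SummaryTheoremPartB}), invoke Remarks~\ref{rem:ProjStatOnVecTifTisMaximal}/\ref{rem:SpreadOutOnvecTSimplified} together with the characterization Theorems~\ref{thm:SSP-AdequacyIsPSonVecT}/\ref{thm:CharacterizationOfInfSCAdquacy} to get $\Gamma$-projective stationarity of $\kla{\mathcal{S},\vec{T}}$, apply $\Gamma$-$\DSRP(\kappa)$, and conclude via Remark~\ref{remark:GettingExactDSRPwhenTisTotal}. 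Your write-up is in fact more explicit than the paper's, which for part~\ref{item:SummaryTheoremPartB} simply says ``similar, working with a maximal partition $\vec{T}$ of $A$''; one small caveat is that the transfer of ``spread out on $A$'' to ``spread out on $T_i$'' for $T_i\sub A$ is not quite literally immediate from Definition~\ref{def:SpreadOutOnA} (since the definition requires the relevant set $T_i$ to lie in $X$, not $A$), but it goes through via the weakly-spread-out equivalence of Fact~\ref{fact:WeaklySpreadOutImpliesSpreadOut}, exactly as the paper does elsewhere.
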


\begin{proof} Part \ref{item:SummaryTheoremPartA}:
if $\Gamma$ is the class of stationary set preserving forcing notions,  then $\Gamma$-projective stationarity is just the usual concept of projective stationarity. So let $\mathcal{S}$ be a nonempty collection of projective stationary sets in $H_\kappa$. Let $\vT$ be a maximal partition of $\omega_1$ into stationary sets. By Remark \ref{rem:ProjStatOnVecTifTisMaximal}, $\kla{\mathcal{S},\vT}$ is \SSP-projective stationary, so by assumption, $\DSRP(\mathcal{S},\vT)$ holds. But since $\vT$ is a partition of all of $\omega_1$, this implies $\eDSRP(\mathcal{S})$, by Remark \ref{remark:GettingExactDSRPwhenTisTotal}.

The case where $\Gamma$ is the class of all $\inf$-subcomplete forcing notions is handled similarly. This time, $\Gamma$-projective stationarity means being spread out. Given $\mathcal{S}$ and a partition $\vT$ as above, it follows by Remark \ref{rem:SpreadOutOnvecTSimplified} that $\kla{\mathcal{S},\vT}$ is $\infSC$-projective stationary, so that $\DSRP(\mathcal{S},\vT)$ holds, which again implies $\eDSRP(\mathcal{S})$, as $\vT$ is a partition of $\omega_1$.

Part \ref{item:SummaryTheoremPartB} is similar. We can work with a maximal partition $\vT$ of $A$ into stationary sets now.
\end{proof}

The relationship between the diagonal strong reflection principle and other diagonal reflection principles is maybe best understood in an analogy to the relationship between the strong reflection principle and other reflection principles. In fact, it may be easiest to understand the difference by thinking about Friedman's problem and the reflection principle, in the context of reflection of stationary sets of ordinals. Friedman's problem at an uncountable regular cardinal $\kappa$ greater than $\omega_1$ says that whenever $A\sub S^\kappa_\omega$ is stationary, then there is a closed subset $C$ of $A$ of order type $\omega_1$. Letting $\rho=\sup C$, then, $A\cap\rho$ is stationary, that is, $A$ \emph{reflects} at $\rho$. But $A\cap\rho$ is not only stationary; it contains a club. In preparation for the following subsections, let us define some concepts that capture the difference between reflection in the usual sense and the kind of reflection resulting from strong reflection principles. The terminology around exact reflection comes from \cite[Def.~3.13]{Fuchs:CanonicalFragmentsOfSRP}.

\begin{defn}
Let $\kappa$ be an ordinal of uncountable cofinality, and let $A\sub \kappa$ be stationary in $\kappa$. An ordinal $\rho<\kappa$ of uncountable cofinality is a \emph{reflection point} of $A$ if $A\cap\rho$ is stationary in $\rho$. It is an \emph{exact reflection point} of $A$ if $A\cap\rho$ contains a club in $\rho$. Given a regular cardinal $\delta$, the $\delta$-\emph{trace} of $A$, $\Tr_\delta(A)$, is the set of all reflection points of $A$ that have cofinality $\delta$, and the \emph{exact $\delta$-trace} of $A$, $\eTr_\delta(A)$, is the set of all exact reflection points of $A$ that have cofinality $\delta$.

If $\mathcal{S}$ is a collection of stationary subsets of $\kappa$, then $\rho$ is a \emph{simultaneous reflection point} of $\mathcal{S}$ if $\rho$ is a reflection point of every $A\in\mathcal{S}$. It is an \emph{exact simultaneous reflection point} of $\mathcal{S}$ if it is a simultaneous reflection point of $\mathcal{S}$ and $(\bigcup\mathcal{S})\cap\rho$ contains a club in $\rho$. Again fixing a regular cardinal $\delta$, the \emph{$\delta$-trace} of $\mathcal{S}$, $\Tr_\delta(\mathcal{S})$, is the set of all simultaneous reflection points of $\mathcal{S}$ that have cofinality $\delta$, and the \emph{exact $\delta$-trace} of $\mathcal{S}$, $\eTr_\delta(\mathcal{S})$, is the set of all exact simultaneous reflection points of $\mathcal{S}$ that have cofinality $\delta$.

	Since we will be mainly interested in the case that $\delta=\omega_1$, we will drop mention of $\delta$ if $\delta=\omega_1$, that is, $\eTr(\mathcal{S})$ means $\eTr_{\omega_1}(\mathcal{S})$. 
\end{defn}

Thus, Friedman's problem at $\kappa$ says that every stationary subset of $S^\kappa_\omega$ has an exact reflection point. In fact, let us define the exact versions of some classical reflection principles for stationary sets of ordinals.

\begin{defn}
Suppose $\lambda$ is a cardinal of uncountable cofinality. Let $A\sub \lambda$. Let $\kappa$ be a cardinal, and let $\delta$ be a regular cardinal. Then $\Refl_\delta({<}\kappa,A)$ says that whenever $\mathcal{S}$ is a collection of stationary subsets of $A$ that has cardinality less than $\kappa$, then $\Tr_\delta(\mathcal{S})\neq\leer$. We write $\Refl_\delta(\kappa,A)$ for $\Refl_\delta({<}\kappa^+,A)$.

Similarly, $\eRefl_\delta({<}\kappa,A)$ says that whenever $\mathcal{S}$ is a collection of stationary subsets of $A$ that has size less than $\kappa$, then $\eTr_\delta(\mathcal{S})\neq\leer$. As before, we write $\eRefl_\delta(\kappa,A)$ for $\eRefl_\delta({<}\kappa^+,A)$.	

And as before, we will drop mention of $\delta$ if $\delta=\omega_1$, so that $\eRefl(\kappa,A)$ means $\eRefl_{\omega_1}(\kappa,A)$.
\end{defn}

We are here most concerned with the principles of the form $\Refl(\omega_1,A)$ and $\eRefl(\omega_1,A)$. It is shown in \cite{Fuchs:CanonicalFragmentsOfSRP} that $\eRefl(\omega_1,A)$ is equivalent to a simultaneous version of Friedman's Problem that has its origins in \cite{FMS:MM1}:

\begin{obs}[{\cite[Obs.~3.14]{Fuchs:CanonicalFragmentsOfSRP}}]
Let $\kappa>\omega_1$ be regular and fix a stationary subset $A$ of $\kappa$. The following are equivalent:
\begin{enumerate}
\item Whenever $\mathcal{S}=\{A_i\st i<\omega_1\}$ is a set of stationary subsets of $A$, there is a partition $\seq{T_i}{i<\omega_1}$ of $\omega_1$ into stationary sets and a normal function $f:\omega_1\To\kappa$ such that for every $i<\omega_1$, $f``T_i\sub A_i$.
\item $\eRefl(\omega_1,A)$ holds.
\item For any set $\mathcal{S}$ of stationary subsets of $A$ that has size $\omega_1$, $\eTr(\mathcal{S})$ is stationary in $\kappa$.	
\end{enumerate}
\end{obs}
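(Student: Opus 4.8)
The plan is to prove the cycle of implications $(1)\Rightarrow(2)\Rightarrow(3)\Rightarrow(1)$, since $(3)\Rightarrow(1)$ and $(2)\Rightarrow(1)$ have essentially the same flavor (extract a witness for the ``$f``T_i\subseteq A_i$'' statement from a single simultaneous exact reflection point), and $(1)\Rightarrow(2)$, $(2)\Rightarrow(3)$ build up the strength. So I would actually organize it as $(1)\Rightarrow(3)\Rightarrow(2)\Rightarrow(1)$, or run $(1)\Leftrightarrow(2)$ and $(2)\Leftrightarrow(3)$ separately; the cleanest is probably $(1)\Rightarrow(2)$, $(2)\Rightarrow(3)$, $(3)\Rightarrow(1)$.

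For $(3)\Rightarrow(2)$: this is immediate, since a nonempty set is nonempty whenever it is stationary. For $(2)\Rightarrow(1)$: given $\mathcal{S}=\{A_i\st i<\omega_1\}$, pick $\rho\in\eTr(\mathcal{S})$, so $\rho<\kappa$ has cofinality $\omega_1$, each $A_i\cap\rho$ is stationary in $\rho$, and $\bigl(\bigcup_i A_i\bigr)\cap\rho$ contains a club $C$ in $\rho$. Fix a continuous increasing cofinal map $e:\omega_1\To\rho$ whose range is contained in $C$; then define $T_i=e^{-1}``A_i$ for $i<\omega_1$. Since $\mathrm{ran}(e)\subseteq C\subseteq\bigcup_i A_i$, the $T_i$ cover $\omega_1$; they are pairwise disjoint because the $A_i$ are pairwise disjoint (I'd remark that a collection of stationary sets here can always be thinned to a pairwise disjoint one, or simply assume that — but actually in the present statement we need the $A_i$ themselves pairwise disjoint for the $T_i$ to partition $\omega_1$, so I would either build disjointness in or note it can be arranged); and each $T_i$ is stationary because $e$ is continuous and cofinal, so preimages of stationary subsets of $\rho$ under $e$ are stationary in $\omega_1$ (club preimages are club). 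Setting $f=e$ gives $f``T_i = e``(e^{-1}``A_i)\subseteq A_i$, as wanted. The normality of $f$ is just continuity plus increasing.

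For $(1)\Rightarrow(3)$, the main content: let $\mathcal{S}=\{A_i\st i<\omega_1\}$ have size $\omega_1$, and I must show $\eTr(\mathcal{S})$ is stationary in $\kappa$. Fix a club $E\subseteq\kappa$; I want an exact simultaneous reflection point of cofinality $\omega_1$ inside $E$. Here I would invoke $(1)$ applied to the family $\{A_i\cap E'\st i<\omega_1\}$, where $E'$ is the set of limit points of $E$ (still club, still a set of stationary subsets of $A$ after intersecting — stationary because $E'$ is club), obtaining a partition $\seq{T_i}{i<\omega_1}$ of $\omega_1$ and a normal $f:\omega_1\To\kappa$ with $f``T_i\subseteq A_i\cap E'$. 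Let $\rho=\sup_{i<\omega_1}f(i)$; since $f$ is normal, $\mathrm{cf}(\rho)=\omega_1$, and $\rho\in E$ because $\rho$ is a limit of points of $E'\subseteq E$ and $E$ is closed. The range of $f$ is a club in $\rho$ (continuous cofinal image of $\omega_1$), and it is contained in $\bigcup_i A_i$: indeed $f(\xi)\in A_{i}$ where $\xi\in T_i$, using that the $T_i$ cover $\omega_1$. So $\bigl(\bigcup_i A_i\bigr)\cap\rho$ contains a club, giving exactness. And each $A_i\cap\rho$ is stationary in $\rho$: $f``T_i\subseteq A_i\cap\rho$, and $f``T_i$ is stationary in $\rho$ since $f$ is continuous increasing cofinal and $T_i$ is stationary in $\omega_1$, so the image of a stationary set is stationary. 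Hence $\rho\in\eTr(\mathcal{S})\cap E$, and since $E$ was arbitrary, $\eTr(\mathcal{S})$ is stationary.

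The main obstacle I anticipate is bookkeeping around disjointness and the passage between ``stationary subset of $\kappa$'' and ``stationary subset of $\rho$'' under the reindexing map $f$ (or $e$): one needs that continuous increasing cofinal maps push stationary sets forward to stationary sets and pull club sets back to club sets, which is standard but must be stated carefully, and one needs to be careful that the $T_i$ genuinely partition $\omega_1$ (not merely form an almost disjoint family), which forces the $A_i$ in $\mathcal{S}$ to be taken pairwise disjoint — a harmless normalization I would flag explicitly. Everything else is routine.
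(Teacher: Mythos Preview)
The paper does not give its own proof of this observation; it is quoted from \cite{Fuchs:CanonicalFragmentsOfSRP}. So there is nothing in the present paper to compare against, and your argument must stand on its own.

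Your cycle $(1)\Rightarrow(3)\Rightarrow(2)\Rightarrow(1)$ is the natural organization, and your proofs of $(1)\Rightarrow(3)$ and $(3)\Rightarrow(2)$ are correct as written. In $(1)\Rightarrow(3)$, intersecting with the club $E'$ before invoking (1) is exactly right, and the verification that $\rho=\sup f$ lies in $E$, that $\ran(f)$ is a club inside $\bigcup_i A_i$, and that each $f``T_i$ is stationary in $\rho$ all go through.

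The one point that needs tightening is the disjointification in $(2)\Rightarrow(1)$. You are right that for your $T_i=e^{-1}``A_i$ to partition $\omega_1$ you need the $A_i$ pairwise disjoint, and you are right that this can be arranged, but calling it a ``harmless normalization'' skips over the actual work. The clean way is to disjointify \emph{before} applying (2), at the level of $\kappa$: by recursion on $i<\omega_1$, having chosen pairwise disjoint stationary $A'_j\subseteq A_j$ for $j<i$ such that $A_k\setminus\bigcup_{j<i}A'_j$ is stationary for every $k$, split $A_i\setminus\bigcup_{j<i}A'_j$ into $\kappa$ many stationary pieces by Solovay. For each $k<\omega_1$ at most one of these pieces can contain $A_k\setminus\bigcup_{j<i}A'_j$ modulo nonstationary, so at most $\omega_1<\kappa$ pieces are bad; pick a good one for $A'_i$. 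Now apply (2) to $\{A'_i:i<\omega_1\}$ and run your argument. (Trying to disjointify the pullbacks $e^{-1}``A_i$ directly inside $\omega_1$ is genuinely more delicate, so it matters that you do this at the $\kappa$ level and use $\kappa\ge\omega_2$.) With this made explicit, your proof is complete.
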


Exact simultaneous reflection has consequences on cardinal arithmetic (and this was known since \cite{FMS:MM1}, even though this was not filtered through the simultaneous exact reflection principle):

\begin{fact}[{\cite[Fact 3.15]{Fuchs:CanonicalFragmentsOfSRP}}]
\label{fact:ExactReflectionAndCardinalArithmetic}
Let $\kappa>\omega_1$ be regular, and suppose there is a stationary $A\sub\kappa$ such that $\eRefl(\omega_1,A)$ holds. Then $\kappa^{\omega_1}=\kappa$.
\end{fact}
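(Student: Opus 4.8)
The plan is to derive $\kappa^{\omega_1}=\kappa$ from the existence of a stationary $A\subseteq\kappa$ with $\eRefl(\omega_1,A)$ by using the equivalent ``simultaneous Friedman'' formulation from the preceding observation, applied to a carefully chosen collection $\mathcal{S}$ of $\omega_1$ stationary subsets of $A$ that ``codes'' all of $[\kappa]^{\omega_1}$ (or equivalently all functions $\omega_1\to\kappa$). The point is that since $\kappa\le\kappa^{\omega_1}$ trivially, it suffices to produce an injection (or surjection in the other direction) witnessing $\kappa^{\omega_1}\le\kappa$, and exact simultaneous reflection will be what compresses an arbitrary $\omega_1$-sequence into a bounded initial segment.

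First I would fix a stationary $A\subseteq\kappa$ with $\eRefl(\omega_1,A)$. Since $\eRefl(\omega_1,A)$ is about collections of size $\le\omega_1$ and is in particular nonvacuous, $A$ must meet $S^\kappa_{\omega_1}$-many reflection points; more importantly, by thinning I may assume $A\subseteq S^\kappa_\omega$ if convenient, or simply work with $A$ as given. Now I would argue by contradiction: suppose $\kappa^{\omega_1}>\kappa$. The standard move (this is the FMS-style argument) is to build, for each $\xi<\kappa$, a pairwise ``sufficiently separated'' family, and then use that a normal function $f:\omega_1\to\kappa$ with $f``T_i\subseteq A_i$ is determined by its range $C=\ran(f)$, a club in $\rho=\sup C$ of order type $\omega_1$, together with the partition data — and there are only $\rho^{\omega_1}$-many such for each $\rho<\kappa$, hence only $\sum_{\rho<\kappa}\rho^{\omega_1}=\kappa$-many (using regularity of $\kappa$ and, crucially, that $\rho^{\omega_1}<\kappa$ would be needed, or that we only count the combinatorial object $C$). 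The slicker route: apply the simultaneous Friedman formulation to a family $\mathcal{S}=\{A_i : i<\omega_1\}$ where the $A_i$ are chosen so that from the witnessing normal function $f$ and partition $\seq{T_i}{i<\omega_1}$ one can read off an arbitrary prescribed element of a set of size $\kappa^{\omega_1}$; then since each $f$ lands below some $\rho<\kappa$ and $|[\rho]^{\omega_1}|$-many initial segments exist, one counts the number of possible $(\rho, f)$ pairs and must get $\kappa^{\omega_1}$-many outcomes, forcing $\kappa^{\omega_1}\le \sum_{\rho<\kappa}|{}^{\omega_1}\rho|$.

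The key steps in order: (1) reduce to the simultaneous Friedman statement via the cited observation; (2) set up an injection of $\kappa^{\omega_1}$, or of $[\kappa]^{\omega_1}$, into the set of ``reflection data'' $(\rho, C, \seq{T_i}{i<\omega_1})$ where $\rho<\kappa$, $C\subseteq\rho$ is club of type $\omega_1$, and the $T_i$ partition $\omega_1$; (3) bound the number of such data by $\sum_{\rho<\kappa}\rho^{\omega_1}$; (4) conclude $\kappa^{\omega_1}\le\sum_{\rho<\kappa}\rho^{\omega_1}=\kappa$ using the regularity of $\kappa$ together with the hypothesis — here one needs that for each $\rho<\kappa$, $\rho^{\omega_1}\le\kappa$, which follows because $\rho^+\le\kappa$ and an easy induction (or because otherwise some smaller counterexample to the theorem would already exist — the cleanest phrasing uses that the argument for a fixed $\mathcal{S}$ only needs $\rho^{\omega_1}$ bounded, and in fact the FMS argument encodes a single function, giving $|{}^{\omega_1}\kappa|\le\kappa^{\omega_1}$ trivially and the reverse by the reflection). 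I expect the main obstacle to be step (4): pinning down exactly why $\sum_{\rho<\kappa}\rho^{\omega_1}=\kappa$, i.e.\ ruling out a ``local'' failure of the hypothesis $\rho^{\omega_1}<\kappa$ for $\rho<\kappa$ — the honest fix is to observe that if $\rho^{\omega_1}\ge\kappa$ for some $\rho<\kappa$ then already $\kappa^{\omega_1}=\rho^{\omega_1}\cdot\kappa=\rho^{\omega_1}$, and one runs the same encoding with $\kappa$ replaced by the least such $\rho$, or one simply notes that the simultaneous Friedman function $f$ for a well-chosen $\mathcal{S}$ directly injects ${}^{\omega_1}\kappa$ below the sup, and $\sup<\kappa$ by regularity, so ${}^{\omega_1}\kappa$ injects into $\bigcup_{\rho<\kappa}{}^{\omega_1}\rho$, whence $\kappa^{\omega_1}\le\kappa$ after all. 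I would present this last paragraph carefully as it is where the cardinal arithmetic bookkeeping lives, and cite \cite{FMS:MM1} and \cite[Fact 3.15]{Fuchs:CanonicalFragmentsOfSRP} for the original form of the computation.
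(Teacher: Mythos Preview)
The paper does not actually prove this statement; it is stated as a fact with a citation to \cite{Fuchs:CanonicalFragmentsOfSRP}, so there is no proof in this paper to compare against. That said, your sketch has a genuine gap.

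The problem is in your step (4). You want to inject $[\kappa]^{\omega_1}$ into the set of triples $(\rho,C,\vec T)$ and then bound the number of such triples by $\sum_{\rho<\kappa}\rho^{\omega_1}$. But you never explain why this sum should be at most $\kappa$; in fact, asserting $\rho^{\omega_1}<\kappa$ for all $\rho<\kappa$ is essentially equivalent to what you are trying to prove (think of $\kappa=\mu^+$). Your attempted repairs are circular or unclear: you cannot ``run the same encoding with $\kappa$ replaced by the least such $\rho$,'' because you have no exact reflection hypothesis available below $\rho$, and the sentence about a single Friedman function $f$ ``directly injecting'' all of ${}^{\omega_1}\kappa$ below its sup does not make sense as stated.

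The missing idea is that you should not be counting the auxiliary data $(C,\vec T)$ at all. Fix once and for all, by Solovay's theorem, a partition $\langle A_\xi : \xi<\kappa\rangle$ of $A$ into $\kappa$ many stationary sets. Given $X\in[\kappa]^{\omega_1}$, apply $\eRefl(\omega_1,A)$ to $\{A_\xi : \xi\in X\}$ to obtain an exact simultaneous reflection point $\rho_X<\kappa$. The point of \emph{exactness} is that $\bigcup_{\xi\in X}A_\xi$ contains a club in $\rho_X$, so for $\xi\notin X$ the set $A_\xi\cap\rho_X$ is nonstationary (it misses that club, since the $A_\xi$ are pairwise disjoint). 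Hence $X=\{\xi<\kappa : A_\xi\cap\rho_X\text{ is stationary in }\rho_X\}$ is recoverable from $\rho_X$ alone. Choosing the least such $\rho_X$ gives an injection $[\kappa]^{\omega_1}\hookrightarrow\kappa$, and $\kappa^{\omega_1}=|[\kappa]^{\omega_1}|\le\kappa$ follows immediately with no further cardinal arithmetic needed.
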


It was shown in \cite{FMS:MM1} that $\MM$ implies $\eRefl(\omega_1,S^{\kappa}_\omega)$, for any regular $\kappa>\omega_1$. \Todorcevic showed that already \SRP has this consequence, and in \cite{Fuchs:CanonicalFragmentsOfSRP}, it was shown that the $\infty$-subcomplete fragment of $\SRP$ implies this for $\kappa>2^\omega$.

The strong diagonal reflection principle is a principle of reflection of generalized stationarity, designed to capture exact versions of diagonal reflection. Note that an exact reflection point of some collection of stationary sets is a reflection point of each of those sets, but it is explicitly \emph{not} a reflection point of the complement of the union of these stationary sets. Thus, principles of exact reflection provide \emph{selective reflection:} points at which some sets reflect but others don't. The diagonal reflection principles, introduced by the first author, talk about reflection of generalized stationarity, and in a sense, they try to maximize the collection of sets that reflect. They are thus not designed to produce phenomena of exact reflection. For example, they do not imply $\eRefl(\omega_1,A)$, for any set $A$ stationary in $\omega_2$, since they don't imply that $2^{\omega_1}=\omega_2$, as we will show in Section \ref{sec:DRPlimitations} (compare with Fact \ref{fact:ExactReflectionAndCardinalArithmetic}).

We will present in the following two subsections some consequences of fragments of the diagonal strong reflection principle. First, we will focus on consequences that don't have much to do with the exact reflection \DSRP provides. These filter through certain versions of the diagonal reflection principle that were introduced in \cite{Cox:DRP}. In the subsection after that, we will provide some applications that do make use of the exact quality of the reflection \DSRP provides. These don't follow from the principles of \cite{Cox:DRP}.

\subsection{Consequences that filter through weak diagonal reflection principles}
\label{subsec:FilteredConsequences}

Let us begin by showing that \DSRP implies various ``weak'' diagonal reflection principles of \cite{Cox:DRP}, as well as some slight modifications thereof. For the present purposes, we say that a set $N$ is \emph{internally approachable} if it is the union of an $\in$-chain $\seq{N_\alpha}{\alpha<\omega_1}$ such that for every $\alpha<\omega_1$, $\seq{N_\xi}{\xi<\alpha}\in N$.

\begin{lem}
\label{lem:DSRPimpliesDRP}
Let $\kappa$ be regular, and let $\mathcal{S}\sub\power([H_\kappa]^\omega)$ be a collection of stationary sets such that $\DSRP(\mathcal{S})$ holds. Then
\begin{enumerate}[label=(\arabic*)]
\item
\label{item:DSRPimpliesDRPIA}
The principle $\mathsf{w}\DRP_{\IA}(\mathcal{S})$ holds: whenever $\theta$ is large enough that $\mathcal{S}\sub H_\theta$, there are stationarily many $W\in[H_\theta]^{\omega_1}$ such that:
\begin{enumerate}
\item $W\cap H_\kappa$ is internally approachable,
\item for every $S\in W\cap\mathcal{S}$, $S\cap[W\cap H_\kappa]^\omega$ is stationary in $W\cap H_\kappa$.	
\end{enumerate}
\item
\label{item:DSRPimpliesDRPIA+}
The following slight strengthening of $\mathsf{w}\DRP_{\IA}(\mathcal{S})$ holds: whenever $\theta$ is large enough that $\mathcal{S}\sub H_\theta$, there are stationarily many $W\in[H_\theta]^{\omega_1}$ such that for every $S\in W\cap\mathcal{S}$ and every regular $\bkappa\in W\cap[\omega_2,\kappa]$, $W\cap H_\bkappa$ is internally approachable and $(S\projectdown H_\bkappa)\cap[W\cap H_\bkappa]^\omega$ is stationary in $W\cap H_\bkappa$.
\end{enumerate}
\end{lem}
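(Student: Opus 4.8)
The plan is to derive both items from $\DSRP(\mathcal{S})$ by producing, for a given sufficiently large $\theta$ and a given algebra $\mathcal{A}$ on $H_\theta$, a witness $W \prec \mathcal{A}$ of the required form. First I would invoke $\DSRP(\mathcal{S})$ at a cardinal $\theta'$ large enough that $H_\theta \in H_{\theta'}$ (and $\mathcal{S}, \mathcal{A} \in H_{\theta'}$): there are stationarily many $X \in [H_{\theta'}]^{\omega_1}$ with $\omega_1 \subseteq X$, $\mathcal{A}|X' \prec \mathcal{A}|X'$ for the relevant auxiliary structure, and a diagonal chain $\vec{Q} = \seq{Q_i}{i<\omega_1}$ through $\mathcal{S}$ up to $X$. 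Set $W = X \cap H_\theta$. The key structural facts coming from the definition of "diagonal chain through $\mathcal{S}$ up to $X$" are: (i) $H_\kappa \cap X = \bigcup_{i<\omega_1} Q_i$ and $\vec{Q}\restriction\alpha \in H_\kappa \cap X$ for each $\alpha < \omega_1$, which immediately exhibits $W \cap H_\kappa = X \cap H_\kappa$ as internally approachable, witnessed by the chain $\seq{Q_i}{i<\omega_1}$; and (ii) for every $S \in X \cap \mathcal{S}$, the set $\{i < \omega_1 : Q_i \in S\}$ is stationary in $\omega_1$.

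For item \ref{item:DSRPimpliesDRPIA}, I would argue that (ii) together with (i) gives that $S \cap [W \cap H_\kappa]^\omega$ is stationary in $W \cap H_\kappa$: given any club $C \subseteq [W\cap H_\kappa]^\omega$, one can assume (using elementarity and that $W\cap H_\kappa$ has a filtration by the $Q_i$) that the set of $i<\omega_1$ with $Q_i \in C$ is club, so its intersection with the stationary set from (ii) is nonempty, producing a $Q_i \in S \cap C$. I also need to confirm that $S \in W \cap \mathcal{S}$ corresponds to $S \in X \cap \mathcal{S}$ — this is where I should be careful about whether $\mathcal{S} \in W$ and whether membership is absolute, so I would arrange at the outset that $\mathcal{S}$ (or a code for it) is in the range of the relevant elementary submodel, so that $W \cap \mathcal{S} = X \cap \mathcal{S}$. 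Finally I transfer stationarity in $[H_{\theta'}]^{\omega_1}$ down to stationarity in $[H_\theta]^{\omega_1}$ by the standard projection argument (the map $X \mapsto X \cap H_\theta$ sends a club-many-reflecting set to a stationary set, since $H_\theta \in H_{\theta'}$ and $\omega_1 \subseteq X$ forces $W$ to have size $\omega_1$).

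For item \ref{item:DSRPimpliesDRPIA+}, the additional content is that the reflection happens simultaneously at every regular $\bkappa \in W \cap [\omega_2, \kappa]$, with the relevant projected set $S \projectdown H_\bkappa$. Here I would use that $W \cap H_\bkappa = \bigcup_{i<\omega_1} (Q_i \cap H_\bkappa)$, so $W \cap H_\bkappa$ is again internally approachable via $\seq{Q_i \cap H_\bkappa}{i<\omega_1}$ (a continuous $\in$-chain, using that $\bkappa \in W$ so $H_\bkappa$ is available and the initial segments lie in $W$). Then the set $\{i<\omega_1 : Q_i \cap H_\bkappa \in S\projectdown H_\bkappa\}$ contains $\{i : Q_i \in S\}$, which is stationary by (ii), so the same club-intersection argument as above shows $(S\projectdown H_\bkappa) \cap [W\cap H_\bkappa]^\omega$ is stationary in $W\cap H_\bkappa$.

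The main obstacle I anticipate is the bookkeeping around \emph{which parameters must be absorbed into the elementary submodel} so that $W \cap \mathcal{S}$, $W \cap H_\bkappa$, and the internal approachability filtrations all cohere — in particular making sure that for each relevant $\bkappa$ the initial segments $\seq{Q_i \cap H_\bkappa}{i<\alpha}$ actually lie in $W$, which requires $\bkappa, \kappa, \vec{Q}\restriction\alpha \in X$ and elementarity of $\kla{H_\kappa \cap X, \in} \prec \kla{H_\kappa, \in}$. This is routine but is the place where the argument could go wrong if one is cavalier. The club-to-stationary projection and the "stationarily many $Q_i$ in a given club" step are both standard and I would state them briefly rather than belabor them.
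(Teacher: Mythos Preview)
Your proposal is correct and follows essentially the same approach as the paper: use the diagonal chain $\vec{Q}$ to witness internal approachability of $W\cap H_\kappa$ (and of $W\cap H_{\bar\kappa}$ via $\seq{Q_i\cap H_{\bar\kappa}}{i<\omega_1}$), and combine the stationarity of $\{i:Q_i\in S\}$ with the club of $i$'s for which $Q_i$ is closed under a given function. The one unnecessary complication is the passage to a larger $\theta'$ followed by projection down to $[H_\theta]^{\omega_1}$: the paper simply applies $\DSRP(\mathcal{S})$ directly at $\theta$ and notes that every $W$ supporting a diagonal chain already lies in the desired set, so the containment of stationary sets is immediate and your bookkeeping worry about $W\cap\mathcal{S}$ versus $X\cap\mathcal{S}$ evaporates.
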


\begin{proof}
For \ref{item:DSRPimpliesDRPIA}, let $\theta$ be regular and large enough that $\mathcal{S}\sub H_\theta$. We know by $\DSRP(\mathcal{S})$ that there are stationarily many $W\in[H_\theta]^{\omega_1}$ such that $\omega_1\sub W$ and there is a diagonal chain $\vec{Q}$ through $\mathcal{S}$ up to $W$. We claim that each such $W$ belongs to the set defined in \ref{item:DSRPimpliesDRPIA}.
Note that $\vec{Q}$ witnesses that $W\cap H_\kappa$ is internally approachable. Now let $S\in W\cap\mathcal{S}$. We have to show that $S\cap[W\cap H_\kappa]^\omega$ is stationary in $W\cap H_\kappa$. Let
\[T=\{\alpha<\omega_1\st Q_\alpha\in S\}.\]
Since $\vQ$ is a diagonal chain through $\mathcal{S}$ up to $W$, $T$ is stationary. Now let $f:[W\cap H_\kappa]^{{<}\omega}\To W\cap H_\kappa$. We have to find an $x\in S\cap[W\cap H_\kappa]^\omega$ that's closed under $f$. Clearly, the set of $\alpha<\omega_1$ such that $f``[Q_\alpha]^{{<}\omega}\sub Q_\alpha$ is club in $\omega_1$. Hence, there is such an $\alpha$ in $T$. But then, $x=Q_\alpha\in S$ is as wished.


For \ref{item:DSRPimpliesDRPIA+}, we argue mostly as above. Given a $W$ as above, let $\vQ$ be a diagonal chain through $\mathcal{S}$ up to $W$, and let $\bkappa\in[\omega_2,\kappa]\cap W$. Then the sequence $\seq{Q_\alpha\cap H_\bkappa}{\alpha<\omega_1}$ witnesses that $W\cap H_\bkappa$ is internally approachable. Letting $S\in\mathcal{S}\cap W$, and letting $T$ be the stationary set of countable $\alpha$ such that $Q_\alpha\in S$, we have that for all $\alpha\in T$, $Q_\alpha\cap H_\bkappa\in S\projectdown H_\bkappa$. As above, given $f:[W\cap H_\bkappa]^{{<}\omega}\To W\cap H_\bkappa$, we can now find an $\alpha\in T$ such that $Q_\alpha\cap H_\bkappa$ is closed under $f$, and $Q_\alpha\cap H_\bkappa$ is then in $S\projectdown H_\bkappa$.
\end{proof}

\begin{remark}
\label{rem:Monotonicity}
In the notation of the previous lemma, if $\mathcal{T}\sub\mathcal{S}$, then $\mathsf{w}\DRP_\IA(\mathcal{S})\implies\mathsf{w}\DRP_\IA(\mathcal{T})$.
\end{remark}

This remark drives a point home that was made earlier: one cannot expect to get any phenomena of exact reflection from these principles. Yet they will, by design, imply certain diagonal reflection principles for sequences of stationary sets of ordinals. 

\begin{defn}
\label{def:SomeStationarySets}

The following collections of stationary sets will be focal for our analysis, for a regular cardinal $\kappa>\omega_1$:
\[
\mathcal{S}_{\lifting}(\theta)=\{\lifting(A,[H_\kappa]^\omega)\cap C\st A\sub S^\kappa_\omega\ \text{is stationary in $\kappa$ and
$C\sub[H_\kappa]^\omega$ is club}\}.\]
Further, for a forcing class $\Gamma$, let $\mathcal{S}_\Gamma(\kappa)$ be the collection of all $S\sub[H_\kappa]^\omega$ that are $\Gamma$-projective stationary in $H_\kappa$. 
\end{defn}

In \cite{Cox:DRP}, $\wDRPIA(\kappa)$ was defined as $\wDRPIA(\mathcal{S}_\SSP(\kappa))$, and $\wDRPIA$ states that $\wDRPIA(\kappa)$ holds for every regular $\kappa\ge\omega_2$. Thus, by Lemma \ref{lem:FA(Gamma)impliesGamma-DSRP}, Theorem \ref{thm:SSP-AdequacyIsPSonVecT} and Lemma \ref{lem:DSRPimpliesDRP}, we have the following implications:
\[\MM\implies\SSP\text{-}\DSRP(\kappa)\implies\wDRPIA(\kappa)\]
for every regular $\kappa>\omega_1$. If we similarly define $\infSC$-$\wDRPIA(\kappa)$ to be the principle $\wDRPIA(\mathcal{S}_{\infSC}(\kappa)$, then we obtain the corresponding implications
\[\infSCFA\implies\infSC\text{-}\DSRP(\kappa)\implies\infSC\text{-}\wDRPIA(\kappa)\]
for any regular $\kappa>\omega_1$, using Lemma \ref{lem:FA(Gamma)impliesGamma-DSRP}, Theorem \ref{thm:CharacterizationOfInfSCAdquacy} and Lemma \ref{lem:DSRPimpliesDRP}.



We now aim to find a connection to diagonal reflection principles of stationary sets of ordinals. Combining Theorem \ref{thm:DSRP=>eDSR}, Lemma \ref{lem:LiftingsAreSpreadOut} and Lemma \ref{lem:DSRPimpliesDRP}, we obtain:

\begin{cor}
\label{cor:DSRPimpliesDRPliftings}
Suppose $\kappa$ is a regular cardinal greater than $2^\omega$, and $\infSC$-$\DSRP(\kappa)$ holds. Then
$\mathsf{w}\DRP_\IA(\mathcal{S}_\lifting(\kappa))$ holds.

The same conclusion holds if $\kappa$ is a regular cardinal greater than $\omega_1$ and $\SSP$-$\DSRP(\kappa)$ holds.
\end{cor}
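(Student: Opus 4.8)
The plan is to assemble the corollary from Theorem~\ref{thm:Gamma-DSRPimpliesExactDSRP}\ref{item:SummaryTheoremPartA}, Lemma~\ref{lem:LiftingsAreSpreadOut} (together with the closure properties of spread out sets), and Lemma~\ref{lem:DSRPimpliesDRP}\ref{item:DSRPimpliesDRPIA}. The key reduction is this: for $\Gamma$ either $\infSC$ or $\SSP$, once we know that $\mathcal{S}_\lifting(\kappa)$ is nonempty and that every $S\in\mathcal{S}_\lifting(\kappa)$ is $\Gamma$-projective stationary in $H_\kappa$, Theorem~\ref{thm:Gamma-DSRPimpliesExactDSRP}\ref{item:SummaryTheoremPartA} applied to $\mathcal{S}=\mathcal{S}_\lifting(\kappa)$ yields $\eDSRP(\mathcal{S}_\lifting(\kappa))$ from $\Gamma$-$\DSRP(\kappa)$; since every exact diagonal chain is in particular a diagonal chain, this gives $\DSRP(\mathcal{S}_\lifting(\kappa))$, and then Lemma~\ref{lem:DSRPimpliesDRP}\ref{item:DSRPimpliesDRPIA} delivers $\mathsf{w}\DRP_\IA(\mathcal{S}_\lifting(\kappa))$. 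Nonemptiness is clear, since $S^\kappa_\omega$ is stationary in $\kappa$ and $[H_\kappa]^\omega$ is club, so $\lifting(S^\kappa_\omega,[H_\kappa]^\omega)\in\mathcal{S}_\lifting(\kappa)$.

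It then remains to verify the $\Gamma$-projective stationarity of the members of $\mathcal{S}_\lifting(\kappa)$. A typical such member has the form $\lifting(A,[H_\kappa]^\omega)\cap C$, where $A\sub S^\kappa_\omega$ is stationary in $\kappa$ and $C\sub[H_\kappa]^\omega$ is club. For $\Gamma=\infSC$ and $\kappa>2^\omega$: Lemma~\ref{lem:LiftingsAreSpreadOut} gives that $\lifting(A,[H_\kappa]^\omega)$ is spread out (this is the only place $\kappa>2^\omega$ enters), Observation~\ref{obs:SpreadOutSetsClosedUnderIntersectionWithClubs} gives that intersecting with the club $C$ preserves this, and Theorem~\ref{thm:SpreadOutIffinfSCProjectiveStationary} identifies being spread out with being $\infSC$-projective stationary; so the reduction applies with $\Gamma=\infSC$. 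For $\Gamma=\SSP$ and $\kappa>\omega_1$ regular: by Fact~\ref{fact:SisProjStatIffP_SisSSP}, $\SSP$-projective stationarity is just ordinary projective stationarity, so it suffices to check that $\lifting(A,[H_\kappa]^\omega)\cap C$ is projective stationary. When $\kappa>2^\omega$ this follows from the previous sentence together with Observation~\ref{obs:SpreadOutImpliesProjectiveStationary}; for arbitrary regular $\kappa>\omega_1$ one uses instead the classical fact that $\lifting(A,[H_\kappa]^\omega)$ is projective stationary whenever $A\sub S^\kappa_\omega$ is stationary, together with the preservation of projective stationarity under intersection with clubs (noted in the discussion following Observation~\ref{obs:SpreadOutSetsClosedUnderIntersectionWithClubs}). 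Then the reduction applies with $\Gamma=\SSP$.

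The only ingredient not supplied verbatim by an earlier result in the excerpt is the projective stationarity of $\lifting(A,[H_\kappa]^\omega)$ for general regular $\kappa>\omega_1$ in the $\SSP$ case, and this is where the one genuine (though routine and standard) argument lies: given a stationary $B\sub\omega_1$ and a function $F\colon[H_\kappa]^{<\omega}\to H_\kappa$, fix $\delta\in A$, so $\cf(\delta)=\omega$, and recursively build an $\in$-increasing $\omega$-chain of countable, $F$-closed elementary submodels of a suitable expansion of $H_{\kappa^+}$, arranging that the union $X$ satisfies $\sup(X\cap\kappa)=\delta$ and $X\cap\omega_1\in B$; then $X$ witnesses the required instance of projective stationarity. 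Everything else is bookkeeping with the cited lemmas, so I do not anticipate any real obstacle beyond correctly matching the hypotheses of Theorem~\ref{thm:Gamma-DSRPimpliesExactDSRP} and Lemma~\ref{lem:DSRPimpliesDRP}, and handling the $\kappa\le 2^\omega$ portion of the $\SSP$ case separately from the spread-out machinery.
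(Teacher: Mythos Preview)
Your proposal is correct and follows essentially the same route the paper indicates: combine the fact that liftings (intersected with clubs) are $\Gamma$-projective stationary, apply Theorem~\ref{thm:Gamma-DSRPimpliesExactDSRP}\ref{item:SummaryTheoremPartA} to obtain $\DSRP(\mathcal{S}_\lifting(\kappa))$, and then invoke Lemma~\ref{lem:DSRPimpliesDRP}\ref{item:DSRPimpliesDRPIA}. You are in fact more careful than the paper's one-line justification, explicitly handling the club intersection via Observation~\ref{obs:SpreadOutSetsClosedUnderIntersectionWithClubs} and treating the $\SSP$ case for arbitrary regular $\kappa>\omega_1$ separately rather than routing through spread-out sets.
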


The principles of reflection of stationary sets of ordinals we are interested in here are of the following form.

\begin{defn}[{see \cite{Fuchs:DiagonalReflection}, \cite{Fuchs-LambieHanson:SeparatingDSR} and \cite{Larson:SeparatingSRP}}]
\label{defn:DiagonalReflection}
Let $\lambda$ be a regular cardinal, let $S\subseteq\lambda$ be stationary, and let $\kappa<\lambda$. The \emph{diagonal stationary reflection principle} $\DSR({<}\kappa,S)$ says that whenever $\langle S_{\alpha,i}\st\alpha<\lambda,i<j_\alpha\rangle$ is a sequence of stationary subsets of $S$, where $j_\alpha<\kappa$ for every $\alpha<\lambda$, then there are an ordinal $\gamma<\lambda$ of uncountable cofinality and a \emph{club} $F\subseteq\gamma$ such that for every $\alpha\in F$ and every $i<j_\alpha$, $S_{\alpha,i}\cap\gamma$ is stationary in $\gamma$.

The version of the principle in which $j_\alpha\le\kappa$ is denoted $\DSR(\kappa,S)$.

We will denote the collection of all ordinals less than some given ordinal $\lambda$ that have cofinality $\kappa$, for some regular cardinal $\kappa$, by $S^\lambda_\kappa$. Usually, in the present context, the set $S$ above will be of the form $S^\theta_\omega$, for some regular $\theta>\omega_1$.

If $F$ is only required to be \emph{unbounded,} then the resulting principle is called $\uDSR({<}\kappa,S)$, and if it is required to be \emph{stationary,} then it is denoted $\sDSR({<}\kappa,S)$.
\end{defn}

Of relevance to us is the fact that the principle $\SRP(\omega_1,S^{\omega_2}_\omega)$ is equivalent to the principle $\OSR{\omega_2}$ of Larson \cite{Larson:SeparatingSRP}. Larson showed that this principle follows from Martin's Maximum, but not from \SRP. Adding to this, in \cite[Thm.~4.4]{Fuchs-LambieHanson:SeparatingDSR}, it was shown that \SRP does not imply $\uDSR(1,S^\lambda_\omega)$, for $\lambda>\omega_2$, while \cite{Fuchs:DiagonalReflection} shows that for $\lambda>2^\omega$, \SCFA implies even the stronger principle $\DSR(\omega_1,S^\lambda_\omega)$. Thus, the strong reflection principles fail to capture these consequences of \MM/\SCFA, and our goal is to show that the \emph{diagonal} strong reflection principles do capture them; in fact, even $\wDRP_{\mathsf{IA}}$ is sufficient.

Note that the assumptions of the following theorem are satisfied if $\DSRP(\kappa)$ holds, or if $\kappa>2^\omega$ and $\infSC$-$\DSRP(\kappa)$ holds, by Corollary \ref{cor:DSRPimpliesDRPliftings}.

\begin{thm}
\label{thm:DRP(Lifting)=>DSR}
Let $\kappa>\omega_1$ be regular. Then
$\mathsf{w}\DRP_\IA(\mathcal{S}_\lifting(\kappa))\implies\DSR(\omega_1,S^\kappa_\omega)$.
\end{thm}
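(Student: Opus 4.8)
The plan is to run the classical ``reflection principle implies ordinal stationary reflection'' argument in its diagonal, partition-free form, extracting the reflection point and the witnessing club from an internally approachable elementary submodel. Fix a sequence $\vec S=\seq{S_{\alpha,i}}{\alpha<\kappa,\ i<j_\alpha}$ of stationary subsets of $S^\kappa_\omega$ with $j_\alpha\le\omega_1$ for each $\alpha<\kappa$, and choose $\theta$ regular and large enough that $\mathcal{S}_\lifting(\kappa)\sub H_\theta$ and $\vec S\in H_\theta$. Since the set of $W\in[H_\theta]^{\omega_1}$ with $\omega_1\sub W$, $\vec S,\kappa\in W$ and $W\prec\kla{H_\theta,\in,<^*}$ is club in $[H_\theta]^{\omega_1}$, the hypothesis $\mathsf{w}\DRP_\IA(\mathcal{S}_\lifting(\kappa))$ lets us fix such a $W$ which additionally satisfies: $W\cap H_\kappa$ is internally approachable, witnessed by a $\sub$-continuous chain $\seq{N_\xi}{\xi<\omega_1}$ of countable sets with $\bigcup_{\xi<\omega_1}N_\xi=W\cap H_\kappa$ and $\seq{N_\eta}{\eta<\xi}\in W$ for all $\xi<\omega_1$; and for every $S\in W\cap\mathcal{S}_\lifting(\kappa)$, the set $S\cap[W\cap H_\kappa]^\omega$ is stationary in $W\cap H_\kappa$.

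Set $\gamma=\sup(W\cap\kappa)$ and $\gamma_\xi=\sup(N_\xi\cap\kappa)$ for $\xi<\omega_1$. Since $\omega_1\sub W\cap\kappa$ while $|W\cap\kappa|=\omega_1<\kappa=\cf(\kappa)$, we get $\gamma<\kappa$; and $\gamma\in W$ is impossible (it would give $\gamma\in W\cap\kappa$, hence $\sup(W\cap\kappa)>\gamma$), so $\gamma\notin W$ and $W\cap\kappa\sub\gamma$. The sequence $\seq{\gamma_\xi}{\xi<\omega_1}$ is non-decreasing, continuous (by continuity of the chain), and has supremum $\gamma$. For limit $\xi$, $\gamma_\xi=\sup\big(\big(\bigcup_{\eta<\xi}N_\eta\big)\cap\kappa\big)$ is definable in $H_\theta$ from $\seq{N_\eta}{\eta<\xi}\in W$ and $\kappa\in W$, so $\gamma_\xi\in W\cap\kappa\sub\gamma$; together with $\sup_\xi\gamma_\xi=\gamma$ and monotonicity, $\gamma_\xi<\gamma$ at all limit stages forces $\cf(\gamma)=\omega_1$. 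Put $F=\{\gamma_\xi\st\xi<\omega_1\text{ a limit ordinal}\}$. Then $F\sub W\cap\gamma$, $F$ is cofinal in $\gamma$, and $F$ is closed in $\gamma$ (a limit point $\beta<\gamma$ of $F$ has $\cf(\beta)=\omega$, and writing a cofinal $\omega$-sequence of elements of $F\cap\beta$ as $\seq{\gamma_{\xi_n}}{n<\omega}$ gives $\beta=\gamma_{\xi^*}\in F$ for $\xi^*=\sup_n\xi_n$, by continuity). I claim $\gamma$ and $F$ witness $\DSR(\omega_1,S^\kappa_\omega)$ for $\vec S$.

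So fix $\alpha\in F$ and $i<j_\alpha$; I must show $S_{\alpha,i}\cap\gamma$ is stationary in $\gamma$. Since $\alpha\in F\sub W$ and $\vec S\in W$, the slice $\seq{S_{\alpha,i'}}{i'<j_\alpha}$ lies in $W$, and since $i<j_\alpha\le\omega_1\sub W$ also $S_{\alpha,i}\in W$. Hence $L:=\lifting(S_{\alpha,i},[H_\kappa]^\omega)$ is a member of $\mathcal{S}_\lifting(\kappa)$ (with the trivial club $C=[H_\kappa]^\omega$) definable from $S_{\alpha,i},\kappa\in W$, so $L\in W\cap\mathcal{S}_\lifting(\kappa)$, and therefore $L\cap[W\cap H_\kappa]^\omega$ is stationary in $W\cap H_\kappa$. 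Now $\{N_\xi\st\xi<\omega_1\}$, and more generally $\{N_\xi\st\xi\in E\}$ for every club $E\sub\omega_1$, is a club in $[W\cap H_\kappa]^\omega$; since $N_\xi\in L$ iff $\gamma_\xi\in S_{\alpha,i}$, it follows that $T:=\{\xi<\omega_1\st\gamma_\xi\in S_{\alpha,i}\}$ is stationary in $\omega_1$. Finally, for any club $D\sub\gamma$ the set $E_D=\{\xi<\omega_1\st\gamma_\xi\in D\}$ is a club in $\omega_1$ (using continuity and cofinality of $\seq{\gamma_\xi}{\xi<\omega_1}$ in $\gamma$); choosing $\xi\in T\cap E_D$ gives $\gamma_\xi\in S_{\alpha,i}\cap D$ with $\gamma_\xi<\gamma$. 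As $D$ was arbitrary, $S_{\alpha,i}\cap\gamma$ is stationary in $\gamma$, which completes the argument.

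I expect the main obstacle to be the passage, for a fixed $\alpha\in F$ and $i<j_\alpha$, from stationarity of the lifting $\lifting(S_{\alpha,i},[H_\kappa]^\omega)$ in the generalized-stationarity sense inside $W\cap H_\kappa$ to genuine stationarity of $S_{\alpha,i}\cap\gamma$ in the ordinal $\gamma$; this rests on the internally approachable filtration $\seq{N_\xi}{\xi<\omega_1}$ being a club in $[W\cap H_\kappa]^\omega$ whose trace $\seq{\gamma_\xi}{\xi<\omega_1}$ is a continuous cofinal sequence in $\gamma$, together with the observation that $\gamma_\xi\in W$ at limit stages (which also yields $\cf(\gamma)=\omega_1$ and $F\sub W$). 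The remaining point is the bookkeeping ensuring that every lifting $\lifting(S_{\alpha,i},[H_\kappa]^\omega)$ with $\alpha\in F$, $i<j_\alpha$ belongs to $W$, which works precisely because $F\sub W$ and $\omega_1\sub W$; note that no partition of $\omega_1$ is needed here, which is why one lands in the non-exact principle $\DSR(\omega_1,S^\kappa_\omega)$ rather than an exact variant.
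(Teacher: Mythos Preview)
Your proof is correct and follows essentially the same route as the paper's: pick an elementary $W$ from the hypothesis, use the internally approachable filtration $\seq{N_\xi}{\xi<\omega_1}$ to produce a club $F\sub W\cap\kappa$ cofinal in $\gamma=\sup(W\cap\kappa)$, and translate stationarity of the liftings inside $[W\cap H_\kappa]^\omega$ into ordinal stationarity of each $S_{\alpha,i}\cap\gamma$. Two minor remarks: your closedness argument for $F$ asserts ``$\cf(\beta)=\omega$'' without justification (it is true here, but the cleaner route is to note that $F$ is the image of the club of countable limit ordinals under the continuous non-decreasing map $\xi\mapsto\gamma_\xi$, hence closed); and the restriction to limit $\xi$ is unnecessary, since $\seq{N_\eta}{\eta<\xi+1}\in W$ gives $N_\xi\in W$ and hence $\gamma_\xi\in W$ for every $\xi$, which is exactly how the paper defines its club $\bar C$.
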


\begin{proof}
Let $\vS=\seq{S_{\alpha,i}}{\alpha<\theta,i<\omega_1}$ be a matrix of stationary subsets of $S^\kappa_\omega$. Let $\theta$ be a regular cardinal such that $\mathcal{S}_\lifting(\kappa)\sub H_\theta$. By $\mathsf{w}\DRP_\IA(\mathcal{S}_\lifting(\theta))$, let $W\prec\kla{H_\kappa,\in,\vS}$ satisfy clauses (1)(a) and (b) of Lemma \ref{lem:DSRPimpliesDRP}. 

Let $C=W\cap\kappa$, and let $\gamma=\sup(C)<\kappa$. Since $W$ is internally approachable, it can be written as $W=\bigcup_{i<\omega_1}W_i$, where $\vec{W}$ is a continuous elementary chain such that for all $i<\omega_1$, $\vec{W}\rest i\in W$. Thus, if we let $\theta_i=\sup(W_i\cap\theta)$, then $\bar{C}=\{\theta_i\st i<\omega_1\}$ is a closed unbounded subset of $C$, and since $\vec{\theta}$ is strictly increasing, the cofinality of $\gamma$ is $\omega_1$.

\begin{NumberedClaim}
{For every $\alpha\in C$, and for every $i<\omega_1$, $S_{\alpha,i}\cap\gamma$ is stationary in $\gamma$.}
\end{NumberedClaim}

To see this, fix $\alpha\in C$ and $i<\omega_1$. Note that $S_{\alpha,i}\in W$, and $\kappa$, being definable from $\vS$, is also in $W$. Hence, $\tilde{S}_{\alpha,i}=\lifting(S_{\alpha,i},[H_\kappa]^\omega)\in W\cap\mathcal{S}_{\lifting}(\kappa)$. 
	It follows that $\tilde{S}_{\alpha,i}\cap[W\cap H_\kappa]^\omega$ is stationary in $W\cap H_\kappa$. A standard argument shows that this, in turn, implies that $S_{\alpha,i}\cap\gamma$ is stationary in $\gamma$. In detail, let $D\sub\gamma$ be club. Let $E=D\cap\bar{C}$. Since $\cf(\gamma)>\omega$, $E$ is club. Let $f:C\To C$ be defined by $f(\xi)=\min(E\ohne(\xi+1))$. Since $\tilde{S}_{\alpha,i}\cap[W\cap H_\kappa]^\omega$ is stationary, we can pick a set $x\in\tilde{S}_{\alpha,i}\cap[W\cap H_\kappa]^\omega$ closed under $f$. Let $\delta=\sup(x\cap\kappa)$. Since $x\in\tilde{S}_{\alpha,i}$, it follows that $\delta\in S_{\alpha,i}$. By definition of $f$, $\delta$ is clearly a limit point of $E$, hence also a limit point of $D$. So $\delta\in(S_{\alpha,i}\cap\gamma)\cap D$.

Thus, the club set $\bar{C}$ witnesses this instance of $\DSR(\omega_1,S^\theta_\omega)$.
\end{proof}

%

\subsection{Consequences beyond \DRP: exact reflection}
\label{subsec:Unfiltered}

It was pointed out in the beginning of this section that principles of exact reflection postulate the existence of points at which some stationary sets reflect, but others don't. The fact that the weak diagonal reflection principle is monotonic in its argument (see Remark \ref{rem:Monotonicity}) is an indication that it does not capture these kinds of exact reflection. The proof of the following observation shows that sometimes, it is useful to have $\DSRP(\mathcal{S},\vT)$ for a very small collection $\mathcal{S}$ of stationary sets indeed.

\begin{obs}
\label{obs:Gamma-DSRPimpliesGamma-SRP}
Let $\Gamma=\SSP$ or $\Gamma=\infSC$. Then $\Gamma$-\DSRP implies $\Gamma$-\SRP.
\end{obs}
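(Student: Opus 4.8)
The plan is to show that $\Gamma$-$\DSRP$ implies $\Gamma$-$\SRP(\kappa)$ for every regular $\kappa\geq\omega_2$, by applying $\Gamma$-$\DSRP(\kappa)$ to a singleton collection $\mathcal{S}=\{S\}$. Suppose $S\subseteq[H_\kappa]^\omega$ is $\Gamma$-projective stationary; we must produce a continuous $\in$-chain of length $\omega_1$ through $S$. First I would set $\mathcal{S}=\{S\}$ and choose a maximal partition $\vT=\seq{T_i}{i<\omega_1}$ of $\omega_1$ into stationary sets. The key point is that $\kla{\mathcal{S},\vT}$ is then $\Gamma$-projective stationary: when $\Gamma=\SSP$ this is Remark~\ref{rem:ProjStatOnVecTifTisMaximal} together with the fact that $S$ is projective stationary (Fact~\ref{fact:SisProjStatIffP_SisSSP}); when $\Gamma=\infSC$ this is Remark~\ref{rem:SpreadOutOnvecTSimplified} together with Theorem~\ref{thm:SpreadOutIffinfSCProjectiveStationary}, which tells us $S$ is spread out. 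Hence $\DSRP(\mathcal{S},\vT)$ holds.

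Next I would unpack what $\DSRP(\{S\},\vT)$ gives: for $\theta$ sufficiently large, the set of $X\in[H_\theta]^{\omega_1}$ with $\omega_1\subseteq X$ admitting a diagonal chain $\kla{\vQ,\vS}$ through $\{S\}$ up to $X$ with respect to $\vT$ is stationary; in particular such an $X$ exists. Fix one, with associated $\vQ=\seq{Q_i}{i<\omega_1}$. By clause (2) of Definition~\ref{def:DSRP-preliminiary}, whenever $i\in T_j$ we have $Q_i\in S_j$; but $\mathcal{S}\cap X=\{S\}$, so every $S_j$ equals $S$, and therefore $Q_i\in S$ for every $i\in\bigcup_{j<\omega_1}T_j=\omega_1$ (here I use that $\vT$ is maximal, so the union is all of $\omega_1$). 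Thus $\vQ$ itself is a continuous $\in$-chain of countable subsets of $H_\kappa$, all of whose members lie in $S$, and it has length $\omega_1$ — exactly a continuous $\in$-chain of length $\omega_1$ through $S$, witnessing $\Gamma$-$\SRP(\kappa)$. Since $\kappa\geq\omega_2$ regular was arbitrary, $\Gamma$-$\SRP$ follows.

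The main obstacle, such as it is, is the bookkeeping that a \emph{maximal} partition of $\omega_1$ into stationary sets has $\bigcup_i T_i=\omega_1$, so that clause (2) of the diagonal chain applies at \emph{every} countable ordinal $i$ and forces $Q_i\in S$ without gaps; maximal partitions exist by \cite[Remark~3.17]{Fuchs:CanonicalFragmentsOfSRP}. One should also double-check the trivial point that a collection consisting of a single stationary set is a legitimate (nonempty) input to the relevant adequacy characterizations, and that the requirement $\kappa>\omega_1$ in Definition~\ref{def:Gamma-DSRP} is compatible with the requirement $\kappa\geq\omega_2$ in Definition~\ref{def:Gamma-SRP} — which it is, since both simply mean $\kappa\geq\omega_2$ for regular $\kappa$. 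Everything else is immediate from the definitions and the cited characterization theorems.
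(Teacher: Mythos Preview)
Your proposal is correct and follows essentially the same approach as the paper: take $\mathcal{S}=\{S\}$ and a partition $\vT$ of $\omega_1$ into stationary sets, verify that $\kla{\mathcal{S},\vT}$ is $\Gamma$-projective stationary via the characterization theorems, and observe that any resulting diagonal chain is already a continuous $\in$-chain through $S$. One tiny terminological slip: it is not maximality but the fact that $\vT$ is a partition of \emph{all of} $\omega_1$ that gives $\bigcup_i T_i=\omega_1$; since you chose a maximal partition of $\omega_1$ you have both properties anyway, so nothing is harmed.
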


\begin{proof}
Let $\kappa\ge\omega_2$, and let $S\sub[H_\kappa]^\omega$ be stationary in $H_\kappa$ and $\Gamma$-projective stationary. Let $\vT=\seq{T_i}{i<\omega_1}$ be a partition of $\omega_1$ into stationary sets. Let $\mathcal{S}=\{S\}$. Then $\kla{\mathcal{S},\vT}$ is $\Gamma$-projective stationary (if $\Gamma=\SSP$, then this is by Definition \ref{def:ProjectiveStationaryOnVecT} and Theorem \ref{thm:SSP-AdequacyIsPSonVecT}, and in case $\Gamma=\infSC$, it follows from Definition \ref{def:SpreadOutOnVecT} and Theorem \ref{thm:CharacterizationOfInfSCAdquacy}). Thus, by $\Gamma$-\DSRP, $\DSRP(\mathcal{S},\vT)$ holds, but if $\kla{\vec{Q},\vS}$ witnesses this, then $\vQ$ witnesses the required instance of $\Gamma$-\SRP.
\end{proof}

It will follow from results in Section \ref{sec:DRPlimitations} that an internally approachable form of the original principle \DRP, which strengthens the principles of the form $\wDRPIA(\kappa)$, 
does not imply \SRP. 

Coming up is a typical example of a consequence of a \DSRP type assumption. To make exact reflection meaningful, we have to add in a constraint, but modulo this constraint, we get maximal reflection.

\begin{lem}
Let $\kappa>\omega_1$ be a regular cardinal, and let $E\sub S^\kappa_\omega$ be stationary in $\kappa$. Let
\[\mathcal{S}=\{\lifting(A,[H_\kappa]^\omega)\st A\sub E\ \text{is stationary in $\kappa$}\}.\]
Let $\theta$ be a sufficiently large cardinal so that $\mathcal{S}\sub H_\theta$. Then $\eDSRP(\mathcal{S})$ implies that for stationarily many $W\in[H_\theta]^{\omega_1}$, we have that $\omega_1\sub W$ and $\rho=\sup(W\cap\kappa)$ is an exact simultaneous reflection point of $\{A\in W\st A\sub E\ \text{and $A$ is stationary in $\kappa$}\}$.
\end{lem}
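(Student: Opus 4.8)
The plan is to apply $\eDSRP(\mathcal{S})$ to obtain stationarily many $W \in [H_\theta]^{\omega_1}$ with $\omega_1 \subseteq W$ admitting an \emph{exact} diagonal chain $\vec{Q}$ through $\mathcal{S}$ up to $W$, and then to show that each such $W$ (after intersecting with a club guaranteeing the relevant elementarity) witnesses the conclusion with $\rho = \sup(W \cap \kappa)$. First I would note that the set of $W$ with $\langle H_\theta \cap W, \in, E \rangle \prec \langle H_\theta, \in, E\rangle$ is club, so by $\eDSRP(\mathcal{S})$ we may fix such a $W$ carrying an exact diagonal chain $\vec{Q} = \seq{Q_i}{i<\omega_1}$. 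Since $\vec{Q}$ is a continuous $\in$-chain with $\bigcup_{i<\omega_1} Q_i = H_\kappa \cap W$, the ordinals $\rho_i = \sup(Q_i \cap \kappa)$ form a continuous increasing sequence with supremum $\rho = \sup(W \cap \kappa)$, so $\cf(\rho) = \omega_1$, and $\bar C = \{\rho_i : i < \omega_1\}$ is a club in $\rho$. This club will do double duty: it witnesses both that each relevant $A$ reflects at $\rho$ \emph{and} the exactness.

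The core of the argument is two claims. \emph{Claim 1 (simultaneous reflection):} if $A \in W$ is stationary in $\kappa$ with $A \subseteq E$, then $A \cap \rho$ is stationary in $\rho$. For this, $S_A := \lifting(A,[H_\kappa]^\omega) \in \mathcal{S} \cap W$ (using elementarity of $W$ and that $E,\kappa$ are in or definable over $W$), so by the diagonal chain property $T_A = \{i < \omega_1 : Q_i \in S_A\}$ is stationary in $\omega_1$; and $Q_i \in S_A$ means precisely $\rho_i = \sup(Q_i \cap \kappa) \in A$. So $\{\rho_i : i \in T_A\} \subseteq A \cap \rho$, and since $i \mapsto \rho_i$ is continuous and $T_A$ is stationary, this set is stationary in $\rho$ — this is the same standard "push stationarity down the continuous chain" argument used in the proof of Theorem~\ref{thm:DRP(Lifting)=>DSR}. \emph{Claim 2 (exactness):} $(\bigcup \mathcal{S} \cap W) \cap \rho$ — which unwinds to $E' := \{\sup(x \cap \kappa) : x \in \lifting(A,[H_\kappa]^\omega) \text{ for some stationary } A \subseteq E \text{ with } A \in W\} \cap \rho$, and one checks this equals $(\bigcup\{A : A \in W, A \subseteq E \text{ stationary}\}) \cap \rho$ — contains a club in $\rho$. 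Here I would use clause~(4) of the definition of exact diagonal chain: for every $i < \omega_1$, $Q_i \in \bigcup(W \cap \mathcal{S})$, i.e. there is a stationary $A \subseteq E$ with $A \in W$ and $\rho_i \in A$. Hence $\bar C = \{\rho_i : i < \omega_1\} \subseteq (\bigcup\{A \in W : A \subseteq E \text{ stationary}\}) \cap \rho$, and $\bar C$ is club in $\rho$. Combining Claims 1 and 2 gives exactly that $\rho$ is an exact simultaneous reflection point of $\{A \in W : A \subseteq E \text{ and } A \text{ is stationary in } \kappa\}$, and since $\cf(\rho) = \omega_1$ this matches the definition of $\eTr$.

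Finally I would package this: since the set of $W$ with the required elementarity is club and $\eDSRP(\mathcal{S})$ gives stationarily many $W$ with an exact diagonal chain, stationarily many $W$ have both properties, and each such $W$ satisfies the conclusion. I expect the main obstacle to be purely bookkeeping rather than conceptual: one must verify carefully that for $A \in W$ stationary in $\kappa$ with $A \subseteq E$, the lifting $\lifting(A,[H_\kappa]^\omega)$ is genuinely an element of $\mathcal{S} \cap W$ (this needs $W$ to be closed under the definable-from-$E$ operation $A \mapsto \lifting(A,[H_\kappa]^\omega)$, which follows from elementarity once $E$ is among the predicates of $W$), and to confirm that clause~(4) of exactness delivers precisely the membership needed for Claim 2 — in particular that $\bigcup(W \cap \mathcal{S})$, projected to its sup-of-intersection-with-$\kappa$ content, is the union of the witnessing stationary subsets of $E$ lying in $W$. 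Neither step is deep, but both require stating the unwinding of $\lifting$ cleanly so that "$Q_i \in \lifting(A,\cdot)$" is transparently "$\sup(Q_i \cap \kappa) \in A$".
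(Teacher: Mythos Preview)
Your proposal is correct and follows essentially the same approach as the paper: take $W\prec H_\theta$ (the paper uses $\mathcal{S}$ as a predicate, you use $E$, which comes to the same thing) with an exact diagonal chain $\vec{Q}$, set $\rho_i=\sup(Q_i\cap\kappa)$, and observe that the club $\{\rho_i:i<\omega_1\}$ witnesses exactness via clause~(4) while each $A\in W$ reflects because $\lifting(A,[H_\kappa]^\omega)\in\mathcal{S}\cap W$ makes $\{i:\rho_i\in A\}$ stationary. The paper's own proof is much terser, dismissing the details you spell out as ``straightforward'', but the structure is identical.
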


\noindent\emph{Note:} Again, the assumptions of this lemma hold if $\SSP$-$\DSRP(\kappa)$ holds, or if $\kappa>2^\omega$ and \infSC-$\DSRP(\kappa)$ holds.

\begin{proof}
Let $W\in[H_\theta]^{\omega_1}$ be such that $W\prec\kla{H_\theta,\mathcal{S}}$, $\omega_1\sub W$, and such that there is an exact diagonal chain $\vQ$ through $\mathcal{S}$ up to $W$. By $\eDSRP(\mathcal{S})$, there are stationarily many such. It then follows in a straightforward way that the set $\{\sup(Q_i\cap\kappa)\st i<\omega_1\}$ is a club subset of $\bigcup\{A\in W\st A\sub E\ \text{is stationary}\}$, and since for every $A\sub E$ stationary in $\kappa$ that exists in $W$, the set $S=\lifting(A,[H_\kappa]^\omega)\in\mathcal{S}\cap W$, we have that for stationarily many $i<\omega_1$, $Q_i\in S$, which means that $\sup(Q_i\cap\kappa)\in A$, it follows that $\rho$ is a reflection point of $A$. 	
\end{proof}

The previous lemma is of course most interesting if $S^\kappa_\omega\ohne E$ is also stationary in $\kappa$.
Let us now strengthen the diagonal reflection principles for sequences of stationary sets of ordinals, as given in Definition \ref{defn:DiagonalReflection}, so as to arrive at their exact versions, focusing on the variants most relevant for our purposes.

\begin{defn}
\label{defn:ExactDiagonalReflection}
Let $\kappa$ be a regular cardinal, and let $S\subseteq\kappa$ be stationary. An $(\omega_1,S)$-sequence is a sequence $\seq{S_{\alpha,i}}{\alpha<\kappa, i<\omega_1}$ of subsets of $S$ stationary in $\kappa$. Given such a sequence $\vS$, an ordinal $\rho<\kappa$ of uncountable cofinality is an \emph{exact diagonal reflection point} of $\vS$ if there is a set $R\sub\rho$ such that
\begin{enumerate}[label=(\arabic*)]
	\item
	\label{item:RhasCardOmega1}
	$R$ has cardinality $\omega_1$,
	\item
	\label{item:RcontainsAclub}
	$R$ contains a club in $\rho$,
	\item
	\label{item:RhoIsAnExactSimultaneousReflectionPoint}
	$\rho$ is an exact simultaneous reflection point of $\{S_{\alpha,i}\st\alpha\in R, i<\omega_1\}$.
\end{enumerate}
The \emph{exact diagonal reflection principle} $\eDSR(\omega_1,S)$ says that every $(\omega_1,S)$-sequence has an exact diagonal reflection point.
\end{defn}

Recall that even the simple (non-exact) diagonal reflection principle $\DSR(\omega_1,S^{\omega_2}_{\omega})$, does not follow from \SRP; see the discussion after Definition \ref{defn:DiagonalReflection}. The following theorem shows that \DSRP implies the exact version.

\begin{thm}
\label{thm:DSRP=>eDSR}
Let $\kappa>\omega_1$ be regular, and let $\vS=\seq{S_{\alpha,i}}{\alpha<\kappa, i<\omega_1}$ be an $(\omega_1,S^\kappa_\omega)$-sequence. Let $\mathcal{S}=\{\lifting(S_{\alpha,i},[H_\kappa]^\omega)\st\alpha<\kappa,\ i<\omega_1\}$. Then $\eDSRP(\mathcal{S})$ implies the existence of an exact diagonal reflection point for $\vS$.
\end{thm}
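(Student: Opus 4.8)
The plan is to unpack the definition of $\eDSRP(\mathcal{S})$ and read off an exact diagonal reflection point for $\vS$ from the structure $W$ it provides, together with the exact diagonal chain $\vQ = \seq{Q_i}{i<\omega_1}$ through $\mathcal{S}$ up to $W$. First I would fix a sufficiently large regular $\theta$ with $\mathcal{S}\in H_\theta$, and use $\eDSRP(\mathcal{S})$ to obtain a $W\in[H_\theta]^{\omega_1}$ with $\omega_1\sub W$ and $\kla{H_\theta\cap W,\in}\prec\kla{H_\theta,\in}$ (the stationary set in the definition can be intersected with the club of such elementary $W$, and I would also throw $\vS$ and $\kappa$ into the predicates) admitting an exact diagonal chain $\vQ$ through $\mathcal{S}$ up to $W$. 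Set $R = W\cap\kappa$ and $\rho=\sup R<\kappa$. Since $H_\kappa\cap W = \bigcup_{i<\omega_1}Q_i$ with $\seq{Q_i}{i<\alpha}\in W$ for all $\alpha$, the sets $\rho_i=\sup(Q_i\cap\kappa)$ form a continuous increasing sequence cofinal in $\rho$, so $\cf(\rho)=\omega_1$ and $\{\rho_i\st i<\omega_1\}$ is a club in $\rho$ contained in $R$; this gives clauses \ref{item:RhasCardOmega1} and \ref{item:RcontainsAclub} of Definition \ref{defn:ExactDiagonalReflection}.

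Next I would verify clause \ref{item:RhoIsAnExactSimultaneousReflectionPoint}, namely that $\rho$ is an exact simultaneous reflection point of $\calA=\{S_{\alpha,i}\st\alpha\in R,\ i<\omega_1\}$. For the reflection part: given $\alpha\in R$ and $i<\omega_1$, by elementarity $S=\lifting(S_{\alpha,i},[H_\kappa]^\omega)\in W\cap\mathcal{S}$, so by clause (2) of the definition of a diagonal chain the set $T=\{j<\omega_1\st Q_j\in S\}$ is stationary in $\omega_1$; for $j\in T$ we have $\rho_j=\sup(Q_j\cap\kappa)\in S_{\alpha,i}$, and since we may intersect $T$ with the club of $j$ with $Q_j\cap\kappa\sup$-ing to a limit point of $\bar C=\{\rho_j\st j<\omega_1\}$, a standard argument (exactly the one carried out in the proof of Theorem \ref{thm:DRP(Lifting)=>DSR}) shows $S_{\alpha,i}\cap\rho$ is stationary in $\rho$. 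For the exactness part: I need $(\bigcup\calA)\cap\rho$ to contain a club in $\rho$. Here the \emph{exactness} of the diagonal chain is crucial --- clause (4) says every $Q_j\in\bigcup(W\cap\mathcal{S})$, i.e.\ for each $j$ there are $\alpha\in R$, $i<\omega_1$ with $Q_j\in\lifting(S_{\alpha,i},[H_\kappa]^\omega)$, hence $\rho_j=\sup(Q_j\cap\kappa)\in S_{\alpha,i}\sub\bigcup\calA$. Thus the club $\bar C\sub(\bigcup\calA)\cap\rho$, as required.

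The main obstacle, and the step deserving the most care, is the exactness clause: one must be sure that $\bigcup(W\cap\mathcal{S})$ really is $\bigcup\{\lifting(S_{\alpha,i},[H_\kappa]^\omega)\st\alpha\in R,\ i<\omega_1\}$ --- that is, that $W\cap\mathcal{S}$ consists exactly of the liftings $\lifting(S_{\alpha,i},[H_\kappa]^\omega)$ with $\alpha,i\in W$, and that membership of $Q_j$ in such a lifting translates (via $\rho_j=\sup(Q_j\cap\kappa)$) precisely into $\rho_j\in S_{\alpha,i}$ with $\alpha\in R=W\cap\kappa$. The first point follows because $\mathcal{S}$ is definable in $\kla{H_\theta,\in,\vS,\kappa}$ from $\vS$ and $\kappa$, so $W\cap\mathcal{S}=\{\lifting(S_{\alpha,i},[H_\kappa]^\omega)\st\alpha,i\in W\}$ by elementarity, and $i\in W$ is automatic since $\omega_1\sub W$; the second is immediate from the definition of $\lifting$. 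A minor technical point to get right is that the definition of an exact diagonal chain in Definition \ref{def:DSRP-preliminiary} already implies $W\cap H_\kappa=\bigcup_j Q_j$ and the initial-segment condition, which I use freely; and that $R$ as defined has cardinality exactly $\omega_1$ because $\vQ$ has length $\omega_1$ and each $Q_j$ is countable, while $\rho\notin R$ so $R\sub\rho$. Assembling these observations, $R$ witnesses that $\rho$ is an exact diagonal reflection point of $\vS$.

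\begin{proof}
Fix a regular $\theta$ large enough that $\mathcal{S}\in H_\theta$ and $\vS\in H_\theta$; note $\mathcal{S}$ is definable in $\kla{H_\theta,\in,\vS,\kappa}$ from $\vS$ and $\kappa$. By $\eDSRP(\mathcal{S})$, the set of $W\in[H_\theta]^{\omega_1}$ with $\omega_1\sub W$ admitting an exact diagonal chain through $\mathcal{S}$ up to $W$ is stationary; intersecting with the club of $W$ with $\kla{H_\theta\cap W,\in,\vS,\kappa,\mathcal{S}}\prec\kla{H_\theta,\in,\vS,\kappa,\mathcal{S}}$, we may fix such a $W$ that is in addition elementary in this expanded structure. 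Let $\vQ=\seq{Q_i}{i<\omega_1}$ be an exact diagonal chain through $\mathcal{S}$ up to $W$, let $R=W\cap\kappa$, and let $\rho=\sup(R)$. Since $R$ has cardinality $\omega_1$ and $\rho\notin R$, $R\sub\rho$ has cardinality $\omega_1$, giving clause \ref{item:RhasCardOmega1}.

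Since $H_\kappa\cap W=\bigcup_{i<\omega_1}Q_i$ with each $Q_i$ countable and the sequence $\in$-increasing and continuous, the ordinals $\rho_i=\sup(Q_i\cap\kappa)$ are non-decreasing, continuous at limits, and cofinal in $\rho$. If this sequence were eventually constant it would be so from some $i_0$ on, forcing $\rho=\rho_{i_0}\in R$, a contradiction; hence $\bar C=\{\rho_i\st i<\omega_1\}$ is a club in $\rho$, $\cf(\rho)=\omega_1$, and $\bar C\sub R\sub\rho$. This gives clause \ref{item:RcontainsAclub}.

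It remains to check clause \ref{item:RhoIsAnExactSimultaneousReflectionPoint}: that $\rho$ is an exact simultaneous reflection point of $\calA=\{S_{\alpha,i}\st\alpha\in R,\ i<\omega_1\}$.

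\emph{Reflection.} Fix $\alpha\in R$ and $i<\omega_1$. Since $\alpha\in R=W\cap\kappa$ and $\omega_1\sub W$, we have $S_{\alpha,i}\in W$ by elementarity, hence $S:=\lifting(S_{\alpha,i},[H_\kappa]^\omega)\in W\cap\mathcal{S}$. By clause (2) of Definition \ref{def:DSRP-preliminiary} the set $T=\{j<\omega_1\st Q_j\in S\}$ is stationary in $\omega_1$. To see $S_{\alpha,i}\cap\rho$ is stationary in $\rho$, let $D\sub\rho$ be club; then $E=\{j<\omega_1\st\rho_j\in D\}$ is club in $\omega_1$, since $\bar C$ is club in $\rho$ and $j\mapsto\rho_j$ is continuous. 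Pick $j\in T\cap E$. Then $Q_j\in\lifting(S_{\alpha,i},[H_\kappa]^\omega)$, so $\rho_j=\sup(Q_j\cap\kappa)\in S_{\alpha,i}$, and $\rho_j\in D$ by choice of $E$. Thus $\rho_j\in(S_{\alpha,i}\cap\rho)\cap D$, and $S_{\alpha,i}\cap\rho$ is stationary in $\rho$.

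\emph{Exactness.} We claim $\bar C\sub(\bigcup\calA)\cap\rho$, which suffices since $\bar C$ is club in $\rho$. Let $j<\omega_1$. By clause (4) (exactness of $\vQ$), $Q_j\in\bigcup(W\cap\mathcal{S})$. Since $\mathcal{S}$ is definable in the expanded structure from $\vS$ and $\kappa$, and $W$ is elementary in that structure with $\omega_1\sub W$, we have $W\cap\mathcal{S}=\{\lifting(S_{\alpha,i},[H_\kappa]^\omega)\st\alpha\in R,\ i<\omega_1\}$. Hence there are $\alpha\in R$ and $i<\omega_1$ with $Q_j\in\lifting(S_{\alpha,i},[H_\kappa]^\omega)$, so $\rho_j=\sup(Q_j\cap\kappa)\in S_{\alpha,i}\sub\bigcup\calA$. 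Therefore $\bar C\sub(\bigcup\calA)\cap\rho$, completing the verification that $\rho$ is an exact simultaneous reflection point of $\calA$.

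Thus $R$ witnesses that $\rho$ is an exact diagonal reflection point of $\vS$, as required.
\end{proof}
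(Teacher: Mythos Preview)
Your proof is correct and follows essentially the same approach as the paper's: pick $W$ elementary with an exact diagonal chain $\vQ$, set $R=W\cap\kappa$, $\rho=\sup R$, and use the suprema $\rho_j=\sup(Q_j\cap\kappa)$ to produce the club inside $R$, verifying reflection from clause~(2) of the chain and exactness from clause~(4) together with elementarity. Your write-up is, if anything, slightly more careful than the paper's in spelling out why $\rho\notin R$ and why $W\cap\mathcal{S}$ consists precisely of the liftings with indices in $W$.
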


\noindent\emph{Note:} By Lemma \ref{thm:Gamma-DSRPimpliesExactDSRP}, the assumption of this theorem holds if $\kappa>2^\omega$ and $\infSC$-$\DSRP(\kappa)$ holds, or if $\DSRP(\kappa)$ holds. That is, we have that ``$\infSC$-$\DSRP(\kappa) + \kappa>2^\omega$ is regular'' implies $\eDSR(\omega_1,S^\kappa_\omega)$, as does ``$\DSRP(\kappa) + \kappa>\omega_1$ is regular''.

\begin{proof}
Let $\theta$ be a cardinal such that $\mathcal{S}\sub H_\theta$. Let $\omega_1\sub W\prec\kla{H_\theta,\in,\vec{S}}$ have size $\omega_1$ and let $\vQ$ be an exact diagonal chain through $\mathcal{S}$ up to $W$. Let $R=W\cap\kappa$ and $\rho=\sup(R)$. We claim that $\rho$ is an exact diagonal reflection point of $\vS$, as witnessed by $R$. Let, for $j<\omega_1$, $\rho_j=\sup(Q_j\cap\kappa)$. Then $\rho=\sup_{j<\omega_1}\rho_j$ and $C=\{\rho_j\st j<\omega_1\}$ is club in $\rho$, $\rho$ has cofinality $\omega_1$, and $C\sub R$. This verifies conditions \ref{item:RhasCardOmega1} and \ref{item:RcontainsAclub}.


To see that $\rho$ is a reflection point of $S_{\alpha,i}$, for every $\alpha\in R$ and every $i<\omega_1$, fix such $\alpha$ and $i$. Let $T_{\alpha,i}=\{j<\omega_1\st Q_j\in\lifting(S_{\alpha,i},[H_\kappa]^\omega)\}$.
Since $\alpha, i\in W$, it follows that $\lifting(S_{\alpha,i},[H_\kappa]^\omega)\in W\cap\mathcal{S}$, and so, $T_{\alpha,i}$ is stationary in $\omega_1$.
But whenever $j\in T_{\alpha,i}$, then $\rho_j\in S_{\alpha,i}$. So since $T_{\alpha,i}$ is stationary in $\omega_1$ and the map $j\mapsto\rho_j$ is continuous and strictly increasing, it follows that $\{\rho_j\st j\in T_{\alpha,i}\}$ is stationary in $\rho$. Since $\{\rho_j\st j\in T_{\alpha,i}\}\sub S_{\alpha,i}$, it follows that $\rho$ is a reflection point of $S_{\alpha,i}$. Thus, $\rho$ is a simultaneous reflection point of $\{S_{\alpha,i}\st\alpha\in R, i<\omega_1\}$.

Finally, since $\vQ$ is exact, we have that for every $j<\omega_1$, $Q_j\in S$, for some $S\in\mathcal{S}\cap W$, and hence, $\rho_j\in S_{\alpha,i}$, for some $\alpha\in W\cap\kappa$ and some $i<\omega_1$. This is because if $S\in\mathcal{S}\cap W$, then since $W\prec\kla{H_\theta,\in,\vS}$, there is a least pair $\kla{\alpha,i}$ such that $S=S_{\alpha,i}$, which must be in $W$. Thus, $C\sub\bigcup_{\alpha\in R, i<\omega_1}S_{\alpha,i}$, verifying the ``exactness'' part of condition \ref{item:RhoIsAnExactSimultaneousReflectionPoint}.
\end{proof}

As a last example, let us state an exact diagonal mutual reflection principle with a constraint. The formulation is a little tedious, but the principle is quite natural.

\begin{defn}
\label{def:StationarySetsAndTheirRestrictions}
Let $\kappa$ be an ordinal of uncountable cofinality. We write $\Stationary_\kappa$ for the collection of all stationary subsets of $\kappa$. Given a set $E\sub\kappa$, we write $\Stationary_\kappa\rest E$ for the collection of all subsets of $E$ that are stationary in $\kappa$.	
\end{defn}

\begin{thm}
Let $K\neq\leer$ be a set of regular cardinals greater than $\omega_1$, with supremum $\tkappa$. Let $\seq{E_\kappa}{\kappa\in K}$ be a sequence of sets such that for each $\kappa\in K$, $E_\kappa\sub S^\kappa_\omega$ is stationary in $\kappa$. Now, for every $\vA\in\prod_{\kappa\in K}\mathsf{S}_\kappa\rest E_\kappa$, let
\[S_\vA=\{X\in[H_\tkappa]^\omega\st\forall\kappa\in X\cap K\quad\sup(X\cap\kappa)\in A_\kappa\}\]
and let
\[\mathcal{S}=\{S_{\vA}\st\vA\in\prod_{\kappa\in K}\Stationary_\kappa\rest E_\kappa\}.\]
Assume $\eDSRP(\mathcal{S})$ holds and let $\theta$ be a cardinal sufficiently large so that $\mathcal{S}\sub H_\theta$. Then there are stationarily many $W\in[H_\theta]^{\omega_1}$ such that:
\begin{enumerate}[label=\em(\arabic*)\em]
\item $\omega_1\sub W$.
\item For every $\kappa\in K\cap W$, $\rho_\kappa=\sup(W\cap\kappa)$ is an exact simultaneous reflection point for $(\Stationary_\kappa\rest E_\kappa)\cap W$.
\item There is a matrix $\seq{\rho_{\kappa,i}}{\kappa\in K\cap W, i<\omega_1}$ such that:
    \begin{enumerate}[label=\em(\alph*)\em]
    	\item for every $\kappa\in K\cap W$, there is a $\delta_\kappa<\omega_1$ such that the function $\delta_\kappa<i\mapsto\rho_{\kappa,i}$ is strictly increasing, continuous and cofinal in $\rho_\kappa$.
    	\item For every $\kappa\in K\cap W$ and every sufficiently large $i<\omega_1$,
		\[\rho_{\kappa,i}\in W\cap E_\kappa.\]
    	\item For every $\vA\in W\cap\prod_{\kappa\in K}\Stationary_\kappa\rest E_\kappa$, there is a stationary subset $T_{\vA}\sub\omega_1$ such that for every $\kappa\in K\cap W$ and every sufficiently large $i\in T_{\vA}$, $\rho_{\kappa,i}\in A_\kappa$.
    \end{enumerate}
\end{enumerate}
\end{thm}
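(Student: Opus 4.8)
The plan is to run the standard witness analysis for $\eDSRP$, as in the proof of Theorem~\ref{thm:DSRP=>eDSR}, but extracting the entire matrix $\seq{\rho_{\kappa,i}}{\kappa\in K\cap W,\ i<\omega_1}$ from a single exact diagonal chain at once. Fix a regular $\theta$ with $\mathcal{S}\sub H_\theta$, a well-order $<^*$ of $H_\theta$, and set $\mathcal{A}=\kla{H_\theta,\in,<^*,K,\seq{E_\kappa}{\kappa\in K},\mathcal{S}}$. Intersecting the stationary set given by $\eDSRP(\mathcal{S})$ with the club of $W$ satisfying $\mathcal{A}\rest W\prec\mathcal{A}$, one obtains stationarily many $W\in[H_\theta]^{\omega_1}$ with $\omega_1\sub W$, $\mathcal{A}\rest W\prec\mathcal{A}$, and an exact diagonal chain $\vec{Q}=\seq{Q_i}{i<\omega_1}$ through $\mathcal{S}$ up to $W$; conclusion~(1) is then immediate, and I will show every such $W$ satisfies (1)--(3). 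As in the proofs of Theorems~\ref{thm:DSRP=>eDSR} and~\ref{thm:DRP(Lifting)=>DSR}, I may assume furthermore that each $Q_i$ is an elementary submodel of $\kla{H_{\tkappa},\in,<^*,K,\seq{E_\kappa}{\kappa\in K}}$; in particular the chain is $\sub$-increasing, and each $Q_{i+1}$ is closed under the map $x\mapsto\sup(x\cap\lambda)$ for $\lambda\in Q_{i+1}\cap K$.

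Fix such a $W$. For $\kappa\in K\cap W$ set $\rho_\kappa=\sup(W\cap\kappa)$ and $\rho_{\kappa,i}=\sup(Q_i\cap\kappa)$ for $i<\omega_1$. Since $\seq{Q_j}{j<i+1}\in H_{\tkappa}\cap W$, each $Q_i$, and hence each $\rho_{\kappa,i}$, lies in $W$; and $\rho_\kappa<\kappa$ because $\kappa$ is regular and $\card{W}<\kappa$. From $\bigcup_{i<\omega_1}Q_i=H_{\tkappa}\cap W\supseteq W\cap\kappa$ we get $\sup_{i<\omega_1}\rho_{\kappa,i}=\rho_\kappa$, and $Q_\lambda=\bigcup_{j<\lambda}Q_j$ makes $i\mapsto\rho_{\kappa,i}$ continuous at limits. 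Let $\delta_\kappa<\omega_1$ be least with $\kappa\in Q_{\delta_\kappa}$. For $i\ge\delta_\kappa$ one has $\kappa\in Q_i$, and since $Q_i\in Q_{i+1}$ with $Q_{i+1}$ elementary, $\sup(Q_i\cap\kappa)\in Q_{i+1}\cap\kappa$ while $Q_{i+1}\cap\kappa$ has no largest element; hence $\rho_{\kappa,i}<\rho_{\kappa,i+1}$, so $i\mapsto\rho_{\kappa,i}$ is strictly increasing on $[\delta_\kappa,\omega_1)$. This is conclusion~(3)(a), and in particular $\cf(\rho_\kappa)=\omega_1$, so $\rho_\kappa$ has the uncountable cofinality required of a reflection point.

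The remaining conclusions are the payoff of exactness. For~(3)(b): given $\kappa\in K\cap W$ and $i\ge\delta_\kappa$, exactness of $\vec{Q}$ provides $\vA^{(i)}\in W\cap\prod_{\lambda\in K}\Stationary_\lambda\rest E_\lambda$ with $Q_i\in S_{\vA^{(i)}}$; since $\kappa\in Q_i\cap K$, the definition of $S_{\vA^{(i)}}$ gives $\rho_{\kappa,i}\in A^{(i)}_\kappa\sub E_\kappa$, and $A^{(i)}_\kappa\in W$ (definable from $\vA^{(i)}$ and $\kappa$), so $\rho_{\kappa,i}\in W\cap E_\kappa$ and $A^{(i)}_\kappa\in(\Stationary_\kappa\rest E_\kappa)\cap W$. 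In particular $\{\rho_{\kappa,i}\st\delta_\kappa\le i<\omega_1\}$ is a club in $\rho_\kappa$ contained in $\bigcup((\Stationary_\kappa\rest E_\kappa)\cap W)$, which is the exactness half of~(2). For the reflection half, take $A\in(\Stationary_\kappa\rest E_\kappa)\cap W$ and extend it, inside $W$, to some $\vA\in W\cap\prod_{\lambda\in K}\Stationary_\lambda\rest E_\lambda$ with $A_\kappa=A$ (say $A_\lambda=E_\lambda$ for $\lambda\neq\kappa$); then $S_{\vA}\in\mathcal{S}\cap W$, so $T=\{i<\omega_1\st Q_i\in S_{\vA}\}$ is stationary, $\rho_{\kappa,i}\in A$ for $i\in T\cap[\delta_\kappa,\omega_1)$, and pushing the stationary set $T\cap[\delta_\kappa,\omega_1)$ through the continuous, strictly increasing, cofinal map $i\mapsto\rho_{\kappa,i}$ shows $A\cap\rho_\kappa$ is stationary in $\rho_\kappa$. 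Finally,~(3)(c) is the same observation with all coordinates retained: for $\vA\in W\cap\prod_{\kappa\in K}\Stationary_\kappa\rest E_\kappa$, the set $T_{\vA}=\{i<\omega_1\st Q_i\in S_{\vA}\}$ is stationary since $S_{\vA}\in\mathcal{S}\cap W$, and for every $\kappa\in K\cap W$ and every $i\in T_{\vA}$ with $i\ge\delta_\kappa$ we have $\kappa\in Q_i\cap K$, whence $\rho_{\kappa,i}\in A_\kappa$ straight from the definition of $S_{\vA}$. Thus every such $W$ is as required, so the set of such $W$ is stationary.

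The one genuinely delicate step is the normalization in the first paragraph --- that the members of the exact diagonal chain can be taken to be elementary submodels of a suitable structure. This is exactly what makes $i\mapsto\sup(Q_i\cap\kappa)$ strictly, rather than merely weakly, increasing past $\delta_\kappa$ and forces $\cf(\rho_\kappa)=\omega_1$; without it conclusion~(3)(a) could fail outright. One way to secure it is to replace $\mathcal{S}$ by the club-intersected collection $\{S\cap C\st S\in\mathcal{S}\}$, with $C$ the club of the relevant countable elementary submodels, for which $\eDSRP$ holds under the hypotheses that yield the present one; in any case, this is the same point used tacitly in the proofs of Theorems~\ref{thm:DSRP=>eDSR} and~\ref{thm:DRP(Lifting)=>DSR}. (If $\tkappa\in K$, the $S_{\vA}$ should be defined inside a slightly larger $H_\lambda$, since no element of $[H_{\tkappa}]^\omega$ --- hence no $Q_i$ --- can contain $\tkappa$; this changes nothing substantive.)
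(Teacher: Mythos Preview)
Your proof is correct and follows exactly the approach the paper takes: define $\rho_{\kappa,i}=\sup(Q_i\cap\kappa)$ from an exact diagonal chain through $\mathcal{S}$ up to $W$ and verify the clauses, which the paper simply declares ``routine to check.'' Your identification of the one genuinely delicate point---that strict increase of $i\mapsto\rho_{\kappa,i}$ requires the $Q_i$ to be sufficiently closed (e.g., elementary)---and your proposed fix via intersecting with a club are more careful than the paper itself, which glosses over this issue here just as it does in the proof of Theorem~\ref{thm:DSRP=>eDSR}.
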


\begin{remark}
It was shown in \cite[Cor.~3.26]{Fuchs:CanonicalFragmentsOfSRP} that $S_{\vA}$ is projective stationary, and so, the assumptions of the theorem follow from $\DSRP$. By \cite[Cor.~3.32]{Fuchs:CanonicalFragmentsOfSRP}, $S_\vA$ is even spread out if $\infSC$-$\SRP + \CH$ holds, so the assumptions also follow from $\infSC$-$\DSRP + \CH$. 	
\end{remark}

\begin{proof}
By assumption, there are stationarily in $H_\theta$ many $W\in[H_\theta]^{\omega_1}$ with $\omega_1\sub W$ such that there is an exact diagonal chain $\seq{Q_i}{i<\omega_1}$ through $\mathcal{S}$ up to $W$. Define, for $\kappa\in K\cap W$ and $i<\omega_1$, $\rho_{\kappa,i}=\sup(Q_i\cap\kappa)$. It is routine to check that all the conditions are satisfied.
\end{proof}

\section{Limitations}
\label{sec:DRPlimitations}

In this section, we will present some negative results, separating some of the principles under investigation. The first of these employs methods of Miyamoto.

\begin{thm}
\label{thm:SeparatingDSRPfromMM}
Assuming the consistency of a supercompact cardinal, $\DSRP$ does not imply $\MM$; it is consistent with the existence of a Souslin tree.
\end{thm}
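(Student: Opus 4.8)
The plan is to force over a model of a supercompact cardinal to produce a model of $\DSRP$ that also contains a Souslin tree, thereby also witnessing $\neg\MM$ (since $\MM$ implies Souslin's Hypothesis). The standard way to obtain a model of a forcing axiom from a supercompact $\kappa$ is a countable support iteration of length $\kappa$ (for the semiproper/SSP case, a revised countable support iteration) that handles, via a Laver-type diamond bookkeeping function, all relevant posets. Here, since $\DSRP$ is the $\SSP$-fragment, by Lemma \ref{lem:FA(Gamma)impliesGamma-DSRP} it suffices to arrange $\FA(\SSP)$ restricted to the forcings of the form $\P^\DSRP_{\mathcal{S},\vT}$ that arise — but in fact the cleanest route is to force full $\MM$-style iteration but \emph{restricted to a subclass} of $\SSP$ forcings that (a) still contains all the posets $\P^\DSRP_{\mathcal{S},\vT}$ needed to witness $\DSRP$, and (b) preserves a carefully prepared Souslin tree. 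This is exactly the Miyamoto-style technique alluded to in the sentence preceding the theorem: iterating only those stationary set preserving forcings that preserve a fixed Souslin tree $T$ (equivalently, forcing with the class "$T$-preserving semiproper forcing"), which is itself a well-behaved iterable class.

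First I would fix, in the ground model $V$ with supercompact $\kappa$, an appropriate preparatory forcing (a Laver preparation) so that $\kappa$ becomes indestructibly supercompact, and then add a Souslin tree $T$ by the usual $\diamondsuit$-driven or Tennenbaum/Jech forcing (which is $\sigma$-closed, hence $T$ still exists and is Souslin after small forcing). Then I would perform a revised countable support iteration $\langle \P_\alpha, \dot{\Q}_\alpha \st \alpha<\kappa\rangle$ of length $\kappa$, where at each stage $\dot\Q_\alpha$ is a name (provided by the bookkeeping/Laver function) for a poset that is stationary set preserving \emph{and} preserves the Souslin tree $T$, such that the iteration still preserves $T$ and is semiproper. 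The key structural inputs are: Miyamoto's theorem that the class of semiproper (or SSP) forcings preserving a fixed Souslin tree is iterable with revised countable support; and the observation that each $\P^\DSRP_{\mathcal{S},\vT}$ with $\langle\mathcal{S},\vT\rangle$ $\SSP$-projective stationary can be forced to preserve $T$ — or rather, that at the relevant stage one only needs to catch those instances which do preserve $T$, and these suffice to witness $\DSRP$ in the extension.

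The crucial point — and the main obstacle — is verifying clause (b): that we can witness \emph{every} instance of $\DSRP$ using only $T$-preserving stationary set preserving forcings. By the characterization in Theorem \ref{thm:SSP-AdequacyIsPSonVecT}, $\DSRP$ requires, for each pair $\langle\mathcal{S},\vT\rangle$ with $\mathcal{S}$ projective stationary on $\vT$, that $\DSRP(\mathcal{S},\vT)$ holds; and by Lemma \ref{lem:FA(P-DSRP)impliesDSRPatSAndT} this follows once $\FA(\{\P^\DSRP_{\mathcal{S},\vT}\})$ is achieved. So the obstruction is: for a given projective-stationary-on-$\vT$ pair in the final model, is $\P^\DSRP_{\mathcal{S},\vT}$ ($T$-preserving and hence) caught by the iteration? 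The standard fix is a reflection/absorption argument: since $\kappa$ is supercompact, by elementarity one shows that the relevant pair already appears at some stage $\alpha<\kappa$ of cofinality appropriate for the bookkeeping, and at that stage $\P^\DSRP_{\mathcal{S},\vT}$ \emph{does} preserve the Souslin tree $T_\alpha$ as seen there — this requires showing that the diagonal-chain forcing $\P^\DSRP_{\mathcal{S},\vT}$ does not add a branch through $T$, which I expect follows from the $\sigma$-closed-like behavior isolated in Lemma \ref{lem:CountableDistributivityOfP-DSRP} (countable distributivity, when every $S\in\mathcal{S}$ is projective stationary on $T_0$, implies no new branches through $T$ of height $\omega_1$ are added, since $T$ has no uncountable branch in $V$ and countably distributive forcing adds no new $\omega$-sequences). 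For the general (non-maximal-$\vT$) case one reduces to the maximal case via the observations in Section \ref{sec:SSP-DSRP}. I would then conclude that in $V[G_\kappa]$, $\DSRP$ holds, $T$ is still Souslin (so $\SH$ fails, hence $\MM$ fails, since $\MM\Rightarrow\SH$), giving the consistency of $\DSRP+\neg\MM$ together with the existence of a Souslin tree, relative to a supercompact.

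\medskip

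\noindent\textbf{Remark on the obstacle.} The genuinely delicate step is the interaction between $T$-preservation and the semiproperness bookkeeping: one must ensure the iteration theorem applies to the \emph{restricted} class (SSP $\cap$ $T$-preserving) and that this class is closed under the two-step iterations and revised countable support limits used — this is precisely the content of Miyamoto's iteration machinery, which I would cite rather than reprove, and then check that each $\P^\DSRP_{\mathcal{S},\vT}$ (equivalently, each canonical poset needed to witness an instance of $\DSRP$) lies in this class, using countable distributivity to rule out new cofinal branches through $T$.
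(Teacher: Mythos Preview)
Your overall strategy is the same as the paper's: work in Miyamoto's model of $\MM(\text{Souslin})$ + ``there is a Souslin tree'' and argue that $\DSRP$ holds there because every $\P^{\DSRP}_{\mathcal{S},\vT}$ (for $\SSP$-projective stationary $\kla{\mathcal{S},\vT}$) belongs to the Souslin-tree-preserving class. The paper does exactly this, citing Miyamoto's consistency result as a black box rather than re-running the iteration, and then verifying directly that each $\P^{\DSRP}_{\mathcal{S},\vT}$ preserves every Souslin tree.

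The genuine gap is in your verification of Souslin-tree preservation. You argue that countable distributivity of $\P^{\DSRP}_{\mathcal{S},\vT}$ (Lemma~\ref{lem:CountableDistributivityOfP-DSRP}) suffices, because ``countably distributive forcing adds no new $\omega$-sequences'' and hence ``no new branches through $T$ of height $\omega_1$.'' Both inferences fail. A cofinal branch through an $\omega_1$-tree is an $\omega_1$-sequence, not an $\omega$-sequence, so adding no new $\omega$-sequences says nothing about it. More importantly, Souslin preservation is about \emph{antichains}, not branches, and countably distributive---indeed even $\sigma$-closed---forcing can add an uncountable antichain to a Souslin tree; there is no general theorem of the form ``countably distributive forcing preserves Souslin trees.''

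The paper's argument is genuinely more delicate than an appeal to distributivity. Given a Souslin tree $U$, a condition $p$, and a name $\dot{A}$ for a maximal antichain, one takes a countable $X\prec H_\theta$ with $X\cap\omega_1=\delta\in T_0$ and $X\cap H_\kappa\in S^p_0$ (using projective stationarity on $T_0$). For each node $t_n$ at level $\delta$ of $U$, the branch below $t_n$ is $U$-generic over $X$ (since $U$ is ccc), and this is used to show that certain sets $D_n$---demanding that some $t<_U t_n$ is forced into $\dot{A}$---are dense in $\P\cap X$, even though $D_n\notin X$. One then builds a decreasing sequence meeting all $D_n$ together with all dense sets in $X$, and uses the specific structure of $\P^{\DSRP}_{\mathcal{S},\vT}$ to produce a master condition $q=\kla{\vQ{}^\frown(X\cap H_\kappa),\vS}$ below the whole sequence, which forces $\dot{A}\subseteq U\rest\delta$. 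The existence of this master condition is exactly what a bare distributivity hypothesis does not give you.
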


\begin{proof}
Miyamoto \cite[Def.~5.4]{Miyamoto:IteratingSemiproperPreorders} introduced the forcing axiom $\MM(Souslin)$, which is the forcing axiom for the class of all stationary set preserving forcing notions that also preserve every $\omega_1$-Souslin tree, and he showed \cite[Cor.~5.8]{Miyamoto:IteratingSemiproperPreorders} that assuming the consistency of a supercompact cardinal, $\MM(Souslin)+$``there is a Souslin tree'' is consistent. He also showed that $\MM(Souslin)$ implies \SRP. All we have to do is observe that $\MM(Souslin)$ also implies $\DSRP$. For this, it clearly suffices to show, given a pair $\kla{\mathcal{S},\vT}$ that is \SSP-projective stationary, that $\P=\P^\DSRP_{\mathcal{S},\vT}$ preserves Souslin trees. So let $U$ be a Souslin tree, let $p\in\P$, and suppose that $\dot{A}$ is a $\P$-name such that $p$ forces that $\dot{A}$ is a maximal antichain in $T$. We have to find an extension $q$ of $p$ that forces $\dot{A}$ to be countable. We may assume that $S=S^p_0$ is defined. So $S$ is projective stationary on $T_0$. Let $\theta$ be a sufficiently large regular cardinal, and let $X\prec\kla{H_\theta,\in,<^*}$ with $\P,\mathcal{S},p,\vT,U\in X$, and such that $\delta=X\cap\omega_1\in T_0$ and $X\cap H_\kappa\in S$.

Let $\{t_n\st n<\omega\}$ enumerate the $\delta$-th level of $U$, and define
\begin{ea*}
D_n&=&\{r\in \P\cap M\st \text{either $r$ is incompatible with $p$, or there is a $t\in T\cap M$}\\
&& \qquad\qquad\qquad\text{such that $t<_U t_n$ and $r\forces\check{t}\in\dot{A}$}\}.
\end{ea*}%
The point is that $D_n$ is dense in $\P\cap M$, as is shown by the argument of \cite[claim on p.~1464]{Miyamoto:IteratingSemiproperPreorders}: let $a\in\P\cap M$ be given. If $a$ is incompatible with $p$, then $a\in D_n$ and we are done. Otherwise, by strengthening $a$, we may assume that $a\le_\P p$. Let $D=\{t\in U\st\exists r\le_\P a\ r\forces\check{t}\in\dot{A}\}$. Since $a$ forces $\dot{A}$ to be a \emph{maximal} antichain in $U$, it is easy to see that $D$ is predense in $U$, that is, every element of $U$ is comparable with some member of $D$. Since $U$ is a Souslin tree, the set $b_n=\{t\in U\st t<_U t_n\}\cap M$ is $U$-generic over $M$, and hence, it intersects $D$ in $M$, as $D\in M$. So let $t\in b_n\cap D\cap M$. Let $r\le a$ witness that $t\in D$. Then $r\in D_n$, as witnessed by $t$.

Note that $D_n$ is not in $M$. But we may construct an $M$-generic $G$ by forming a decreasing chain $\seq{p_n}{n<\omega}$ of conditions in $\P$ such that, letting $\seq{E_n}{n<\omega}$ enumerate all dense open subsets of $\P$ in $M$, $p_n\in D_n\cap E_n$ (note that $D_n$ is open as well), and such that $p_0\le p$. Let $G$ be the filter generated by $\vp$. Then, letting $\vQ=\bigcup_{n<\omega}\vQ^{p_n}$ and $\vS=\bigcup_{n<\omega}\vS^{p_n}$, we have that $\delta$ is the length of $\vQ$, which is the same as the length of $\vS$, and $M\cap H_\kappa=\bigcup_{i<\delta}Q_i\in S_0$. Since $\delta\in T_0$, we can define a condition $q$ by setting
\[q=\kla{\vQ\verl M,\vS}.\]
Clearly, $q$ forces that $\dot{A}$ is contained in $U\rest\delta$, the restriction of $U$ to levels below $\delta$.
\end{proof}

It is now natural to ask for a similar separation between $\Gamma$-\DSRP and $\FA(\Gamma)$, where $\Gamma$ is the class of all subcomplete or all $\infty$-subcomplete forcing notions.
It was observed in \cite{Fuchs:CanonicalFragmentsOfSRP} that, assuming the consistency of \MM, $\infSC$-\SRP does not imply \SCFA, since under the assumption of \MM, a model of \ZFC can be constructed in which $\SRP + \neg\uDSR(1,S^{\omega_3}_\omega)$ holds. So this model satisfies \infSC-\SRP, but not \SCFA, or else it would have to satisfy $\DSR(\omega_1,S^{\omega_3}_\omega)$. For the same reason, though, $\infSC$-\DSRP fails in this model as well, so this method does not separate $\infSC$-\DSRP from $\SCFA$. Theorem \ref{thm:SeparatingDSRPfromMM} does not achieve this separation either, because \SCFA is consistent with the existence of Souslin trees. Further, it was argued in \cite{Fuchs:CanonicalFragmentsOfSRP} that the assumption of \CH should be added to $\infSC$-\SRP, since for regular $\kappa\in(\omega_1,2^\omega)$, $\infSC$-$\SRP(\kappa)$ holds trivially. Since the models achieving the separations up to now satisfied \SRP, \CH fails in them, and so, they don't achieve a separation of this kind.

The following theorem does achieve a certain separation at the level $\omega_2$ in the presence of \CH. This result was alluded to at the end of the article \cite{Fuchs:CanonicalFragmentsOfSRP}, but not made precise.
For this result, it is important that we work with subcompleteness, not $\infty$-subcompleteness. Since we will be using results of \cite{Fuchs:CanonicalFragmentsOfSRP} as a black box, the reasons for this will remain obscure here; let us just say that the problem is the iteration theorem \cite[Thm.~4.17]{Fuchs:CanonicalFragmentsOfSRP}. The exact relationship between subcompleteness and $\infty$-subcompleteness is not well understood, but subcompleteness is a potentially more restrictive requirement than $\infty$-subcompleteness, so that the principle \infSC-\DSRP could be stronger than \SC-\DSRP. But all the consequences of $\infSC$-\DSRP presented in Section \ref{subsec:DSRPconsequences} also follow from \SC-\DSRP, and the subcomplete fragment of \DSRP can be characterized by replacing ``spread out'' with ``fully spread out'' everywhere (see \cite[Def.~2.33]{Fuchs:CanonicalFragmentsOfSRP}.) The following definition summarizes the concepts needed for the statement of the result.

\begin{defn}
For a forcing class $\Gamma$ and a cardinal $\kappa$, $\BFA(\Gamma,{\le}\kappa)$, the \emph{${\le}\kappa$-bounded forcing axiom for $\Gamma$,} says that if $\P\in\Gamma$ and $\B$ is the complete Boolean algebra of $\P$, and if $\mathcal{A}$ is a collection of at most $\omega_1$ many maximal antichains in $\B$, each of which has cardinality at most $\kappa$, then there is a filter in $\B$ that meets each antichain in $\mathcal{A}$. We write $\BSCFA({\le}\kappa)$ in case $\Gamma$ is the class of all subcomplete forcing notions.

For a regular cardinal $\kappa\ge\omega_2$ and an uncountable cardinal $\lambda$, the principle \SC-$\DSRP(\kappa,\lambda)$ asserts that whenever $\mathcal{S}$ is a nonempty collection of subsets of $[H_\kappa]^\omega$ that are stationary in $H_\kappa$, such that $\mathcal{S}$ has size at most $\lambda$, and $\vT$ is a sequence of pairwise disjoint stationary subsets of $\omega_1$, and $\kla{\mathcal{S},\vT}$ is \SC-projective stationary, then $\DSRP(\mathcal{S},\vT)$ holds.
\end{defn}

Before moving to the separation result, let us make an observation related to the two cardinal version of \DSRP introduced in the previous definition.

\begin{obs}
Let $\Gamma$ be $\SSP$, $\infSC$ or \SC. Let $\mathcal{S}$ be a collection of up to $\omega_1$ many sets $\Gamma$-projective stationary in $H_\kappa$, for some regular $\kappa\ge\omega_2$. Then $\Gamma$-$\SRP(\kappa)$ implies $\eDSRP(\mathcal{S})$.
\end{obs}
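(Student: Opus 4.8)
The plan is to reduce $\eDSRP(\mathcal{S})$ to a single application of $\Gamma$-$\SRP(\kappa)$, using the $\Gamma$-projective stationarity of a single cleverly chosen set $S^*$ built from $\mathcal{S}$ together with a fixed partition of $\omega_1$. Enumerate $\mathcal{S}=\{S_i \st i<\omega_1\}$ (allowing repetitions if $|\mathcal{S}|<\omega_1$), and fix a maximal partition $\vec{T}=\seq{T_i}{i<\omega_1}$ of $\omega_1$ into stationary sets, so that $\bigcup_{i<\omega_1}T_i=\omega_1$. The idea is to consider the pair $\kla{\mathcal{S},\vec{T}}$ and its forcing $\P^{\DSRP}_{\mathcal{S},\vec{T}}$; by Theorem \ref{thm:SSP-AdequacyIsPSonVecT} (for $\Gamma=\SSP$) or Theorem \ref{thm:CharacterizationOfInfSCAdquacy} (for $\Gamma=\infSC$, and similarly for $\SC$ via the ``fully spread out'' characterization), together with Remark \ref{rem:ProjStatOnVecTifTisMaximal} (resp.\ Remark \ref{rem:SpreadOutOnvecTSimplified}), the hypothesis that each $S_i$ is $\Gamma$-projective stationary in $H_\kappa$ guarantees that $\kla{\mathcal{S},\vec{T}}$ is $\Gamma$-projective stationary.

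The key step is then to recast this as an instance of ordinary $\Gamma$-$\SRP(\kappa)$: rather than invoking $\Gamma$-$\DSRP(\kappa)$ (which is not assumed here — only $\Gamma$-$\SRP(\kappa)$ is), I would show directly that $\P^{\DSRP}_{\mathcal{S},\vec{T}}$ is forcing-equivalent to, or densely embeds into, a poset of the form $\P_{S^*}$ for a single $\Gamma$-projective stationary set $S^*\sub[H_{\kappa'}]^\omega$, where $\kappa'$ is chosen large enough to code the bookkeeping (one can take $\kappa'=\kappa$ if $\kappa$ is large enough to contain $\mathcal{S}$ after coding, or pass to $H_\theta$-style coding and note $\P^{\DSRP}_{\mathcal{S},\vec{T}}$ has size $\le\kappa$ since $|\mathcal{S}|\le\omega_1\le\kappa$ and conditions are countable). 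Concretely, a condition in $\P^{\DSRP}_{\mathcal{S},\vec{T}}$ is essentially a continuous $\in$-chain of countable sets that, at each stage $\alpha\in T_i$, lands in $S_i$; this is exactly a continuous $\in$-chain through the single set
\[S^*=\{x\in[H_{\kappa'}]^\omega \st x\cap\omega_1\in T_i \implies x\cap H_\kappa\in S_i \text{ for all relevant } i\},\]
and the equivalence $S^*$ is $\Gamma$-projective stationary $\iff$ $\P_{S^*}\in\Gamma$ $\iff$ $\kla{\mathcal{S},\vec{T}}$ is $\Gamma$-projective stationary is what Fact \ref{fact:SisProjStatIffP_SisSSP} and Definition \ref{def:GammaProjectiveStationary} deliver. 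Applying $\Gamma$-$\SRP(\kappa')$ to $S^*$ yields a continuous $\in$-chain $\seq{Q_\alpha}{\alpha<\omega_1}$ through $S^*$; by the maximality of $\vec{T}$ and the defining property of $S^*$, for each $i$ the set $\{\alpha \st Q_\alpha\cap H_\kappa\in S_i\}$ is stationary (it contains a tail of $T_i$ intersected with a club), and moreover every $Q_\alpha$ lies in $\bigcup\mathcal{S}$ because $\bigcup_i T_i=\omega_1$. Setting $W=\bigcup_\alpha Q_\alpha$ and checking clauses (1)--(4) of the ``exact diagonal chain'' part of Definition \ref{def:DSRP-preliminiary} gives the desired $W$; stationarity of the set of such $W$ follows from the stationarity built into the $\SRP$-style reflection (work inside an elementary submodel of $H_\theta$ containing $\mathcal{S}$, $\vec{T}$, and a given algebra, and apply the argument there).

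The main obstacle I anticipate is the bookkeeping in the last paragraph: ensuring that the continuous $\in$-chain produced by $\Gamma$-$\SRP$ not only threads each $S_i$ stationarily often but also satisfies the exactness clause $Q_\alpha\in\bigcup(W\cap\mathcal{S})$ for \emph{every} $\alpha<\omega_1$, and that $\mathcal{S}\cap W=\{S_i \st i<\omega_1\}$-type enumeration conditions hold. This is precisely where maximality of $\vec{T}$ (hence $\bigtriangledown_i T_i$ covers a club, and $\bigcup_i T_i=\omega_1$) is essential, and where one must be careful that the single set $S^*$ has been defined so that membership in $S^*$ forces $x\cap H_\kappa\in S_{i}$ for the unique $i$ with $x\cap\omega_1\in T_i$ — this is why we need a partition and not merely a disjoint sequence. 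Verifying that $S^*$ is genuinely $\Gamma$-projective stationary (equivalently $\P_{S^*}\in\Gamma$) when $\Gamma=\infSC$ or $\SC$ is the one place requiring the spread-out machinery, but this is already packaged in Theorems \ref{thm:SpreadOutIffinfSCProjectiveStationary} and \ref{thm:CharacterizationOfInfSCAdquacy}, so it reduces to a citation. Everything else is routine chain-through-a-stationary-set bookkeeping of the kind carried out in the proof of Lemma \ref{lem:DSRPimpliesDRP}.
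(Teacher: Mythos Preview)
Your overall strategy matches the paper's: fix a maximal partition $\vec{T}$ of $\omega_1$, enumerate $\mathcal{S}=\{S_i\st i<\omega_1\}$, define the single set
\[
S=\{x\in[H_\kappa]^\omega\st\forall i<\omega_1\ (x\cap\omega_1\in T_i\longrightarrow x\in S_i)\},
\]
show it is $\Gamma$-projective stationary, apply $\Gamma$-$\SRP(\kappa)$, and read off an exact diagonal chain. This is exactly what the paper does, and at level $\kappa$---there is no need to pass to a larger $\kappa'$ or to code $\mathcal{S}$ into the ambient structure.

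The gap is in how you propose to verify that $S$ is $\Gamma$-projective stationary. You argue indirectly: first observe that $\kla{\mathcal{S},\vec{T}}$ is $\Gamma$-projective stationary via Theorems~\ref{thm:SSP-AdequacyIsPSonVecT}/\ref{thm:CharacterizationOfInfSCAdquacy}, then claim $\P^{\DSRP}_{\mathcal{S},\vec{T}}$ is forcing-equivalent to (or densely embeds into) $\P_{S}$, and conclude $\P_S\in\Gamma$. But conditions in $\P^{\DSRP}_{\mathcal{S},\vec{T}}$ carry a second coordinate $\vec{S}^p$ which is not determined by the chain $\vec{Q}^p$, so there is no obvious dense embedding in either direction, and the classes $\Gamma$ in question are not known to be closed under the kind of projection you would need. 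Your assertion that this ``reduces to a citation'' of Theorems~\ref{thm:SpreadOutIffinfSCProjectiveStationary} and \ref{thm:CharacterizationOfInfSCAdquacy} is not right: those results characterize when a \emph{pair} $\kla{\mathcal{S},\vec{T}}$ is $\Gamma$-projective stationary, not when the single set $S$ is.

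The paper instead verifies $\Gamma$-projective stationarity of $S$ directly, case by case. For $\Gamma=\SSP$: given stationary $A\sub\omega_1$, maximality of $\vec{T}$ yields $i_0$ with $A\cap T_{i_0}$ stationary; projective stationarity of $S_{i_0}$ gives that $\{x\in S_{i_0}\st x\cap\omega_1\in A\cap T_{i_0}\}$ is stationary, and this set is contained in $\{x\in S\st x\cap\omega_1\in A\}$. For $\Gamma=\infSC$: given the usual data $N,X,a$ with $\vec{S},\vec{T}\in X$, maximality forces $\delta=X\cap\omega_1\in T_{i_0}$ for some $i_0<\delta$, hence $S_{i_0}\in X$; since $S_{i_0}$ is spread out, there is an isomorphism $\pi:N|X\to N|Y$ fixing the relevant parameters with $Y\cap H_\kappa\in S_{i_0}$, and since $Y\cap\omega_1=\delta\in T_{i_0}$, this gives $Y\cap H_\kappa\in S$. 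The case $\Gamma=\SC$ is identical, using ``fully spread out.'' This direct argument is short and avoids the forcing-equivalence issue entirely.
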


\begin{proof}
Let $\vT$ be a maximal partition of $\omega_1$ into stationary sets, and let $\seq{S_i}{i<\omega_1}$ enumerate $\mathcal{S}$. Let
\[S=\{x\in[H_\kappa]^\omega\st\forall i<\omega_1\ x\cap\omega_1\in T_i \longrightarrow x\in S_i\}.\]
\noindent\emph{Claim:} $S$ is $\Gamma$-projective stationary.

\noindent\emph{Case 1:} $\Gamma=\SSP$.

Then $\Gamma$-projective stationarity is just projective stationarity. So let $A\sub\omega_1$ be stationary. By maximality of $\vT$, let $i_0<\omega_1$ be such that $A\cap T_{i_0}$ is stationary. Since $S_{i_0}$ is projective stationary,
\[\{x\in S_{i_0}\st x\cap\omega_1\in A\cap T_{i_0}\}\]
is stationary. But this set is contained in $\{x\in S\st x\cap\omega_1\in A\}$, making the latter set stationary, and hence $S$ is projective stationary.

\noindent\emph{Case 2:} $\Gamma=\infSC$.

Then $\Gamma$-projective stationarity is being spread out. So let $\theta$ be a sufficiently large cardinal, $H_\theta\sub N=L_\tau^A\models\ZFCm$, $N|X\prec N$, $X$ countable and full, $\vec{S},\vT,S,a\in X$. Since $\vT$ is maximal, $Z=\omega_1\ohne\bigtriangledown_{i<\omega_1}T_i$ is not stationary. Since $Z\in X$, it follows that $\delta=X\cap\omega_1\notin Z$. So $\delta\in\bigtriangledown_{i<\omega_1}T_i$. Let $\delta\in T_{i_0}$. Then $i_0<\delta$. So $S_{i_0}\in X$. Since $S_{i_0}$ is spread out and $S_{i_0}\in X$, let $\pi:N|X\To N|Y\prec N$ be an isomorphism fixing $\vec{S},\vT,S,a$, such that $Y\cap H_\kappa\in S_{i_0}$. Since $Y\cap\omega_1=X\cap\omega_1=\delta\in T_{i_0}$, it follows that $Y\cap H_\kappa\in S$, verifying that $S$ is spread out.

\noindent\emph{Case 3:} $\Gamma=\SC$.

In this case, one has to work with fully spread out sets instead of spread out sets (see \cite[Def.~2.33]{Fuchs:CanonicalFragmentsOfSRP}). The argument of case 2 goes through.

This proves the claim. Thus, by $\Gamma$-$\SRP(\kappa)$, there is a continuous $\in$-chain of length $\omega_1$ through $S$, and this easily implies $\eDSRP(\mathcal{S})$.
\end{proof}

\begin{thm}
Let $\Gamma$ be the class of all subcomplete, uncountable cofinality preserving forcing notions. If \ZFC is consistent with $\BFA(\Gamma,{\le}\omega_2)$, then \ZFC is consistent with the conjunction of the following statements:
\begin{enumerate}[label=(\arabic*)]
  \item
  \label{item:CH}
  \CH,
  \item
  \label{item:BFAGammaOmega2}
  $\BFA(\Gamma,{\le}\omega_2)$,
  \item
  \label{item:NotBSCFAOmega2}
  $\neg\BSCFA(\omega_2)$,
  \item
  \label{item:DSRPOmega2Omega2}
  \SC-$\DSRP(\omega_2,\omega_2)$.
\end{enumerate}
\end{thm}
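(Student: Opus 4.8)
The plan is to produce the desired model by a single forcing iteration over a model of $\BFA(\Gamma,{\le}\omega_2)$, where $\Gamma$ is the class of subcomplete, uncountable-cofinality-preserving forcings. The key observation is that statements \ref{item:CH}, \ref{item:BFAGammaOmega2} and \ref{item:DSRPOmega2Omega2} are all consequences of $\BFA(\Gamma,{\le}\omega_2)$ itself, while \ref{item:NotBSCFAOmega2} needs to be arranged separately, and the main work is to check that arranging \ref{item:NotBSCFAOmega2} does not destroy the others.

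First I would record that $\BFA(\Gamma,{\le}\omega_2)$ implies \CH: subcomplete forcings add no reals, and the bounded forcing axiom for such a class, applied to (names for) a surjection $\omega_1\to\reals$ added by an appropriate subcomplete collapse such as $\Col(\omega_1,2^\omega)$ (which preserves uncountable cofinalities), yields \CH by the standard bounded-forcing-axiom argument; this is essentially the reasoning that $\BSCFA$ implies \CH. Next, \SC-$\DSRP(\omega_2,\omega_2)$ should follow from $\BFA(\Gamma,{\le}\omega_2)$: given a pair $\kla{\mathcal{S},\vT}$ with $\mathcal{S}$ of size at most $\omega_2$ which is \SC-projective stationary, the poset $\P=\P^\DSRP_{\mathcal{S},\vT}$ is subcomplete, and under \CH its relevant maximal antichains (those needed in the proof of Lemma \ref{lem:FA(P-DSRP)impliesDSRPatSAndT}) have size at most $\omega_2$ — here one uses that $\card{H_\kappa}=\omega_2$ when $\kappa=\omega_2$ and \CH holds, so $\P$ has size $\omega_2$ and hence every antichain has size at most $\omega_2$. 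Moreover $\P$ preserves uncountable cofinalities because it is countably distributive by Lemma \ref{lem:CountableDistributivityOfP-DSRP} (projective stationarity on $T_0$ follows from \SC-projective stationarity). So $\P\in\Gamma$, and the bounded forcing axiom for $\Gamma$ restricted to antichains of size $\le\omega_2$ is enough to run the argument of \cite[Lemma 2.53]{Woodin:ADforcingAxiomsNonstationaryIdeal} / Lemma \ref{lem:FA(P-DSRP)impliesDSRPatSAndT} and conclude $\DSRP(\mathcal{S},\vT)$.

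It remains to secure $\neg\BSCFA(\omega_2)$, i.e.\ the failure of the bounded subcomplete forcing axiom allowing antichains of size up to $\omega_2$ (not merely the cofinality-preserving version). The idea is to force, over the ground model of $\BFA(\Gamma,{\le}\omega_2)$, with a subcomplete forcing that destroys some instance of $\BSCFA(\omega_2)$ in a way no cofinality-preserving subcomplete forcing can repair — for instance, a subcomplete forcing that kills cofinalities (so it is subcomplete but not in $\Gamma$), such as a subcomplete collapse of $\omega_2$ to $\omega_1$, and arrange (or observe) that in the extension there is an $\omega_1$-sequence of maximal antichains of size $\le\omega_2$ in some subcomplete poset witnessing $\neg\BSCFA(\omega_2)$; the standard example is a subcomplete forcing of Namba type or the collapse itself, whose Boolean algebra has an $\omega_1$-family of antichains of size $\omega_2$ with no filter meeting all of them. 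One then shows $\BFA(\Gamma,{\le}\omega_2)$ is preserved by (and in fact is a consequence of the iteration producing) this extension — the cleanest route is to note that $\BFA(\Gamma,{\le}\omega_2)$ is itself obtainable by iterating forcings in $\Gamma$ and that adding one cofinality-collapsing subcomplete step at the end, suitably interleaved, leaves $\BFA(\Gamma,{\le}\omega_2)$ intact because the relevant antichains are reflected, while the new step guarantees $\neg\BSCFA(\omega_2)$ in a fixed way.

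The main obstacle I expect is precisely the simultaneous bookkeeping in the last step: making $\neg\BSCFA(\omega_2)$ \emph{robustly} true — true after all the further forcings in $\Gamma$ one needs to force $\BFA(\Gamma,{\le}\omega_2)$ — while keeping \CH and $\BFA(\Gamma,{\le}\omega_2)$. The delicate point is that $\Gamma$ excludes cofinality-collapsing subcomplete forcings, so a witness to $\neg\BSCFA(\omega_2)$ coming from such a forcing is not ``fixable'' by anything in $\Gamma$; one must verify that no $\Gamma$-forcing used to obtain $\BFA(\Gamma,{\le}\omega_2)$ inadvertently reinstates $\BSCFA(\omega_2)$. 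This is exactly the sort of argument the excerpt attributes to \cite{Fuchs:CanonicalFragmentsOfSRP} and flags as sensitive to the distinction between subcompleteness and $\infty$-subcompleteness (the iteration theorem \cite[Thm.~4.17]{Fuchs:CanonicalFragmentsOfSRP}); I would cite that iteration theorem as a black box and present the consistency proof as: start in a model of $\BFA(\Gamma,{\le}\omega_2)$ (hence of \CH and \SC-$\DSRP(\omega_2,\omega_2)$ by the above), force with a single cofinality-collapsing subcomplete poset chosen so that in the extension a fixed subcomplete poset witnesses $\neg\BSCFA(\omega_2)$, and invoke preservation of $\BFA(\Gamma,{\le}\omega_2)$ under this operation to conclude all four statements hold simultaneously.
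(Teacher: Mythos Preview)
Your derivation of \ref{item:DSRPOmega2Omega2} from \ref{item:CH}+\ref{item:BFAGammaOmega2} is along the right lines and matches the paper's in spirit: the poset $\P^{\DSRP}_{\mathcal{S},\vT}$ is subcomplete by hypothesis and countably distributive, hence in $\Gamma$, and the bounded forcing axiom applied to it yields the diagonal chain. One correction, though: \CH alone does not give $|H_{\omega_2}|=\omega_2$; you need $2^{\omega_1}=\omega_2$. The paper obtains this from \ref{item:CH}+\ref{item:BFAGammaOmega2} by observing that $\BFA(\Gamma,{\le}\omega_2)$ implies $\SC$-$\SRP(\omega_2)$, which together with \CH gives $2^{\omega_1}=\omega_2$. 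With that in hand the paper argues via the $\Sigma_1$-embedding characterization of bounded forcing axioms (collapsing an elementary substructure of $H_\theta$ to a transitive model of size $\omega_2$ and pulling back a witness), rather than your direct antichain-counting route; but once $|\P|\le\omega_2$ is established the two arguments are essentially equivalent.

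Where your proposal diverges substantially is in the handling of \ref{item:NotBSCFAOmega2}. The paper simply cites \cite[Thm.~4.25]{Fuchs:CanonicalFragmentsOfSRP} as a black box for the consistency of \ref{item:CH}+\ref{item:BFAGammaOmega2}+\ref{item:NotBSCFAOmega2} together, so that the implication \ref{item:CH}+\ref{item:BFAGammaOmega2}$\implies$\ref{item:DSRPOmega2Omega2} is the only new work. You instead propose to start from a model of \ref{item:BFAGammaOmega2} and then force to destroy $\BSCFA(\omega_2)$ while preserving $\BFA(\Gamma,{\le}\omega_2)$. That amounts to re-proving the cited theorem, and your sketch does not constitute a proof: the assertion that a single cofinality-collapsing subcomplete step produces a \emph{robust} failure of $\BSCFA(\omega_2)$, together with preservation of $\BFA(\Gamma,{\le}\omega_2)$, is exactly the delicate iteration/preservation work you yourself flag as the main obstacle. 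The cleanest fix is to adopt the paper's structure: take the model of \cite[Thm.~4.25]{Fuchs:CanonicalFragmentsOfSRP} as given, and supply only the derivation of \ref{item:DSRPOmega2Omega2} from \ref{item:CH}+\ref{item:BFAGammaOmega2} (with the $2^{\omega_1}=\omega_2$ step made explicit).
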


\noindent\emph{Note:} $\SC$-$\DSRP(\omega_2,\omega_2)+\CH$ has interesting consequences that go beyond \SC-$\SRP(\omega_2)$. For example, it implies $\eDSR(\omega_2,S^{\omega_2}_\omega)$, see Theorem \ref{thm:DSRP=>eDSR} - the collection $\mathcal{S}$ used in the proof of this theorem has size $\kappa=\omega_2$ in our situation.

\begin{proof}
It was shown in \cite[Thm.~4.25]{Fuchs:CanonicalFragmentsOfSRP} that under the assumptions of the theorem, there is a model in which \ZFC holds, together with \ref{item:CH}-\ref{item:NotBSCFAOmega2}. So it suffices to show that \ref{item:CH} + \ref{item:BFAGammaOmega2} implies \ref{item:DSRPOmega2Omega2}.

To see this, let $\kla{\mathcal{S},\vT}$ be $\SC$-projective stationary, where $\mathcal{S}$ consists of subsets of $[H_\kappa]^\omega$ stationary in $H_\kappa$ and has cardinality at most $\omega_2$. Let $\theta$ be large enough that $\mathcal{S}\sub H_\theta$, and let \[\calM=\kla{H_\theta,\in,\mathcal{S},F,<^*,\vT,0,1,\ldots,\xi,\ldots}\]
be a model of a language of size $\omega_1$ with some extra predicate $F$, a well-order $<^*$, constant symbols $\dot{\xi}$ for every countable ordinal $\xi$, and with a constant symbol for $T_i$, for every $i<\omega_1$. We have to find an $M\prec\calM$ of size $\omega_1$, with $\omega_1\sub M$, such that there is a diagonal chain through $\mathcal{S}$ up to $M$ wrt.~$\vT$. Let $\bar{\calM}\prec\calM$ be the transitive collapse of the hull of $H_{\omega_2}\cup\mathcal{S}$ in $\calM$. So $\bar{\calM}$ has cardinality $\omega_2$, since $2^{\omega_1}=\omega_2$ - it was shown in \cite[Lemma 4.24.(2)]{Fuchs:CanonicalFragmentsOfSRP} that $\BFA(\Gamma,{\le}\omega_2)$ implies \SC-$\SRP(\omega_2)$, and this, in turn, together with \CH, implies $2^{\omega_1}=\omega_2$, by \cite[Thm.~3.19]{Fuchs:CanonicalFragmentsOfSRP} and the following remarks, and \cite[Fact 3.15]{Fuchs:CanonicalFragmentsOfSRP}. Let $G$ be $\P=\P^{\DSRP}_{\mathcal{S},\vT}$-generic over $\V$. In $\V[G]$, let $\kla{\vQ,\vS}$ be the sequence added by $G$. Then $\bigcup_{i<\omega_1}Q_i=H_{\omega_2}$ and $\mathcal{S}=\{S_i\st i<\omega_1\}$. So in $\V[G]$, the following statement is true about $\bar{\calM}$: there are sequences $\vQ'$ and $\vS'$ of length $\omega_1^{\bar{\calM}}$ such that $\vQ'$ is a continuous $\in$-chain unioning up to $H_{\omega_2}^{\bar{\calM}}$, for every $i<\omega_1$, $S'_i\in\mathcal{S}^{\bar{\calM}}$, and if $j<\omega_1^{\bar{\calM}}$ is such that $i\in T^{\bar{\calM}}_j$, then $Q'_i\in S'_j$, and such that $\mathcal{S}^{\bar{\calM}}=\{S'_i\st i<\omega_1\}$. This is a $\Sigma_1$ statement about $\bar{\calM}$ forced to be true by $\P$, so since $\P\in\Gamma$ and $\BFA(\Gamma,{\le}\omega_2)$ holds, there are by \cite[Fact 4.21]{Fuchs:CanonicalFragmentsOfSRP} (see also \cite[Thm.~1.3]{ClaverieSchindler:AxiomStar}) a transitive model $\tilde{\calM}$ of the same language as $\bar{\calM}$ and an elementary embedding $j:\tilde{\calM}\prec\bar{\calM}$, so that the same $\Sigma_1$ statement is true about $\tilde{\calM}$ in $\V$. Note that $\omega_1\sub\tilde{\calM}$ and $j\rest\omega_1=\id$, since the language contains constant symbols for all the countable ordinals. If the witnessing sequences are $\vec{\tilde{Q}}$ and $\vec{\tilde{S}}$, then, letting $\pi:\bar{\calM}\To\calM$ be the inverse of the collapse, it follows that $\pi\rest H_{\omega_2}=\id$, and so, if we define $\vQ'$ by $Q'_i=j(\tQ_i)$ and $\vS'$ by $S'_i=j(\tilde{S}_i)$, then $\kla{\vQ',\vS'}$ is a diagonal chain through $\mathcal{S}$ up to $M=\ran(\pi\circ j)$ with respect to $\vT$, where $\omega_1\sub M$ and $M\prec\calM$.
\end{proof}

The last separation result concerns the diagonal reflection principle of \cite{Cox:DRP} and its relationship to cardinal arithmetic. This principle is stronger than the principles of the form $\wDRPIA(\kappa)$ we have considered. 

\begin{defn}
Let $\theta$ be an uncountable regular cardinal. The principle $\DRP(\theta,\IA)$ states that there are stationarily many $M\in[H_{(\theta^\omega)^+}]^{\omega_1}$ such that $M\cap H_\theta\in\IA$ and for every stationary subset $R\in M$ of $[\theta]^\omega$, $R\cap[M\cap\theta]^\omega$ is stationary in $M\cap\theta$.

The principle $\DRP(\IA)$ states that $\DRP(\theta,\IA)$ holds for all regular $\theta\ge\omega_2$.
\end{defn}

\begin{thm}
$\DRP(\IA)$ does not limit the size of $2^{\omega_1}$.
\end{thm}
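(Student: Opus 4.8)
The plan is to start with a model of $\DRP(\IA)$ in which $\CH$ holds and then to blow up $2^{\omega_1}$ by a $\sigma$-closed forcing that preserves $\DRP(\IA)$. Assuming the consistency of a supercompact cardinal, fix $V_0\models\ZFC+\CH+\DRP(\IA)$ (the consistency of $\DRP(\IA)$ together with $\CH$ is due to Cox \cite{Cox:DRP}). Given any cardinal $\kappa$, pick $\lambda\ge\kappa$ with $\lambda^{\omega_1}=\lambda$, and let $\P=\Add(\omega_1,\lambda)$ be the forcing adding $\lambda$ subsets of $\omega_1$ with countable conditions. Since $\P$ is $\sigma$-closed it adds no reals (so $\CH$ is preserved) and no new $\omega$-sequences; and since $\CH$ holds, a $\Delta$-system argument shows $\P$ has the $\omega_2$-chain condition, so it preserves all cardinals and cofinalities. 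In $V_0[G]$ one has $2^{\omega_1}=\lambda\ge\kappa$, so, $\kappa$ being arbitrary, the theorem follows once we show that $\P$ preserves $\DRP(\theta,\IA)$ for every regular $\theta\ge\omega_2$.

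To prove this preservation, fix a regular $\theta$ and set $\mu=(\theta^\omega)^+$, which is absolute between $V_0$ and $V_0[G]$. The first step is a reduction to a small piece of the forcing: by the $\omega_2$-cc, every nice $\P$-name for a subset of $[\theta]^\omega$ and every nice $\P$-name for an element of $H_\mu$ has hereditary size at most $\theta^\omega<\mu$, hence mentions only coordinates from some $J\in[\lambda]^{\le\theta^\omega}\cap V_0$, and the factor $\Add(\omega_1,J)$ is $\sigma$-closed and lies in $H_\mu^{V_0}$. So, to witness $\DRP(\theta,\IA)$ in $V_0[G]$ for a given algebra $F$ on $H_\mu^{V_0[G]}$, it suffices to work over $V_0[G_{\mathbb Q}]$ for a suitable $\sigma$-closed $\mathbb Q\in H_\mu^{V_0}$ with $G_{\mathbb Q}=G\cap\mathbb Q$ generic over $V_0$ and with a name for $F$ inside $H_\mu^{V_0}$. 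Now apply $\DRP(\theta,\IA)$ in $V_0$ to a structure on $H_\mu^{V_0}$ whose parameters include $\mathbb Q$, its forcing relation, a name for $F$, and a well-order, to get $N\prec(H_\mu^{V_0},\in,\dots)$ with $\omega_1\subseteq N$, $|N|=\omega_1$, $N\cap H_\theta\in\IA^{V_0}$ with an approachability chain $\seq{N_\alpha}{\alpha<\omega_1}$ (each $N_\alpha\in N$), and such that $N$ reflects every $V_0$-stationary subset of $[\theta]^\omega$ lying in $N$. The candidate is
\[
M:=\{\,\dot a^{G}\st\dot a\in N\text{ a }\mathbb Q\text{-name}\,\}.
\]
Since names compose, $M$ is closed under $F$; clearly $\omega_1\subseteq M$ and $|M|=\omega_1$; and using that by the $\omega_2$-cc a name in $N$ for an ordinal below $\theta$ has fewer than $\omega_2$ possible values, all in $N$, one gets $M\cap\theta=N\cap\theta$. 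That $M\cap H_\theta^{V_0[G]}$ is internally approachable in $V_0[G]$ follows from the standard analysis of generic extensions of internally approachable structures, using that $\dot a^G=\dot a^{G\cap N_\alpha}$ for $\dot a\in N_\alpha$, and that $G\cap(\mathbb Q\cap N_\alpha)$, together with $N_\alpha$, has a name belonging to $N_{\alpha+1}$ and evaluates into $H_{\omega_2}^{V_0[G]}\subseteq H_\theta^{V_0[G]}$.

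The hard part is to verify that $M$ reflects \emph{every} stationary $R\in M$ with $R\subseteq[\theta]^\omega$, including those \emph{added} by $\P$ — for instance the liftings to $[\theta]^\omega$ of the generically added stationary subsets of $\omega_1$ — which need not lie in $V_0$. For $R\in V_0$ there is nothing to do: $\sigma$-closed forcing preserves stationarity of subsets of $[\theta]^\omega$ and of $[N\cap\theta]^\omega$ (a $V_0$-club in either remains a club, since no new $\omega$-sequences appear), so the reflection property of $N$ established in $V_0$ transfers verbatim to $V_0[G]$. For a new $R$ we have a nice $\mathbb Q$-name $\dot R\in N$, and, since $\{N_\alpha\cap\theta\st\alpha<\omega_1\}$ is a chain-club in $[M\cap\theta]^\omega$, it suffices to prove in $V_0[G]$ that $\{\alpha<\omega_1\st N_\alpha\cap\theta\in R\}$ is stationary in $\omega_1$. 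I expect this to be the main obstacle: it should come out of a name-calculus argument that plays the approachability chain of $N$ against the fact that both $\mathbb Q$ and $\dot R$ are captured inside $N$, and it is exactly the kind of preservation of internally approachable reflection under $\sigma$-closed forcing that accounts for the robustness of $\DRP(\IA)$; I would model it on the preservation arguments in \cite{Cox:DRP}. With this in hand, $\DRP(\IA)$ holds in $V_0[G]$, which has $2^{\omega_1}\ge\kappa$, completing the proof.
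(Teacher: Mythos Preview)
Your outline has a genuine gap at exactly the place you flag: the reflection of \emph{new} stationary sets $R\in M$ to $[M\cap\theta]^\omega$. You reduce this to showing that $\{\alpha<\omega_1 : N_\alpha\cap\theta\in R\}$ is stationary and then say you ``expect'' a name-calculus argument to deliver this, modeled on preservation arguments in \cite{Cox:DRP}. But no such preservation argument appears there, and there is no evident reason why a single $N$ chosen in $V_0$ to reflect the $V_0$-stationary sets it contains should automatically reflect the $V_0[G]$-stationary sets whose names happen to lie in $N$. Nothing in the hypothesis $\DRP(\IA)^{V_0}$ speaks to that. So as written the proof is incomplete, and it is not clear the hole can be filled starting only from $\CH+\DRP(\IA)$.

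The paper sidesteps this entirely by strengthening the ground-model hypothesis: it begins with $\CH$ together with the forcing axiom $\FA^{+\omega_1}(\sigma\text{-closed})$, which implies $\DRP(\IA)$, and shows that this \emph{forcing axiom} is preserved by $\Add(\omega_1,\lambda)$. The preservation is a short lifting argument: given any $\sigma$-closed $\dot{\mathbb R}$ in the extension, $\P*\dot{\mathbb R}$ is $\sigma$-closed in $V$, so the embedding characterization of $\FA^{+\omega_1}(\sigma\text{-closed})$ yields a generic elementary $j:V\prec N$ with $\crit(j)=\omega_2$ and a $V$-generic $G*H\in N$; since $\P$ is $\omega_2$-cc, $j$ lifts through $\P$. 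This avoids ever having to argue directly that a particular $M$ reflects newly added stationary sets --- the forcing axiom in the extension re-derives $\DRP(\IA)$ from scratch. The moral is that $\DRP(\IA)$ is not the robust invariant here; $\FA^{+\omega_1}(\sigma\text{-closed})$ is.
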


\noindent\emph{Note:} This shows that this principle, if consistent, does not imply $\eRefl(\omega_1,S^{\omega_2}_\omega)$; see Fact \ref{fact:ExactReflectionAndCardinalArithmetic}. In particular, it does not imply \SRP.

\begin{proof}
We will show that the theory ``\CH plus the forcing axiom $\text{FA}^{+\omega_1}(\sigma \text{-closed})$'' is preserved after adding any number of Cohen subsets of $\omega_1$.  This will suffice, since
\begin{enumerate}
 \item $\text{FA}^{+\omega_1}(\sigma \text{-closed})$ implies $\DRP(\theta,\IA)$, for all regular $\theta>\omega_1$ (\cite[Theorem 4.1]{Cox:DRP}); and
 \item \CH is consistent with $\text{FA}^{+\omega_1}(\sigma \text{-closed})$ (\cite{FMS:MM1} shows that forcing with $\text{Col}(\omega_1,<\kappa)$ when $\kappa$ is supercompact produces a model satisfying this).
\end{enumerate}

So assume \CH plus $\text{FA}^{+\omega_1}(\sigma \text{-closed})$ both hold in $V$.  Pick any cardinal $\lambda$, and let $\mathbb{P}$ be the countable support product of $\lambda$-many copies of $\text{Add}(\omega_1)$. Since \CH holds, $\mathbb{P}$ has the $\omega_2$-cc, so in particular, $\mathbb{P}$ preserves all cardinals $\ge\omega_2$, and forces $2^{\omega_1} \ge \lambda$.  It remains to show that $\text{FA}^{+\omega_1}(\sigma \text{-closed})$ is preserved.

Let $p$ be any condition in $\mathbb{P}$, and $\dot{\mathbb{R}}$ be a $\mathbb{P}$-name for a $\sigma$-closed poset.  Then $\mathbb{P}*\dot{\mathbb{R}}$ is $\sigma$-closed.  Fix a regular $\theta$ such that $\mathbb{P}*\dot{\mathbb{R}} \in H_\theta$.  Since $\V$ models $\text{FA}^{+\omega_1}(\sigma \text{-closed})$, \cite[Theorem 4.5]{Cox:FAapproachabilitySSR} implies that in some generic extension $W$ of $V$, there is an elementary embedding $j: \V \prec N$
such that:
\begin{enumerate}
 \item $\text{crit}(j) = \omega_2^\V =:\kappa$;
 \item $j \restriction H^\V_\theta \in N$;
 \item $|H^\V_\theta|^N=\aleph_1$;
 \item $H^\V_\theta$ is an element of the (transitivized) wellfounded part of $N$; and
 \item There is some $G*H \in N$ that is generic over $V$ for $(\mathbb{P} \restriction p )*\dot{\mathbb{R}}$.
\end{enumerate}

Since $\mathbb{P}$ has the $\kappa$-cc in $\V$ and $\text{crit}(j)=\kappa$, the map $j \restriction \mathbb{P}: \mathbb{P} \to j(\mathbb{P})$ is a regular embedding; so if we let $G'$ be generic over $W$ for the poset $j(\mathbb{P})/j``G$, it follows that $G'$ extends $j``G$, and in $W[G']$ the map $j$ lifts to an elementary embedding
\[
\widetilde{j}: V[G]  \prec N[G'].
\]
Since $G*H$ was already in $N$ and was generic over $V$ for $\mathbb{P}*\dot{\mathbb{R}}$, then in particular $H \in N[G']$ and $H$ is generic over $V[G]$ for $\mathbb{R}=\dot{\mathbb{R}}_G$.  Also, since both $j \restriction H^V_\theta$ and $G'$ are elements of $N$, it follows that $\widetilde{j} \restriction H^V_\theta[G]$ is an element of $N[G']$.  Hence
by (the reverse direction of)  \cite[Theorem 4.5]{Cox:FAapproachabilitySSR}, the forcing axiom for $\mathbb{R}$ holds in $V[G]$.
\end{proof}

\section{Open questions}
\label{sec:Questions}

In Subsection \ref{subsec:Unfiltered}, we presented consequences of \DSRP that neither follow from \SRP nor from $\DRP_{\IA}$. However, we have not separated the conjunction of $\SRP$ and $\DRP_{\IA}$ from \DSRP, even though it seems unlikely that this conjunction implies \DSRP. It would be interesting to know how to do that.

\begin{question}
Does $\SRP+\mathsf{w}\DRP_{\IA}$ imply $\eDSR(\omega_1,S^{\omega_2}_\omega)$, or even $\DSRP$?
\end{question}

Regarding the separation of \SC-$\DSRP(\omega_2,\omega_2)$ from $\BSCFA(\omega_2)$, it would be interesting to know if this can be improved.

\begin{question}
Can one show that \SC-$\DSRP(\omega_2)+\CH$ does not imply $\BSCFA(\omega_2)$? That \SC-$\DSRP+\CH$ does not imply \SCFA? How about the $\infSC$-versions of these separations?
\end{question}

\bibliographystyle{abbrv}
\bibliography{references.bib}
\end{document}